\tikzset{%
element/.style={draw, shape=circle, fill=white, inner sep=1.4pt}
}
\DeclareSymbolFont{bbold}{U}{bbold}{m}{n}
\DeclareSymbolFontAlphabet{\mathbbold}{bbold}
\theoremstyle{plain}
\newtheorem{thm}{Theorem}[section]
\newtheorem{lem}[thm]{Lemma}
\newtheorem{cor}[thm]{Corollary}
\newtheorem{pro}[thm]{Proposition}
\theoremstyle{definition}
\newtheorem{remark}[thm]{Remark}
\newcommand{\up}[1]{\textup{#1}}
\newcommand{\bp}{\mathbf{p}}
\newcommand{\bq}{\mathbf{q}}
\newcommand{\bu}{\mathbf{u}}
\newcommand{\bv}{\mathbf{v}}
\newcommand{\bw}{\mathbf{w}}
\begin{document}

\title[The finite basis problem for ai-semirings of order four]
{The finite basis problem for additively idempotent semirings of order four, I}

\author{Miaomiao Ren}
\address{School of Mathematics, Northwest University, Xi'an, 710127, Shaanxi, P.R. China}
\email{miaomiaoren@yeah.net}

\author{Junyang Liu}
\address{School of Mathematics, Northwest University, Xi'an, 710127, Shaanxi, P.R. China}
\email{ljy201817706@163.com}

\author{Lingli Zeng}
\address{School of Mathematics, Northwest University, Xi'an, 710127, Shaanxi, P.R. China}
\email{llzeng@yeah.net}

\author{Menglong Chen}
\address{School of Mathematics, Northwest University, Xi'an, 710127, Shaanxi, P.R. China}
\email{cml469937292@163.com}

\subjclass[2010]{16Y60, 03C05, 08B05}
\keywords{semiring, variety, identity, finitely based, nonfinitely based}
\thanks{Miaomiao Ren is supported by National Natural Science Foundation of China (12371024).
Lingli Zeng, corresponding author, is supported by National Natural Science Foundation of China (11971383).
}

\begin{abstract}
We study the finite basis problem for $4$-element additively idempotent semirings whose
additive reducts are semilattices of height $1$. Up to isomorphism, there are $58$ such algebras.
We show that 49 of them are finitely based and the remaining ones are nonfinitely based.
\end{abstract}

\maketitle

\section{Introduction and preliminaries}
An \emph{additively idempotent semiring} (ai-semiring for short) is an algebra $(S, +, \cdot)$ with two binary operations $+$ and $\cdot$
such that the additive reduct $(S, +)$ is a semilattice, that is, a commutative idempotent semigroup,
the multiplicative reduct $(S, \cdot)$ is a semigroup and $S$ satisfies the distributive laws
\[
x(y+z)\approx xy+xz,\quad (x+y)z\approx xy+yz.
\]
Such an algebra is also called a \emph{semilattice-ordered semigroup}.
The class of ai-semirings contains
the Kleene semiring of regular languages \cite{con}, the max-plus algebra \cite{aei}
and the semiring of all binary relations on a set \cite{andmik}.
These algebras have played important roles in several branches of mathematics
such as algebraic geometry \cite{cc}, tropical geometry \cite{ms}, information science \cite{gl} and
theoretical computer science \cite{go}.

An ai-semiring is \emph{finitely based} if the set of its identities can be derived by some finite subset.
Otherwise, it is \emph{nonfinitely based}.
In the past two decades, the finite basis problem for ai-semirings has been intensively studied,
for example,
see \cite{dol07, dgv, gr, gmrz, gpz05, jrz, pas05, rjzl, rz16, rzs20, rzv, rzw, sr, shap23, vol21, wzr23, wrz23, yrzs, zrc}.
In particular, Pastijn et al. \cite{gpz05, pas05} showed that every
ai-semiring satisfying the identity $x^2\approx x$ is finitely based.
Ren et al. \cite{rz16, rzw} proved that every ai-semiring satisfying the identity
$x^3\approx x$ is finitely based.
Ren et al. \cite{rzs20} showed that every ai-semiring satisfying the identities
$x^n\approx x$ and $xy\approx yx$ is finitely based if $n-1$ is square-free.
Recently, Volkov et al. \cite{rzv} proved that if $n\geq 2$, then every ai-semiring
satisfying $x^n\approx x$ is finitely based if and only if $n=2$ or $3$.

\begin{table}[ht]
\caption{The Cayley tables of $S_7$} \label{tb2}
\begin{tabular}{c|ccc}
$+$&1&$a$&$\infty$\\
\hline
$1$&1&$\infty$&$\infty$\\
$a$&$\infty$&$a$&$\infty$\\
$\infty$&$\infty$&$\infty$&$\infty$\\
\end{tabular}\qquad
\begin{tabular}{c|ccc}
$\cdot$&1&$a$&$\infty$\\
\hline
$1$&1&$a$&$\infty$\\
$a$&$a$&$\infty$&$\infty$\\
$\infty$&$\infty$&$\infty$&$\infty$\\
\end{tabular}
\end{table}

On the other hand, several authors considered the finite basis problem for
ai-semirings of small order.
Dolinka \cite{dol07} provided the first example of a nonfinitely based finite ai-semiring,
which contains seven elements.
Shao and Ren \cite{sr} proved that every variety generated by some ai-semirings of order two is finitely based.
Zhao et al. \cite{zrc} showed that
with the possible exception of the ai-semiring $S_7$ (its Cayley tables are given by Table \ref{tb2}),
all ai-semirings of order three are finitely based.
Volkov \cite{vol21} answered the finite basis problem for the ai-semiring $B_2^1$ whose multiplicative reduct is the $6$-element Brandt monoid.
Jackson et al. \cite{jrz} presented some general results on the finite basis problem
for finite ai-semirings. As applications, they showed that
$S_7$ and $B_2^1$ are both nonfinitely based.
This completes the classification of ai-semirings of order three with respect to
the property of having/having no finite equational basis.
Recently, Gao et al. \cite{gmrz}, Shaprynski\v{\i} \cite{shap23} and Wu et al. \cite{wrz23}
initiated the study of the finite basis problem for ai-semirings of order four.

\setlength{\unitlength}{0.6cm}
\begin{figure}[ht]
\centering
\begin{subfigure}[b]{0.32\textwidth}
\centering
\begin{picture}(7,3)
\put(3.5,2.2){\line(0,-1){1}}
\put(3.5,2.2){\line(2,-1){2}}
\put(3.5,2.2){\line(-2,-1){2}}

\multiput(3.5,2.2)(0,-1){2}{\circle*{0.1}}
\multiput(3.5,2.2)(2,-1){2}{\circle*{0.1}}
\multiput(3.5,2.2)(-2,-1){2}{\circle*{0.1}}
\end{picture}
\caption{Type I} \label{Type I}
\end{subfigure}
\hfill
\begin{subfigure}[b]{0.32\textwidth}
\centering
\begin{picture}(7,3)
\put(3.5,2.2){\line(1,-1){1}}
\put(3.5,2.2){\line(-1,-1){1}}

\put(3.5,0.2){\line(1,1){1}}
\put(3.5,0.2){\line(-1,1){1}}

\multiput(3.5,2.2)(1,-1){2}{\circle*{0.1}}
\multiput(3.5,2.2)(-1,-1){2}{\circle*{0.1}}
\multiput(3.5,0.2)(1,1){2}{\circle*{0.1}}
\end{picture}
\caption{Type II} \label{Type II}
\end{subfigure}
\hfill
\begin{subfigure}[b]{0.32\textwidth}
\centering
\begin{picture}(7,3)
\put(3.5,2.2){\line(0,-1){1}}
\put(3.5,1.2){\line(1,-1){1}}
\put(3.5,1.2){\line(-1,-1){1}}

\multiput(3.5,2.2)(0,-1){2}{\circle*{0.1}}
\multiput(3.5,1.2)(1,-1){2}{\circle*{0.1}}
\multiput(3.5,1.2)(-1,-1){2}{\circle*{0.1}}
\end{picture}
\caption{Type III} \label{Type III}
\end{subfigure}

\begin{subfigure}[b]{0.495\textwidth}
\centering
\begin{picture}(6,3)
\put(3.3,2.2){\line(1,-1){1}}
\put(3.3,2.2){\line(-1,-1){1}}
\put(2.3,1.2){\line(0,-1){1}}

\multiput(3.3,2.2)(1,-1){2}{\circle*{0.1}}
\multiput(2.3,1.2)(0,-1){2}{\circle*{0.1}}
\end{picture}
\caption{Type IV} \label{Type IV}
\end{subfigure}
\hfill
\begin{subfigure}[b]{0.495\textwidth}
\centering
\begin{picture}(6,3)
\put(3.3,2.2){\line(0,-1){2}}
\multiput(3.3,2.2)(0,-1){1}{\circle*{0.1}}
\multiput(3.3,0.2)(0,-1){1}{\circle*{0.1}}
\multiput(3.3,1.54)(0,-1){1}{\circle*{0.1}}
\multiput(3.3,0.86)(0,-1){1}{\circle*{0.1}}
\end{picture}
\caption{Type V} \label{Type V}
\end{subfigure}
\caption{The additive orders of $4$-element ai-semirings}
\end{figure}

We follow this line of investigation and our aim is to
solve the finite basis problem for all ai-semirings of order four.
This program will be accomplished in a series of
papers, which we outline next.
The series will comprise five papers,
each addressing ai-semirings of order four characterized by a specific type
of additive semilattice.
Recall that the binary relation $\leq$ defined by
\[
a \leq b\Leftrightarrow a+b=b,
\]
is a partial order on every ai-semiring.
The present paper focuses on the finite basis problem
for $4$-element ai-semirings whose additive reducts are semilattices of height $1$ with respect to $\leq$ (see Figure \ref{Type I}).
The second paper \cite{yrzs} is devoted to studying the problem for
$4$-element ai-semirings whose additive reducts have the least element and two coatoms (see Figure \ref{Type II}).
The third paper will focus on the  problem for $4$-element ai-semirings
whose additive reducts have two minimal elements and one coatom (see Figure \ref{Type III}).
The fourth paper will study the  problem for $4$-element ai-semirings
whose additive reducts have two minimal elements and two coatoms (see Figure \ref{Type IV}).
The fifth paper will focus on the problem for $4$-element ai-semirings whose additive reducts are chains (see Figure \ref{Type V}).
Up to now, the third paper has been completed,
while there are still $4$ ai-semirings in the fourth paper and $31$ ai-semirings in the fifth paper
whose finite basis problem has not yet been solved.

A \emph{flat semiring} is an ai-semiring such that its multiplicative reduct has a zero element $0$
and $a+b=0$ for all distinct elements $a$ and $b$ of $S$.
So the additive reduct of a flat semiring is a semilattice of height $1$.
But an ai-semiring whose additive reduct is a semilattice of height $1$
is not necessarily a flat semiring. One can easily find many counterexamples in Table \ref{tb1} below.
It is easy to verify that the class of all flat semirings is closed under taking subalgebras and quotient algebras,
but is not closed under taking direct products. For example, the direct product of
two copies of the flat semiring $S_7$ is not a flat semiring.

Flat semirings have played an important role in the theory of varieties of ai-semirings (see \cite{jrz, rjzl, rzs20}).
Jackson et al. \cite[Lemma 2.2]{jrz} observed that a semigroup with the zero element $0$
becomes a flat semiring with the top element $0$ if and only if it is
$0$-cancellative, that is, $ab=ac\neq0$ implies $b=c$ and $ba=ca\neq0$ implies $b=c$
for all $a, b, c\in S$.
If $S$ is a cancellative semigroup, then $S^0$ is $0$-cancellative
and becomes a flat semiring, which is called the {\it flat extension} of $S$.
Let $G$ be a finite group. Jackson \cite[Theorem 7.3]{jac08} showed that
the flat extension of $G$ is finitely based if and only if all Sylow subgroups of $G$ are abelian.
Using the flat extensions of groups,
Ren et al. \cite{rjzl} provided an infinite series of minimal nonfinitely based ai-semiring varieties.

The following algebras form another important class of flat semirings.
Let $W$ be a nonempty subset of the free commutative semigroup $X^+_c$ over $X$,
and let $S_c(W)$ denote the set of all nonempty subwords of words in
$W$ together with a new symbol $0$. Define a binary operation $\cdot$ on $S_c(W)$ by the rule
\begin{equation*}
\bu\cdot \bv=
\begin{cases}
\bu\bv& \text{if }~\bu\bv\in S_c(W)\setminus \{0\}, \\
0& \text{otherwise.}
\end{cases}
\end{equation*}
Then $(S_c(W), \cdot)$ forms a semigroup with a zero element $0$.
It is easy to see that $(S_c(W), \cdot)$ is $0$-cancellative and so $S_c(W)$ becomes a flat semiring.
In particular, if $W$ consists of a single word $\bw$ we shall write $S_c(W)$ as $S_c(\bw)$.  If we allow the empty word in this construction, then the semigroup reduct is a monoid, and we use the notation $M_c(W)$.  Finally,
if we do the same construction on the free semigroup (resp., the free monoid) over $X$,
we obtain the flat semiring $S(W)$ (resp., $M(W)$).
Correspondingly, we have the notation $S(\bw)$ and $M(\bw)$.
It is easy to see that both $M_c(a)$ and $M(a)$ are isomorphic to $S_7$ if $a$ is a letter.

Let $X$ denote a countably infinite set of variables and $X^+$
the free semigroup over $X$. By distributivity, all ai-semiring terms over $X$ are finite sums of words in $X^+$.
An \emph{ai-semiring identity} over $X$ is an
expression of the form
\[
\bu\approx \bv,
\]
where $\bu$ and $\bv$ are ai-semiring terms over $X$.
From \cite[Theorem 2.5]{kp} we know that the ai-semiring
$(P_f(X^+), \cup, \cdot)$ consisting of all non-empty finite subsets of $X^+$
is free in the variety $\mathbf{AI}$ of all ai-semirings on $X$.
So we sometimes write
\[
\{\bu_i \mid 1 \leq i \leq k\}\approx \{\bv_j \mid 1 \leq j \leq \ell\}
\]
for the ai-semiring identity
\[
\bu_1+\cdots+\bu_k\approx \bv_1+\cdots+\bv_\ell.
\]
An \emph{ai-semiring substitution} is an endomorphism of $P_f(X^+)$.
Let $S$ be an ai-semiring and $\bu\approx \bv$ an ai-semiring identity.
We say that \emph{$S$ satisfies $\bu\approx \bv$} or \emph{$\bu\approx \bv$ holds in $S$}
if $\varphi(\bu)=\varphi(\bv)$ for all semiring homomorphisms $\varphi: P_f(X^+)\to S$.
Note that $\varphi$ is determined by $\{\varphi(x)\mid x\in X\}$,
since $P_f(X^+)$ is generated by $X$.

Suppose that $\Sigma$ is a set of ai-semiring identities which includes the identities that determine the variety of all ai-semirings.
Let $\bu \approx \bv$ be an ai-semiring identity such that
$\bu=\bu_1+\cdots+\bu_k$, $\bv=\bv_1+\cdots+\bv_\ell$, where $\bu_i$, $\bv_j\in X^+$, $1\leq i\leq k$, $1\leq j\leq \ell$.
Then it is easy to
see that the ai-semiring variety defined by $\bu\approx \bv$ is equal to the ai-semiring
variety defined by the simpler identities
$\bu\approx \bu+\bv_j, \bv\approx \bv+\bu_i, 1\leq i\leq k, 1\leq j\leq \ell$.
Therefore, to show that $\bu\approx \bv$ is derivable from $\Sigma$, we only need to show that
$\bu\approx \bu+\bv_j, \bv\approx \bv+\bu_i$ can be derived from $\Sigma$ for each $1\leq i\leq k$, $1\leq j\leq \ell$.
This technique will be repeatedly used in the sequel.

The following result about the equational logic of ai-semirings can be found in \cite[Lemma 2]{dol07}.
\begin{lem}\label{lem1}
Let $\Sigma$ be a set of ai-semiring identities and let $\bu\approx \bv$ be an ai-semiring identity.
Then $\bu\approx \bv$ is derivable from $\Sigma$ if and only if
there exist $T_1, T_2,\ldots, T_n \in P_f(X^+)$
such that $\bu=T_1$, $\bv=T_n$ and, for every $i=1, 2,\ldots, n-1$,
there are $A_i, B_i, P_i, Q_i, R_i\in P_f(X^+)$
and an ai-semiring substitution $\varphi_i: P_f(X^+) \to P_f(X^+)$ such that
\[
T_i=P_i\varphi_i(A_i)Q_i+R_i,~T_{i+1}=P_i\varphi_i(B_i)Q_i+R_i,
\]
where $A_i\approx B_i\in \Sigma$ or $B_i\approx A_i\in \Sigma$, $P_i$ and $Q_i$ may be the set $\{1\}$, $R_i$ may be the empty set.
\end{lem}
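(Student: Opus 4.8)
The statement is a Birkhoff-style completeness theorem for the equational logic of ai-semirings, transported to the concrete free algebra $P_f(X^+)$, so the plan is to realize the set of identities derivable from $\Sigma$ as the least fully invariant congruence of $P_f(X^+)$ containing $\Sigma$, and then to identify that congruence with the relation determined by the one-step rewriting in the statement. Concretely, I would define a binary relation $\sim$ on $P_f(X^+)$ by declaring $T \sim T'$ whenever there is a finite chain $T = T_1, T_2, \ldots, T_n = T'$ of the described form. The goal is then to prove that $\sim$ coincides with derivability from $\Sigma$; the lemma is exactly this coincidence. Here I use that $P_f(X^+)$ is the free ai-semiring, so the distributive, associative and additive laws are genuine equalities in $P_f(X^+)$ rather than steps to be derived.

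For the ``if'' direction (soundness) I would argue that each elementary step is a legitimate equational consequence of $\Sigma$. Given an axiom $A_i \approx B_i \in \Sigma$, applying the substitution $\varphi_i$ yields the derivable identity $\varphi_i(A_i) \approx \varphi_i(B_i)$, and compatibility of derivability with the operations (left multiplication by $P_i$, right multiplication by $Q_i$, and addition of $R_i$) promotes this to $P_i \varphi_i(A_i) Q_i + R_i \approx P_i \varphi_i(B_i) Q_i + R_i$, that is, to $T_i \approx T_{i+1}$. Chaining these instances by transitivity shows that $T \sim T'$ implies that $T \approx T'$ is derivable.

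The ``only if'' direction is where the real work lies, and the strategy is to verify that $\sim$ is itself a fully invariant congruence containing $\Sigma$; since derivability is the smallest such relation, it is then contained in $\sim$. That $\sim$ is an equivalence relation is immediate: reflexivity takes $n = 1$, symmetry uses that the step is allowed in either direction (the clause ``$A_i \approx B_i \in \Sigma$ or $B_i \approx A_i \in \Sigma$''), and transitivity concatenates chains. It contains $\Sigma$ by taking $P_i = Q_i = \{1\}$, $R_i$ empty and $\varphi_i$ the identity. The crux is compatibility with the operations and with substitutions, and here I would exploit that the step-form $P \cdot \square \cdot Q + R$ is precisely the shape that distributivity keeps stable: left-multiplying a step by $S$ converts it into a step with prefix $S P_i$ and remainder $S R_i$, via the distributive law $S(P_i \varphi_i(A_i) Q_i + R_i) = (S P_i)\varphi_i(A_i) Q_i + S R_i$; right multiplication is symmetric; and adding $S$ simply enlarges the remainder to $R_i + S$. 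Full invariance follows because applying an endomorphism $\psi$ to a step produces a step whose context is $\psi(P_i)\,\square\,\psi(Q_i) + \psi(R_i)$ and whose substitution is the composite $\psi \circ \varphi_i$, again of the required form.

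The main obstacle is thus the congruence verification just described: one must check, case by case, that the rigid normal form $P \cdot \square \cdot Q + R$ absorbs an arbitrary left factor, right factor, summand, and endomorphism without falling outside the allowed family. These reductions are routine once one notices that they are exactly instances of the distributive and associative laws already valid in $P_f(X^+)$, and that permitting $P_i, Q_i$ to equal $\{1\}$ and $R_i$ to be empty is precisely what makes the family closed under these manipulations. With $\sim$ shown to be a fully invariant congruence containing $\Sigma$, Birkhoff's completeness theorem gives derivability $\subseteq\; \sim$, and combined with soundness this yields the desired equivalence.
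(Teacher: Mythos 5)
The paper does not prove this lemma at all: it is quoted verbatim from Dolinka \cite[Lemma 2]{dol07}, so there is no internal proof to compare your argument against. Your proof is the standard Birkhoff--Mal'cev-style completeness argument relativized to the base variety $\mathbf{AI}$ (soundness of each one-step replacement, plus the verification that the chain relation is a fully invariant congruence on the relatively free algebra $P_f(X^+)$ containing $\Sigma$, hence contains the least such, which is derivability), and it is correct; the only point worth making explicit is the one you already flag, namely that the equalities $T_i=P_i\varphi_i(A_i)Q_i+R_i$ are equalities of elements of $P_f(X^+)$, i.e.\ hold modulo the $\mathbf{AI}$-axioms, which is exactly why working in the relatively free algebra rather than the absolutely free term algebra makes the closure computations (absorbing an outer factor via distributivity, an extra summand into $R_i$, and an endomorphism into $\psi\circ\varphi_i$) come out in the required rigid form.
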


Next, we introduce some notation that will be repeatedly used in the sequel.
Let $\bw$ be a word in $X^+$ and $x$ a letter in $X$. Then
\begin{itemize}
\item $h(\bw)$ denotes the first variable that occurs in $\bw$;

\item $t(\bw)$ denotes the last variable that occurs in $\bw$;

\item $c(\bw)$ denotes the set of variables that occur in $\bw$;

\item $\ell(\bw)$ denotes the length of $\bw$, that is,
the number of variables occurring in $\bw$ counting multiplicities;

\item $m(x, \bw)$ denotes the number of occurrences of $x$ in $\bw$;

\item $p(\bw)$ denotes the word obtained from $\bw$ by deleting its tail,
that is, $\bw=p(\bw)t(\bw)$;

\item $s(\bw)$ denotes the word obtained from $\bw$ by deleting its head,
that is, $\bw=h(\bw)s(\bw)$.
\end{itemize}

Let $\bu$ be an ai-semiring term such that $\bu=\bu_1+\bu_2+\cdots+\bu_n$, where
$\bu_i \in X^+$, $1\leq i \leq n$. Then
\begin{itemize}
\item $h(\bu)$ denotes the set $\{h(\bu_i) \mid 1\leq i \leq n\}$;

\item $t(\bu)$ denotes the set $\{t(\bu_i) \mid 1\leq i \leq n\}$;

\item $c(\bu)$ denotes the set of variables that occur in $\bu$ and so
\[
c(\bu)=\bigcup_{1\leq i \leq n} c(\bu_i).
\]
\end{itemize}

Up to isomorphism, there are exactly 6 ai-semirings of order 2 (see \cite{sr}), which are denoted by $L_2$, $R_2$, $M_2$, $D_2$, $N_2$ and $T_2$. We assume that the carrier set of each
of these semirings is $\{0,1\}$. Their Cayley tables for addition and multiplication are listed in Table~\ref{2}.
It is easy to see that $M_2$ is isomorphic to $M(1)$ and that $T_2$ is isomorphic to $S(a)$,
where $1$ denotes the empty word and $a$ is a letter.
The following result, which can be found in \cite[Lemma 1.1]{sr},
provides a solution of the equational problem for all ai-semirings of order two.
It will be used without explicit reference.

\begin{table}[htbp]
\caption{The 2-element ai-semirings}
\label{2}
\begin{tabular}{cccccc}
\hline
Semiring&   $+$ &  $\cdot$ & Semiring &   $+$ &  $\cdot$\\
\hline
$L_2$&
\begin{tabular}{cc}
                    0 & 1  \\
                    1 & 1  \\
\end{tabular}&
\begin{tabular}{cc}
                    0 & 0  \\
                    1 & 1  \\
\end{tabular}
&
$R_2$&   \begin{tabular}{cc}
                    0 & 1  \\
                    1 & 1  \\
\end{tabular}&
\begin{tabular}{cc}
                    0 & 1  \\
                    0 & 1  \\
\end{tabular}
\\
\hline

$M_2$&
\begin{tabular}{cc}
                    0 & 1  \\
                    1 & 1  \\
\end{tabular}&
\begin{tabular}{cc}
                    0 & 1  \\
                    1 & 1  \\
\end{tabular}
&
$D_2$&   \begin{tabular}{cc}
                    0 & 1  \\
                    1 & 1  \\
\end{tabular}&
\begin{tabular}{cc}
                    0 & 0  \\
                    0 & 1  \\
\end{tabular}
\\
\hline
$N_2$&
\begin{tabular}{cc}
                    0 & 1  \\
                    1 & 1  \\
\end{tabular}&
\begin{tabular}{cc}
                    0 & 0  \\
                    0 & 0  \\
\end{tabular}
&
$T_2$&   \begin{tabular}{cc}
                    0 & 1  \\
                    1 & 1  \\
\end{tabular}&
\begin{tabular}{cc}
                    1 & 1  \\
                    1 & 1  \\
\end{tabular}
\\
\hline
\end{tabular}
\end{table}

\begin{lem}\label{nlemma1}
Let $\bu\approx \bu+\bq$ be an ai-semiring identity such that
$\bu=\bu_1+\cdots+\bu_n$, where $\bu_i, \bq\in X^+$, $1\leq i \leq n$. Then
\begin{itemize}
\item[$(1)$] $\bu\approx \bu+\bq$ holds in $L_2$ if and only if $h(\bq)=h(\bu_i)$ for some $\bu_i \in \bu$.

\item[$(2)$] $\bu\approx \bu+\bq$ holds in $R_2$ if and only if $t(\bq)=t(\bu_i)$ for some $\bu_i \in \bu$.

\item[$(3)$] $\bu\approx \bu+\bq$ holds in $M_2$ if and only if $c(\bq) \subseteq \bigcup_{i=1}^n c(\bu_i)$.

\item[$(4)$] $\bu\approx \bu+\bq$ holds in $D_2$ if and only if $c(\bq)\supseteq c(\bu_i)$ for some $\bu_i \in \bu$.

\item[$(5)$] $\bu\approx \bu+\bq$ holds in $N_2$ if and only if $\ell(\bq)\geq 2$ or $\ell(\bq)=1$,
$\bq=\bu_i$ for some $\bu_i \in \bu$.

\item[$(6)$] $\bu\approx \bu+\bq$ holds in $T_2$ if and only if $\ell(\bu_i)\geq 2$ for some $\bu_i \in \bu$
or $\ell(\bu_i)=1$ for all $\bu_i \in \bu$, $\bq=\bu_i$ for some $\bu_i \in \bu$.
\end{itemize}
\end{lem}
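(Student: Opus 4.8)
The plan is to reduce all six statements to a single observation about the additive reduct and then read off the multiplicative behaviour semiring by semiring. In each of $L_2, R_2, M_2, D_2, N_2, T_2$ the addition table is that of the two-element semilattice on $\{0,1\}$ with $0\le 1$ and top element $1$; hence for any homomorphism $\varphi$ one has $\varphi(\bu+\bq)=\varphi(\bu)\vee\varphi(\bq)$, and the identity $\bu\approx\bu+\bq$ holds in the semiring precisely when $\varphi(\bq)\le\varphi(\bu)$ for every $\varphi$. Since $\varphi(\bu)=\varphi(\bu_1)\vee\cdots\vee\varphi(\bu_n)$ and the only way to violate $\varphi(\bq)\le\varphi(\bu)$ is to have $\varphi(\bq)=1$ while every $\varphi(\bu_i)=0$, the identity holds if and only if no assignment $\varphi$ simultaneously gives $\varphi(\bq)=1$ and $\varphi(\bu_i)=0$ for all $i$. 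Thus everything comes down to describing, in each semiring, exactly which assignments make a single word evaluate to $1$.

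First I would record this word-evaluation rule in each case directly from the multiplication table. In $L_2$ the product is left projection, so $\varphi(\bw)=\varphi(h(\bw))$; dually in $R_2$ the product is right projection and $\varphi(\bw)=\varphi(t(\bw))$. In $M_2$ multiplication coincides with the join, so $\varphi(\bw)=\bigvee_{x\in c(\bw)}\varphi(x)$, which is $1$ iff some variable of $\bw$ is sent to $1$; in $D_2$ multiplication is the meet, so $\varphi(\bw)$ is $1$ iff every variable of $\bw$ is sent to $1$. In $N_2$ every product equals $0$, so $\varphi(\bw)=1$ iff $\bw$ is a single variable assigned $1$; in $T_2$ every product equals $1$, so $\varphi(\bw)=1$ iff $\ell(\bw)\ge 2$, or $\bw$ is a single variable assigned $1$.

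With these rules in hand, each of the equivalences for $L_2,R_2,M_2,D_2$ is proved by the same two-sided argument. For sufficiency one assumes the stated combinatorial condition and checks that whenever $\varphi(\bq)=1$ the evaluation rule forces some $\varphi(\bu_i)=1$: for instance in $L_2$, if $h(\bq)=h(\bu_i)$ then $\varphi(\bq)=1$ gives $\varphi(h(\bu_i))=\varphi(h(\bq))=1$, hence $\varphi(\bu_i)=1$; the cases of $R_2$, $M_2$, $D_2$ are identical with $h$ replaced by $t$, by $c(\bq)\subseteq\bigcup c(\bu_i)$, and by $c(\bu_i)\subseteq c(\bq)$ respectively. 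For necessity one assumes the condition fails and builds an explicit separating assignment: send to $1$ exactly the variable $h(\bq)$ (resp.\ $t(\bq)$, resp.\ a variable in $c(\bq)\setminus\bigcup c(\bu_i)$) and everything else to $0$ for $L_2,R_2,M_2$; and send $c(\bq)$ to $1$ and the rest to $0$ for $D_2$, so that each $\bu_i$ has a variable outside $c(\bq)$ and thus evaluates to $0$. In every case $\varphi(\bq)=1$ and $\varphi(\bu)=0$, so the identity fails.

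The part that needs the most care is $N_2$ and $T_2$, where word length governs the evaluation and so is the only genuinely nonroutine step. For $N_2$ the equality $\varphi(\bq)=1$ can only occur when $\ell(\bq)=1$, so the identity automatically holds when $\ell(\bq)\ge 2$; and when $\bq$ is a single variable it holds iff some summand equals $\bq$, because under the assignment sending $\bq$'s variable to $1$ and all others to $0$ the only summand that can evaluate to $1$ is one equal to $\bq$. For $T_2$ I would split on whether some $\bu_i$ has length at least $2$: if so, $\varphi(\bu)=1$ identically and the identity holds for every $\bq$; if instead every $\bu_i$ is a single letter, then $\varphi(\bu)$ is the join of those letters' values, and the identity holds iff $\varphi(\bq)=1$ always forces this join to be $1$, which, using the all-zero assignment together with the assignments sending a single variable to $1$, is equivalent to $\bq$ being one of the single-letter summands. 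Assembling these cases yields the two disjuncts in the statement.
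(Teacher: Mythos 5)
Your proof is correct and complete: the reduction of $\bu\approx\bu+\bq$ to ``no assignment makes $\varphi(\bq)=1$ while every $\varphi(\bu_i)=0$'', the word-evaluation rules read off from the six multiplication tables, and the explicit separating assignments in the necessity directions all check out, including the two length-sensitive cases $N_2$ and $T_2$. Note that the paper itself offers no proof to compare against --- it quotes this lemma from \cite[Lemma~1.1]{sr} and uses it without further justification --- so your direct verification is exactly the standard argument one would expect to find in that source.
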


\begin{table}[htbp]
\caption{Some 3-element ai-semirings}\label{3-element ai-semirings}
\label{tb:classification}
\begin{tabular}{cccc}
\hline
Semiring&   $+$ &  $\cdot$ & Equational basis\\
\hline
$S_2$&   \begin{tabular}{ccc}
                    1 & 1 & 1  \\
                    1 & 2 & 1\\
                    1 & 1 & 3
              \end{tabular}&
\begin{tabular}{ccc}
                    1 & 1 & 1  \\
                    1 & 1 & 1\\
                    1 & 1 & 2
              \end{tabular}

&
\begin{tabular}{cc}
                    & $x_1x_2x_3\approx y_1y_2y_3$, $x+x^2\approx x^3$,  \\
                    & $x^2+y^2\approx xy$, $x^3+y \approx x^3$
\end{tabular}

\\
\hline
$S_4$&   \begin{tabular}{ccc}
                    1 & 1 & 1  \\
                    1 & 2 & 1\\
                    1 & 1 & 3
              \end{tabular}&
\begin{tabular}{ccc}
                    1 & 1 & 1  \\
                    1 & 1 & 1\\
                    1 & 2 & 3
              \end{tabular}
&
\begin{tabular}{cc}
                    & $xy\approx x^2y$, $xyz\approx yxz$,  \\
                    & $x+y^2\approx xy^2$, $x+yz \approx yx+yz$
\end{tabular}
\\
\hline

$S_5$&   \begin{tabular}{ccc}
                    1 & 1 & 1  \\
                    1 & 2 & 1\\
                    1 & 1 & 3
              \end{tabular}&
\begin{tabular}{ccc}
                    1 & 1 & 1  \\
                    1 & 1 & 1\\
                    3 & 3 & 3
              \end{tabular}
&
\begin{tabular}{cc}
                    & $xy\approx xz$, $xy\approx xy+x$,  \\
                    & $x+yz\approx x^2+yz$
\end{tabular}
\\
\hline
$S_6$&   \begin{tabular}{ccc}
                    1 & 1 & 1  \\
                    1 & 2 & 1\\
                    1 & 1 & 3
              \end{tabular}&
\begin{tabular}{ccc}
                    1 & 1 & 1  \\
                    1 & 1 & 2\\
                    1 & 1 & 3
              \end{tabular}
&
\begin{tabular}{cc}
                    & $xy\approx xy^2$, $xyz\approx xzy$,  \\
                    & $x+y^2\approx x^2y$, $x+yz \approx xz+yz$
\end{tabular}
\\
\hline
$S_{10}$&   \begin{tabular}{ccc}
                    1 & 1 & 1  \\
                    1 & 2 & 1\\
                    1 & 1 & 3
              \end{tabular}&
\begin{tabular}{ccc}
                    1 & 1 & 1  \\
                    1 & 2 & 3\\
                    1 & 3 & 2
              \end{tabular}
&$x^3\approx x$, $xy\approx yx$, $x^2+y^2\approx x^2y^2$\\
\hline
\end{tabular}
\end{table}

\begin{table}[htbp]
\caption{The multiplicative tables of $4$-element ai-semirings whose additive reduct are semilattices of height $1$} \label{tb1}
\begin{tabular}{cccccc}
\hline
Semiring & $\cdot$ & Semiring & $\cdot$ & Semiring & $\cdot$\\
\hline
$S_{(4, 1)}$
&
\begin{tabular}{cccc}
1 & 1 & 1 & 1\\
1 & 1 & 1 & 1 \\
1 & 1 & 1 & 1 \\
1 & 1 & 1 & 1 \\
\end{tabular}
&
$S_{(4, 2)}$
&
\begin{tabular}{cccc}
1 & 1 & 1 & 1\\
1 & 1 & 1 & 1 \\
1 & 1 & 1 & 1 \\
1 & 1 & 1 & 2 \\
\end{tabular}
&
$S_{(4, 3)}$
&
\begin{tabular}{cccc}
1 & 1 & 1 & 1\\
1 & 1 & 1 & 1 \\
1 & 1 & 1 & 1 \\
1 & 1 & 1 & 4 \\
\end{tabular}\\
\hline

$S_{(4, 4)}$
&
\begin{tabular}{cccc}
1 & 1 & 1 & 1\\
1 & 1 & 1 & 1 \\
1 & 1 & 1 & 2 \\
1 & 1 & 1 & 1 \\
\end{tabular}
&
$S_{(4, 5)}$
&
\begin{tabular}{cccc}
1 & 1 & 1 & 1\\
1 & 1 & 1 & 1 \\
1 & 1 & 1 & 3 \\
1 & 1 & 1 & 4 \\
\end{tabular}
&
$S_{(4, 6)}$
&
\begin{tabular}{cccc}
1 & 1 & 1 & 1\\
1 & 1 & 1 & 2 \\
1 & 1 & 1 & 3 \\
1 & 1 & 1 & 4 \\
\end{tabular}\\
\hline

$S_{(4, 7)}$
&
\begin{tabular}{cccc}
1 & 1 & 1 & 4\\
1 & 1 & 1 & 4 \\
1 & 1 & 1 & 4 \\
1 & 1 & 1 & 4 \\
\end{tabular}
&
$S_{(4, 8)}$
&
\begin{tabular}{cccc}
1 & 1 & 1 & 1\\
1 & 1 & 1 & 1 \\
1 & 1 & 1 & 2 \\
1 & 1 & 2 & 1 \\
\end{tabular}
&
$S_{(4, 9)}$
&
\begin{tabular}{cccc}
1 & 1 & 1 & 1\\
1 & 1 & 1 & 1 \\
1 & 1 & 1 & 2 \\
1 & 1 & 2 & 3 \\
\end{tabular}\\
\hline

$S_{(4, 10)}$
&
\begin{tabular}{cccc}
1 & 1 & 1 & 1\\
1 & 1 & 1 & 1 \\
1 & 1 & 1 & 1 \\
1 & 1 & 3 & 4 \\
\end{tabular}
&
$S_{(4, 11)}$
&
\begin{tabular}{cccc}
1 & 1 & 1 & 1\\
1 & 1 & 1 & 1 \\
1 & 1 & 1 & 3 \\
1 & 1 & 3 & 4 \\
\end{tabular}
&
$S_{(4, 12)}$
&
\begin{tabular}{cccc}
1 & 1 & 1 & 1\\
1 & 1 & 1 & 2 \\
1 & 1 & 1 & 1 \\
1 & 1 & 3 & 4 \\
\end{tabular}\\
\hline
$S_{(4, 13)}$
&
\begin{tabular}{cccc}
1 & 1 & 1 & 1\\
1 & 1 & 1 & 2 \\
1 & 1 & 1 & 3 \\
1 & 1 & 3 & 4 \\
\end{tabular}
&
$S_{(4, 14)}$
&
\begin{tabular}{cccc}
1 & 1 & 1 & 1\\
1 & 1 & 1 & 1 \\
1 & 1 & 2 & 1 \\
1 & 1 & 1 & 2 \\
\end{tabular}
&
$S_{(4, 15)}$
&
\begin{tabular}{cccc}
1 & 1 & 1 & 1\\
1 & 1 & 1 & 1 \\
1 & 1 & 2 & 1 \\
1 & 1 & 1 & 4 \\
\end{tabular}\\
\hline

$S_{(4, 16)}$
&
\begin{tabular}{cccc}
1 & 1 & 1 & 4\\
1 & 1 & 1 & 4 \\
1 & 1 & 2 & 4 \\
1 & 1 & 1 & 4 \\
\end{tabular}
&
$S_{(4, 17)}$
&
\begin{tabular}{cccc}
1 & 1 & 1 & 1\\
1 & 1 & 1 & 1 \\
1 & 1 & 3 & 1 \\
1 & 1 & 1 & 4 \\
\end{tabular}
&
$S_{(4, 18)}$
&
\begin{tabular}{cccc}
1 & 1 & 1 & 1\\
1 & 1 & 1 & 2 \\
1 & 1 & 3 & 1 \\
1 & 1 & 1 & 4 \\
\end{tabular}\\
\hline

$S_{(4, 19)}$
&
\begin{tabular}{cccc}
1 & 1 & 1 & 4\\
1 & 1 & 1 & 4 \\
1 & 1 & 3 & 4 \\
1 & 1 & 1 & 4 \\
\end{tabular}
&
$S_{(4, 20)}$
&
\begin{tabular}{cccc}
1 & 1 & 1 & 1\\
1 & 1 & 1 & 1 \\
1 & 1 & 3 & 4 \\
1 & 1 & 4 & 3 \\
\end{tabular}
&
$S_{(4, 21)}$
&
\begin{tabular}{cccc}
1 & 1 & 1 & 4\\
1 & 1 & 2 & 4 \\
1 & 1 & 3 & 4 \\
1 & 1 & 1 & 4 \\
\end{tabular}\\
\hline

$S_{(4, 22)}$
&
\begin{tabular}{cccc}
1 & 1 & 3 & 4\\
1 & 1 & 3 & 4 \\
1 & 1 & 3 & 4 \\
1 & 1 & 3 & 4 \\
\end{tabular}
&
$S_{(4, 23)}$
&
\begin{tabular}{cccc}
1 & 1 & 1 & 1\\
1 & 1 & 1 & 1 \\
1 & 1 & 1 & 1 \\
1 & 2 & 3 & 4 \\
\end{tabular}
&
$S_{(4, 24)}$
&
\begin{tabular}{cccc}
1 & 1 & 1 & 1\\
1 & 1 & 1 & 1 \\
1 & 1 & 1 & 3 \\
1 & 2 & 3 & 4 \\
\end{tabular}\\
\hline

$S_{(4, 25)}$
&
\begin{tabular}{cccc}
1 & 1 & 1 & 1\\
1 & 1 & 1 & 2 \\
1 & 1 & 1 & 3 \\
1 & 2 & 3 & 4 \\
\end{tabular}
&
$S_{(4, 26)}$
&
\begin{tabular}{cccc}
1 & 1 & 1 & 1\\
1 & 1 & 1 & 2 \\
1 & 1 & 2 & 3 \\
1 & 2 & 3 & 4 \\
\end{tabular}
&
$S_{(4, 27)}$
&
\begin{tabular}{cccc}
1 & 1 & 1 & 1\\
1 & 1 & 1 & 1 \\
1 & 1 & 3 & 1 \\
1 & 2 & 1 & 4 \\
\end{tabular}\\
\hline

$S_{(4, 28)}$
&
\begin{tabular}{cccc}
1 & 1 & 1 & 1\\
1 & 1 & 1 & 2 \\
1 & 1 & 3 & 1 \\
1 & 2 & 1 & 4 \\
\end{tabular}
&
$S_{(4, 29)}$
&
\begin{tabular}{cccc}
1 & 1 & 1 & 1\\
1 & 1 & 2 & 1 \\
1 & 1 & 3 & 1 \\
1 & 2 & 1 & 4 \\
\end{tabular}
&
$S_{(4, 30)}$
&
\begin{tabular}{cccc}
1 & 1 & 3 & 1\\
1 & 1 & 3 & 1 \\
1 & 1 & 3 & 1 \\
1 & 2 & 3 & 4 \\
\end{tabular}\\
\hline

$S_{(4, 31)}$
&
\begin{tabular}{cccc}
1 & 1 & 3 & 1\\
1 & 1 & 3 & 2 \\
1 & 1 & 3 & 1 \\
1 & 2 & 3 & 4 \\
\end{tabular}
&
$S_{(4, 32)}$
&
\begin{tabular}{cccc}
1 & 1 & 1 & 1\\
1 & 2 & 1 & 1 \\
1 & 1 & 3 & 1 \\
1 & 1 & 1 & 4 \\
\end{tabular}
&
$S_{(4, 33)}$
&
\begin{tabular}{cccc}
1 & 1 & 1 & 4 \\
1 & 2 & 1 & 4 \\
1 & 1 & 3 & 4 \\
1 & 1 & 1 & 4 \\
\end{tabular}\\
\hline
\end{tabular}
\end{table}

\begin{table}[htbp]
\begin{tabular}{cccccc}
\hline
$S_{(4, 34)}$
&
\begin{tabular}{cccc}
1 & 1 & 1 & 1\\
1 & 2 & 1 & 1 \\
1 & 1 & 3 & 4 \\
1 & 1 & 4 & 3 \\
\end{tabular}
&
$S_{(4, 35)}$
&
\begin{tabular}{cccc}
1 & 1 & 3 & 4 \\
1 & 2 & 3 & 4 \\
1 & 1 & 3 & 4 \\
1 & 1 & 3 & 4 \\
\end{tabular}
&
$S_{(4, 36)}$
&
\begin{tabular}{cccc}
1 & 1 & 3 & 1 \\
1 & 2 & 3 & 4 \\
1 & 1 & 3 & 1 \\
1 & 4 & 3 & 2 \\
\end{tabular}\\
\hline
$S_{(4, 37)}$
&
\begin{tabular}{cccc}
1 & 1 & 1 & 1\\
1 & 2 & 3 & 4 \\
1 & 3 & 4 & 2 \\
1 & 4 & 2 & 3 \\
\end{tabular}
&
$S_{(4, 38)}$
&
\begin{tabular}{cccc}
1 & 2 & 3 & 4 \\
1 & 2 & 3 & 4 \\
1 & 2 & 3 & 4 \\
1 & 2 & 3 & 4 \\
\end{tabular}
&
$S_{(4, 39)}$
&
\begin{tabular}{cccc}
1 & 1 & 1 & 1 \\
1 & 1 & 1 & 1 \\
1 & 1 & 1 & 1 \\
4 & 4 & 4 & 4 \\
\end{tabular}\\
\hline

$S_{(4, 40)}$
&
\begin{tabular}{cccc}
1 & 1 & 1 & 4\\
1 & 1 & 1 & 4 \\
1 & 1 & 1 & 4 \\
4 & 4 & 4 & 4 \\
\end{tabular}
&
$S_{(4, 41)}$
&
\begin{tabular}{cccc}
1 & 1 & 1 & 1 \\
1 & 1 & 1 & 1 \\
1 & 1 & 2 & 1 \\
4 & 4 & 4 & 4 \\
\end{tabular}
&
$S_{(4, 42)}$
&
\begin{tabular}{cccc}
1 & 1 & 1 & 4 \\
1 & 1 & 1 & 4 \\
1 & 1 & 2 & 4 \\
4 & 4 & 4 & 4 \\
\end{tabular}\\
\hline

$S_{(4, 43)}$
&
\begin{tabular}{cccc}
1 & 1 & 1 & 1\\
1 & 1 & 1 & 1 \\
1 & 1 & 3 & 1 \\
4 & 4 & 4 & 4 \\
\end{tabular}
&
$S_{(4, 44)}$
&
\begin{tabular}{cccc}
1 & 1 & 1 & 4 \\
1 & 1 & 1 & 4 \\
1 & 1 & 3 & 4 \\
4 & 4 & 4 & 4 \\
\end{tabular}
&
$S_{(4, 45)}$
&
\begin{tabular}{cccc}
1 & 1 & 1 & 1 \\
1 & 1 & 2 & 1 \\
1 & 1 & 3 & 1 \\
4 & 4 & 4 & 4 \\
\end{tabular}\\
\hline

$S_{(4, 46)}$
&
\begin{tabular}{cccc}
1 & 1 & 1 & 4\\
1 & 1 & 2 & 4 \\
1 & 1 & 3 & 4 \\
4 & 4 & 4 & 4 \\
\end{tabular}
&
$S_{(4, 47)}$
&
\begin{tabular}{cccc}
1 & 1 & 1 & 1 \\
1 & 1 & 1 & 1 \\
1 & 2 & 3 & 1 \\
4 & 4 & 4 & 4 \\
\end{tabular}
&
$S_{(4, 48)}$
&
\begin{tabular}{cccc}
1 & 1 & 1 & 4 \\
1 & 1 & 1 & 4 \\
1 & 2 & 3 & 4 \\
4 & 4 & 4 & 4 \\
\end{tabular}\\
\hline

$S_{(4, 49)}$
&
\begin{tabular}{cccc}
1 & 1 & 1 & 1\\
1 & 1 & 2 & 1 \\
1 & 2 & 3 & 1 \\
4 & 4 & 4 & 4 \\
\end{tabular}
&
$S_{(4, 50)}$
&
\begin{tabular}{cccc}
1 & 1 & 1 & 4 \\
1 & 1 & 2 & 4 \\
1 & 2 & 3 & 4 \\
4 & 4 & 4 & 4 \\
\end{tabular}
&
$S_{(4, 51)}$
&
\begin{tabular}{cccc}
1 & 1 & 1 & 1 \\
1 & 2 & 1 & 1 \\
1 & 1 & 3 & 1 \\
4 & 4 & 4 & 4 \\
\end{tabular}\\
\hline

$S_{(4, 52)}$
&
\begin{tabular}{cccc}
1 & 1 & 1 & 4\\
1 & 2 & 1 & 4 \\
1 & 1 & 3 & 4 \\
4 & 4 & 4 & 4 \\
\end{tabular}
&
$S_{(4, 53)}$
&
\begin{tabular}{cccc}
1 & 1 & 1 & 1 \\
1 & 2 & 3 & 1 \\
1 & 3 & 2 & 1 \\
4 & 4 & 4 & 4 \\
\end{tabular}
&
$S_{(4, 54)}$
&
\begin{tabular}{cccc}
1 & 1 & 1 & 4 \\
1 & 2 & 3 & 4 \\
1 & 3 & 2 & 4 \\
4 & 4 & 4 & 4 \\
\end{tabular}\\
\hline

$S_{(4, 55)}$
&
\begin{tabular}{cccc}
1 & 1 & 1 & 1\\
1 & 1 & 1 & 1 \\
3 & 3 & 3 & 3 \\
4 & 4 & 4 & 4 \\
\end{tabular}
&
$S_{(4, 56)}$
&
\begin{tabular}{cccc}
1 & 1 & 1 & 1 \\
1 & 2 & 1 & 1 \\
3 & 3 & 3 & 3 \\
4 & 4 & 4 & 4 \\
\end{tabular}
&
$S_{(4, 57)}$
&
\begin{tabular}{cccc}
1 & 1 & 1 & 4 \\
2 & 2 & 2 & 2 \\
3 & 3 & 3 & 3 \\
4 & 4 & 4 & 4 \\
\end{tabular}\\
\hline

$S_{(4, 58)}$
&
\begin{tabular}{cccc}
2 & 2 & 2 & 2 \\
2 & 2 & 2 & 2 \\
2 & 2 & 2 & 2 \\
2 & 2 & 2 & 2 \\
\end{tabular}\\
\hline
\end{tabular}
\end{table}

Up to isomorphism, there are 61 ai-semirings of order three,
which are denoted by $S_i$, $1 \leq i \leq 61$.
For detailed information on these semirings, the readers can refer to \cite{zrc}.
Table \ref{3-element ai-semirings} lists Cayley tables and equational basis for
$S_2$, $S_4$, $S_5$, $S_6$ and $S_{10}$,
which will be heavily used in the present paper.
 We assume that the carrier set of each
of these semirings is $\{1, 2, 3\}$.
Up to isomorphism,
there are exactly 866 ai-semirings of order four\footnote{We wrote a program and obtained this result.
Dataset link: https://github.com/cml-daishu/n-element-ai-semirings-solver
The readers can also refer to the program of James Mitchell.
Dataset link: https://github.com/james-d-mitchell/ai-semirings
}.
The additive reducts of $58$ of them are semilattices of height $1$.
These algebras are denoted by $S_{(4, k)}$, $1\leq k\leq 58$.
We assume that the carrier set of each of these semirings is $\{1, 2, 3, 4\}$, where
$1$ is the greatest element of the additive reduct of each semiring.
Their Cayley tables for multiplication are listed in Table \ref{tb1},
and their Cayley tables for addition are determined by Figure \ref{figure1}.
\setlength{\unitlength}{1cm}
\setlength{\abovecaptionskip}{5pt}
\begin{figure}[ht]
\begin{picture}(35,1.6)
\put(6.5,1.6){\line(0,-1){1}}
\put(6.5,1.6){\line(2,-1){2}}
\put(6.5,1.6){\line(-2,-1){2}}

\multiput(6.5,1.6)(0,-1){2}{\circle*{0.1}}
\multiput(6.5,1.6)(2,-1){2}{\circle*{0.1}}
\multiput(6.5,1.6)(-2,-1){2}{\circle*{0.1}}

\put(6.5,1.9){\makebox(0,0){$1$}}
\put(6.5,0.3){\makebox(0,0){$3$}}
\put(4.5,0.3){\makebox(0,0){$2$}}
\put(8.5,0.3){\makebox(0,0){$4$}}
\end{picture}
\caption{The additive order of $S_{(4, k)}$, $1\leq k\leq 58$} \label{figure1}
\end{figure}

The following theorem is the main result of this paper.
\begin{thm}\label{main}
\hspace*{\fill}
\begin{itemize}
\item[$(1)$] The following ai-semirings are finitely based\up:
$S_{(4, 1)}$, $S_{(4, 2)}$, $S_{(4, 3)}$,
$S_{(4, 4)}$, $S_{(4, 5)}$, $S_{(4, 6)}$, $S_{(4, 7)}$, $S_{(4, 8)}$, $S_{(4, 9)}$, $S_{(4, 10)}$,
$S_{(4, 12)}$,
$S_{(4, 14)}$, $S_{(4, 15)}$, $S_{(4, 16)}$, $S_{(4, 17)}$, $S_{(4, 18)}$, $S_{(4, 19)}$, $S_{(4, 20)}$,
$S_{(4, 21)}$, $S_{(4, 22)}$, $S_{(4, 23)}$,
$S_{(4, 27)}$, $S_{(4, 29)}$, $S_{(4, 30)}$,
$S_{(4, 32)}$, $S_{(4, 33)}$, $S_{(4, 34)}$,
$S_{(4, 35)}$, $S_{(4, 36)}$, $S_{(4, 37)}$,
$S_{(4, 38)}$, $S_{(4, 39)}$, $S_{(4, 40)}$,
$S_{(4, 41)}$, $S_{(4, 42)}$, $S_{(4, 43)}$,
$S_{(4, 44)}$, $S_{(4, 45)}$, $S_{(4, 46)}$,
$S_{(4, 47)}$, $S_{(4, 48)}$, $S_{(4, 51)}$,
$S_{(4, 52)}$, $S_{(4, 53)}$, $S_{(4, 54)}$,
$S_{(4, 55)}$, $S_{(4, 56)}$, $S_{(4, 57)}$ and $S_{(4, 58)}$.

\item[$(2)$] The following ai-semirings are nonfinitely based\up:
$S_{(4, 11)}$, $S_{(4, 13)}$, $S_{(4, 24)}$, $S_{(4, 25)}$,
$S_{(4, 26)}$, $S_{(4, 28)}$, $S_{(4, 31)}$, $S_{(4, 49)}$ and $S_{(4, 50)}$.
\end{itemize}
\end{thm}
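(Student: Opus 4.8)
The plan is to split the theorem into its two halves and, within each half, to group the semirings into families sharing a governing identity or a common structural description, rather than attacking all $58$ of them one at a time.

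For part (1), the first step is to read off from the multiplication tables in Table \ref{tb1} which power identity each $S_{(4, k)}$ satisfies. Those satisfying $x^2\approx x$ are finitely based by Pastijn et al. \cite{gpz05, pas05}, and those satisfying $x^3\approx x$ are finitely based by Ren et al. \cite{rz16, rzw}; these two theorems should already dispose of a substantial number of the $49$ algebras. For each remaining finitely based semiring I would propose an explicit candidate basis $\Sigma_k$ --- the ai-semiring axioms together with a short list of identities extracted from the table (typically a multiplicative relation determining squares, together with a few absorption identities governing the top element $1$) --- and then prove completeness. By the reduction recorded in the preliminaries, it suffices to derive every simple identity $\bu\approx\bu+\bq$ with $\bq\in X^+$ that holds in $S_{(4, k)}$; here Lemma \ref{lem1} supplies the term-rewriting machinery, while Lemma \ref{nlemma1} supplies the constraints, since $S_{(4, k)}$ has several of the six $2$-element semirings among its subquotients and $\bu\approx\bu+\bq$ must hold in each of them, forcing the head, tail, and content of $\bq$ to match those of some summand $\bu_i$ and thereby reducing $\bq$ to a word already present in $\bu$.

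For part (2), the strategy is to recognize each of the nine semirings as a flat semiring carrying a known nonfinite-basis mechanism. Since the additive reduct has height $1$, each $S_{(4, k)}$ whose multiplicative reduct is $0$-cancellative is a flat semiring, and many will coincide with one of the concrete flat semirings $S(\bw)$, $S_c(\bw)$, $M(\bw)$, or with a flat extension of a small semigroup. The plan is then to invoke the general nonfinite-basis criteria of Jackson et al. \cite{jrz} (the same results that yield the nonfinite basis of $S_7$ and $B_2^1$), or, where those do not apply verbatim, to run the syntactic method directly: exhibit an infinite sequence of identities $\sigma_n$ valid in $S_{(4, k)}$ together with a sequence of auxiliary algebras $A_n$ such that each $A_n$ satisfies every identity of $S_{(4, k)}$ up to a growing length bound yet fails $\sigma_n$, so that no finite subset of the identities of $S_{(4, k)}$ can derive all the $\sigma_n$.

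The routine part is the finitely based half: once the correct basis $\Sigma_k$ is guessed, completeness is a finite (if tedious) rewriting check organized by Lemma \ref{lem1} and the $2$-element constraints of Lemma \ref{nlemma1}. The main obstacle is the nonfinitely based half. Guessing the separating sequence $\{\sigma_n\}$ and, more importantly, producing the witness algebras $A_n$ (or verifying that a known criterion genuinely applies to the specific tables) is delicate: the identities must be valid in $S_{(4, k)}$, must grow in complexity, and the witnesses must satisfy all short identities of $S_{(4, k)}$ while failing $\sigma_n$. Matching each of the nine tables to the right mechanism --- and confirming that the split into $49$ finitely based and $9$ nonfinitely based algebras is sharp, i.e.\ that none of the claimed finitely based semirings secretly admits such a sequence --- is where the real work lies.
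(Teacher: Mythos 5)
Your high-level strategy coincides with the paper's: part (1) is handled by a mix of known general results (power identities, the order-two classification) plus explicit finite bases with syntactic completeness proofs, and part (2) by the flat-semiring machinery of Jackson et al.\ \cite{jrz}. However, as written the proposal is an outline that omits the decisive steps. For part (2) the paper does not need your fallback of constructing witness algebras $A_n$; the whole argument is that each of the nine semirings contains a copy of $S_7$ \emph{and} has its noncyclic elements forming an order ideal, whence \cite[Theorem 4.9]{jrz} applies because $S_c(a_1\cdots a_k)\in\mathsf{V}(S_7)$ for all $k$. Identifying and verifying those two concrete conditions is the entire content of that half, and your proposal never names them. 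Note also that not all nine are flat semirings (e.g.\ in $S_{(4,49)}$ and $S_{(4,50)}$ the additive top $1$ is not a multiplicative zero, since $4\cdot 1=4$), so the criterion genuinely used is the order-ideal one, not flatness.

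For part (1) your plan understates what the completeness proofs require. The constraints from Lemma \ref{nlemma1} do not in general reduce $\bq$ to ``a word already present in $\bu$'': the paper must first establish solutions of the equational problem for several \emph{three}-element subsemirings ($S_2$, $S_4$, $S_6$, $S_{10}$ --- Lemmas \ref{lem24061201}, \ref{lem24061501}, \ref{lem24063001}, \ref{lem24062901}), and the derivations then manufacture genuinely new summands via the basis identities (e.g.\ the graph-connectivity argument for $S_{(4,14)}$, the induction along $\bq$ for $S_{(4,12)}$, whose basis has some twenty identities). Guessing the bases $\Sigma_k$ and carrying out these derivations is the bulk of the paper, not a routine check, so the proposal as it stands has the right skeleton but leaves the proof itself to be done.
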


The proof of Theorem \ref{main} will be completed in the following five sections.

\section{Flat semirings}
In this section we provide the proof for Theorem \ref{main} (2).
Let $S$ be a finite ai-semiring. Then $\mathsf{V}(S)$ denotes the variety generated by $S$,
that is, the smallest variety that contains $S$.
An element $a$ of $S$
is $\emph{cyclic}$ if $a^n=a$ for some $n>1$.
The \emph{index} of $S$ is the smallest $k$
such that $S$ satisfies the identity $x^k\approx x^{k+\ell}$ for some $\ell\geq 1$.
The following result is due to Jackson et al. \cite[Theorem 4.9]{jrz}.
\begin{lem}\label{ln1}
Let $S$ be a finite ai-semiring whose noncyclic elements form an order ideal,
and let $k'$ denote the index of $S$.
If $S_c(a_1\cdots a_k)$ lies in $\mathsf{V}(S)$
for some $k\geq \max(k', 3)$, then $S$ is nonfinitely based.
\end{lem}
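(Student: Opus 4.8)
The plan is to argue by contradiction using the standard syntactic method for nonfinite basis results, in the form made available by Lemma \ref{lem1}. Suppose that $S$ were finitely based. Then $\mathrm{Id}(S)$ would be derivable from a finite set $\Sigma\subseteq\mathrm{Id}(S)$, and since $\Sigma$ is finite there is a bound $N$ on the number of variables and on the lengths of the words occurring in the members of $\Sigma$. The goal is to exhibit a single infinite family of identities $\{\sigma_n\}_{n\ge 1}$, all valid in $S$, such that for each fixed $N$ some $\sigma_n$ fails to be derivable from any set of identities of $S$ of complexity at most $N$; this contradicts the existence of $\Sigma$.

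Because $S_c(a_1\cdots a_k)\in\mathsf{V}(S)$, every identity true in $S$ is true in the transparent flat semiring $S_c(a_1\cdots a_k)$, whose nonzero elements are the nonempty subwords of $a_1\cdots a_k$ and whose product is disjoint juxtaposition (and $0$ otherwise). I would build the $\sigma_n$ around this word of length $k$: each $\sigma_n$ should be a flat identity $\bu_n\approx\bu_n+\bq_n$ in which $\bq_n$ is a long product encoding $n$ interleaved copies of the pattern $a_1\cdots a_k$. The first task is to check $\sigma_n\in\mathrm{Id}(S)$, and here the two structural hypotheses do the work. The index bound $k\ge k'$ guarantees that powers of the \emph{cyclic} elements of $S$ have already stabilised by exponent $k$, so long products built from cyclic values repeat predictably; and the assumption that the \emph{noncyclic} elements form an order ideal guarantees that as soon as one factor of a substituted product takes a noncyclic value, the whole product, together with everything $\le$ it in the order, stays noncyclic, so the two sides of $\sigma_n$ collapse to the same value. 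The remaining all-cyclic substitutions can then be verified by computing directly in $S_c(a_1\cdots a_k)$, using $k\ge 3$ to have enough room for the pattern.

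For non-derivability I would, for each $N$, produce a separating ai-semiring $B_N$ that satisfies every identity of $S$ of complexity at most $N$ yet fails $\sigma_n$ for a suitable $n=n(N)$. Natural candidates for $B_N$ are flat semirings $S_c(\bw_N)$ attached to a longer or suitably deformed word $\bw_N$, or a finite deformation of $S$ itself, chosen so that short rewrites cannot detect the extra length. Using Lemma \ref{lem1}, any derivation of $\sigma_n$ from a complexity-$\le N$ set of identities is a finite chain of single-step rewrites $T_i=P_i\varphi_i(A_i)Q_i+R_i\mapsto T_{i+1}=P_i\varphi_i(B_i)Q_i+R_i$, each altering only a bounded portion of a word; I would attach to every term a word-combinatorial invariant, counting say the maximal number of disjoint occurrences of the $a_1\cdots a_k$-pattern, that such bounded rewrites cannot push past a fixed threshold, while $\bq_n$ realises arbitrarily many such occurrences. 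This separates the two sides and completes the contradiction.

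The main obstacle will be the simultaneous calibration in the previous two paragraphs: the family $\{\sigma_n\}$ must be chosen narrowly enough to lie in $\mathrm{Id}(S)$, which is exactly what forces the use of the index and order-ideal hypotheses in the validity check, yet robustly enough that a bounded-complexity invariant survives along every legal derivation. Ensuring that the invariant is genuinely preserved by every instance of a short identity of $S$, rather than merely by the handful one has in mind, is the delicate point, and it is precisely here that the hypothesis $k\ge\max(k',3)$ and the membership $S_c(a_1\cdots a_k)\in\mathsf{V}(S)$ must be exploited in tandem.
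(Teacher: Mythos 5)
The paper offers no proof of this lemma to compare against: it is quoted directly from Jackson, Ren and Zhao \cite[Theorem 4.9]{jrz}. Judged on its own terms, your text is a research plan rather than a proof, and the gap is essentially total. Every object that would carry the argument is left unconstructed: you never write down the identities $\sigma_n$, only say they should ``encode $n$ interleaved copies of the pattern $a_1\cdots a_k$''; you never verify that any such identity holds in $S$ (the validity paragraph asserts that the index bound and the order-ideal hypothesis ``do the work'' without exhibiting a single substitution or computation); you never produce the separating algebras $B_N$, offering only ``natural candidates''; and you never define the word-combinatorial invariant, let alone prove it is preserved under substitution instances of \emph{arbitrary} identities of $S$ of bounded complexity --- which, as you yourself concede in your closing paragraph, is precisely the delicate point. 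A nonfinite-basis proof lives or dies on these constructions and verifications; describing their intended shape does not establish the lemma.

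Beyond the missing details, the intended division of labour among the hypotheses looks inverted. The assumption is only that the single algebra $S_c(a_1\cdots a_k)$, for one fixed $k$, lies in $\mathsf{V}(S)$; identities built from ``$n$ interleaved copies'' of $a_1\cdots a_k$ for unbounded $n$ have no a priori reason to belong to the equational theory of $S$, and your sketch gives no mechanism for forcing them to. In arguments of this type the witness identities must be manufactured from the structure of $S$ itself --- with the index $k'$ and the order-ideal condition on noncyclic elements controlling where substituted values can land --- while the membership $S_c(a_1\cdots a_k)\in\mathsf{V}(S)$ is what ultimately defeats derivability from any finite basis. Until you specify $\sigma_n$, $B_N$ and the invariant concretely and check each of the three required properties (validity in $S$, validity of short identities in $B_N$, failure of $\sigma_n$ in $B_N$), there is no proof here.
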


From \cite[Proposition 2.6]{jrz} we know that
the variety $\mathsf{V}(S_7)$ contains $S_c(a_1\cdots a_k)$ for all $k\geq 1$.
So by Lemma \ref{ln1} we immediately deduce
\begin{pro}\label{pro1}
Let $S$ be a finite ai-semiring whose noncyclic elements form an order ideal.
If $\mathsf{V}(S)$ contains the semiring $S_7$, then $S$ is nonfinitely based.
\end{pro}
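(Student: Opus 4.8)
The plan is to reduce everything to Lemma~\ref{ln1}, whose hypotheses are nearly identical to those of the proposition. The assumption on $S$ (its noncyclic elements form an order ideal) is exactly the standing hypothesis of Lemma~\ref{ln1}, so the only work is to produce, inside $\mathsf{V}(S)$, a witness of the form $S_c(a_1\cdots a_k)$ with $k$ large enough, and then to quote the lemma.

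First I would establish the variety containment $\mathsf{V}(S_7)\subseteq \mathsf{V}(S)$. This is immediate from the defining property of the generated variety: by hypothesis $S_7\in \mathsf{V}(S)$, and $\mathsf{V}(S)$ is a variety; since $\mathsf{V}(S_7)$ is the \emph{smallest} variety containing $S_7$, it must be contained in $\mathsf{V}(S)$.

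Next I would invoke the cited fact from \cite[Proposition 2.6]{jrz}, recorded in the excerpt, that $S_c(a_1\cdots a_k)\in \mathsf{V}(S_7)$ for every $k\geq 1$. Combining this with the containment of the previous step gives $S_c(a_1\cdots a_k)\in \mathsf{V}(S)$ for every $k\geq 1$. In particular, letting $k'$ denote the index of $S$ and choosing $k=\max(k',3)$, we obtain $S_c(a_1\cdots a_k)\in \mathsf{V}(S)$ with $k\geq\max(k',3)$, which is precisely the input required by Lemma~\ref{ln1}.

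Finally I would apply Lemma~\ref{ln1} to conclude that $S$ is nonfinitely based. There is essentially no obstacle here: the argument is a direct concatenation of the variety-containment step with the two cited results, and the only point meriting a moment's care is the choice $k=\max(k',3)$, which is legitimate because the containment furnishes $S_c(a_1\cdots a_k)$ for \emph{all} $k\geq 1$ and in particular for this value. Thus the proof is immediate once the containment $\mathsf{V}(S_7)\subseteq\mathsf{V}(S)$ is noted.
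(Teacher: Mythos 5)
Your argument is correct and is exactly the paper's reasoning: the paper deduces the proposition ``immediately'' from Lemma~\ref{ln1} together with the fact that $S_c(a_1\cdots a_k)\in\mathsf{V}(S_7)$ for all $k\ge 1$, and your write-up merely makes the intermediate containment $\mathsf{V}(S_7)\subseteq\mathsf{V}(S)$ and the choice $k=\max(k',3)$ explicit. No issues.
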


\begin{cor}\label{cor24070601}
$S_{(4, 11)}$, $S_{(4, 13)}$, $S_{(4, 24)}$, $S_{(4, 25)}$,
$S_{(4, 26)}$, $S_{(4, 28)}$, $S_{(4, 31)}$, $S_{(4, 49)}$ and $S_{(4, 50)}$
are all nonfinitely based.
\end{cor}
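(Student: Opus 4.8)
The plan is to derive the corollary directly from Proposition \ref{pro1}, whose two hypotheses I would verify for each of the nine semirings in turn. The first hypothesis---that the noncyclic elements form an order ideal---I expect to come essentially for free. In every table of Table \ref{tb1} the top-left entry is $1$, so the additive top element $1$ is multiplicatively idempotent and hence cyclic; consequently the noncyclic elements all lie in $\{2,3,4\}$. But in the poset of Figure \ref{figure1} the elements $2,3,4$ form an antichain of atoms beneath $1$, so \emph{every} subset of $\{2,3,4\}$ is a down-set. Thus the noncyclic elements automatically form an order ideal, the condition being able to fail only if $1$ itself were noncyclic, which never occurs here.

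The substance of the argument is the second hypothesis: $S_7\in\mathsf{V}(S)$ for each $S$ on the list. I would establish this in the strongest form, by exhibiting in each case a three-element \emph{subalgebra} isomorphic to $S_7$; since $\mathsf{S}(S)\subseteq\mathsf{V}(S)$ this suffices. Recall that $S_7$ consists of a two-sided multiplicative zero $\infty$ that is simultaneously the additive top, a multiplicative identity $1_{S_7}$, and an element $a$ with $a^2=\infty$. Accordingly, in each table I would locate a multiplicatively idempotent element $e$ and an element $a$ satisfying $a^2=1$ and $ea=ae=a$, and then check that $\{1,e,a\}$ is closed under both operations. The assignment $\infty\mapsto 1$, $1_{S_7}\mapsto e$, $a\mapsto a$ is then an isomorphism onto $S_7$, the role of $\infty$ being played by the global top $1$, which acts as a two-sided zero \emph{within} the chosen three elements even when it is not a global zero.

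Concretely, I expect the witnessing subalgebras to be $\{1,3,4\}$ for $S_{(4,11)}$, $S_{(4,13)}$, $S_{(4,24)}$, $S_{(4,25)}$; the set $\{1,2,4\}$ for $S_{(4,26)}$, $S_{(4,28)}$, $S_{(4,31)}$; and $\{1,2,3\}$ for $S_{(4,49)}$, $S_{(4,50)}$. In each instance $e\in\{3,4\}$ is the idempotent acting as identity and $a\in\{2,3\}$ satisfies $a^2=1$, and verifying closure together with the relations $ea=ae=a$ is a direct inspection of the relevant rows and columns of Table \ref{tb1}.

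The main obstacle, such as it is, is purely bookkeeping: for each table one must correctly single out which three elements close up to a copy of $S_7$, taking care that the chosen top element is genuinely a two-sided multiplicative zero on the restricted set. This is precisely why, for example, one uses $\{1,2,3\}$ rather than a set containing $4$ in $S_{(4,50)}$, where $4$ (not $1$) is the global two-sided zero but sits at an atom rather than at the top and so cannot serve as the image of $\infty$. Once the nine subalgebras are exhibited, Proposition \ref{pro1} applies verbatim to each semiring and the corollary follows.
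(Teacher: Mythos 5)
Your proposal is correct and follows the paper's proof exactly: the paper likewise verifies the two hypotheses of Proposition \ref{pro1} (the noncyclic elements form an order ideal, and $S$ contains a copy of $S_7$) for each of the nine semirings and then cites that proposition, merely leaving as ``easy to see'' the verifications you spell out, and your nine witnessing subalgebras $\{1,3,4\}$, $\{1,2,4\}$, $\{1,2,3\}$ all check out against Table \ref{tb1}. The only slip is the aside that \emph{every} table in Table \ref{tb1} has top-left entry $1$ (false for $S_{(4,58)}$, whose product is constantly $2$), but this is harmless since the claim holds for the nine semirings actually under consideration.
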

\begin{proof}
Let $S$ be a semiring in Corollary \ref{cor24070601}.
It is easy to see that the set of noncyclic elements of $S$ forms an order ideal
and that $S$ contains a copy of $S_7$.
By Proposition \ref{pro1} we have that $S$ is nonfinitely based.
\end{proof}

\begin{cor}\label{l1}
Let $\mathcal{V}$ be a variety generated by a finite family of finite flat semirings.
If $\mathcal{V}$ contains the semiring $S_7$, then it is nonfinitely based.
\end{cor}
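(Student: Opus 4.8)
The plan is to reduce the statement about a finitely generated variety to a single generating algebra and then invoke Proposition \ref{pro1}. Write $\mathcal{V}=\mathsf{V}(\{S_1,\dots,S_n\})$, where each $S_i$ is a finite flat semiring, and set $S=S_1\times\cdots\times S_n$. Since a variety is closed under finite products and each factor $S_i$ is a homomorphic image of $S$ under the coordinate projection, one has $\mathsf{V}(S)=\mathcal{V}$. Thus $S$ is a single finite ai-semiring generating $\mathcal{V}$, and by hypothesis $\mathsf{V}(S)$ contains $S_7$. It therefore suffices to verify the remaining hypothesis of Proposition \ref{pro1}, namely that the noncyclic elements of $S$ form an order ideal; granting this, Proposition \ref{pro1} yields that $S$, and hence $\mathcal{V}$, is nonfinitely based.

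The core of the argument is the verification of that order-ideal condition, and the main point is that, although $S$ itself need not be flat (the excerpt already notes that $S_7\times S_7$ is not flat), the condition is nonetheless inherited from the factors. The relevant structural fact about a flat semiring $F$ is that its additive reduct is a semilattice of height $1$ whose top is the multiplicative zero $0$: every nonzero element is minimal in the order $\le$, and $0$ is the unique element lying above any other. Indeed, if $a\neq 0$ and $x\le a$ with $x\neq a$, then $x+a=0\neq a$, contradicting $x\le a$, so the only element below a nonzero $a$ is $a$ itself. Since $0=0^2$ is cyclic, every noncyclic element of $F$ is nonzero and hence minimal, so in a single flat semiring the noncyclic elements form an order ideal trivially.

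To push this to the product I would use two observations. First, noncyclicity is detected coordinatewise: an element $\ba=(a_1,\dots,a_n)$ satisfies $\ba^{m}=\ba$ exactly when $a_i^{m}=a_i$ in $S_i$ for every $i$, so $\ba$ is cyclic iff every $a_i$ is cyclic, and therefore $\ba$ is noncyclic iff some coordinate $a_j$ is noncyclic in $S_j$ (for the forward direction, if every $a_i$ is cyclic one finds a common exponent $m>1$ with $a_i^{m}=a_i$ for all $i$, taking $m\equiv 1$ modulo each period). Second, the order on $S$ is computed coordinatewise. Now suppose $\ba$ is noncyclic with witnessing coordinate $a_j$ noncyclic, hence nonzero and minimal in $S_j$, and let $\bb=(b_1,\dots,b_n)\le\ba$. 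Then $b_j\le a_j$ in $S_j$, and minimality of the nonzero element $a_j$ forces $b_j=a_j$. Thus $\bb$ has a noncyclic coordinate and is itself noncyclic, which is precisely the assertion that the noncyclic elements of $S$ form an order ideal.

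The main obstacle, and the only genuinely nontrivial step, is this passage from the factors to the product: one must rule out that a noncyclic element becomes comparable to cyclic elements in the product order in a way that breaks downward closure. The argument above sidesteps this by isolating a single noncyclic coordinate and exploiting the minimality of nonzero elements in a flat factor, which pins that coordinate down under the product order. Everything else is routine bookkeeping: the equality $\mathsf{V}(S)=\mathcal{V}$ and the coordinatewise descriptions of multiplication and of $\le$ are standard, and the final appeal to Proposition \ref{pro1} is immediate once the order-ideal condition is established.
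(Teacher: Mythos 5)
Your proof is correct and follows essentially the same route as the paper: reduce to the direct product $S_1\times\cdots\times S_n$, verify the order-ideal hypothesis of Proposition \ref{pro1} using the flatness of each factor, and conclude. The only difference is cosmetic — the paper argues the contrapositive (everything above a cyclic element is cyclic, since each coordinate above $a_i$ is $a_i$ or $0$), while you argue directly that everything below a noncyclic element stays noncyclic via minimality of nonzero elements in a flat factor.
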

\begin{proof}
Suppose that $\mathcal{V}$ is a variety that contains $S_7$ and is generated by finite flat semirings
$S_i, 1\leq i \leq n$. Then $\mathcal{V}$ is generated by the finite direct product $S_1\times \cdots\times S_n$.
Assume that $(a_1, \ldots, a_n)$ is an arbitrary cyclic element of $S_1\times \cdots\times S_n$.
Then there exists $k> 1$ such that
\[
(a^k_1, \ldots, a^k_n)=(a_1, \ldots, a_n)^k=(a_1, \ldots, a_n)
\]
and so $a^k_i=a_i$ for all $1 \leq i \leq n$. If
$(a_1, \ldots, a_n)\leq (b_1, \ldots, b_n)$ for some element $(b_1, \ldots, b_n)$ of $S_1\times \cdots\times S_n$,
then $a_i\leq b_i$ for all $1\leq i\leq n$.
Since every $S_i$ is a flat semiring, it follows that $b_i=a_i$ or $0$ for $1\leq i\leq n$.
Thus $b_i^k=b_i$ for all $1\leq i\leq n$ and so
\[
(b_1, \ldots, b_n)^k=(b^k_1, \ldots, b^k_n)=(b_1, \ldots, b_n).
\]
This shows that $(b_1, \ldots, b_n)$ is a cyclic element.
So the set of all noncyclic elements of $S_1\times \cdots\times S_n$ forms an order ideal.
By Proposition \ref{pro1} we immediately deduce that $\mathcal{V}$ is nonfinitely based.
\end{proof}

\begin{remark}
Corollary \ref{l1} has been mentioned in \cite[P. 235]{jrz}. We present a proof here.
\end{remark}

Let $n$ be a positive integer and $\mathbf{F}_n$ denote the variety generated by all flat semirings
of order $n$.

\begin{thm}
Let $n\geq 1$. Then
$\mathbf{F}_n$ is finitely based if and only if $n\leq 2$.
\end{thm}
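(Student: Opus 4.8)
The plan is to prove the two directions separately, with the forward implication ($n\le 2$) being short and the converse ($n\ge 3$) carrying the real content, which reduces to producing a copy of $S_7$ inside $\mathbf{F}_n$ and invoking Corollary \ref{l1}.

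First I would dispose of the small cases. For $n=1$ the only flat semiring is the one-element algebra, so $\mathbf{F}_1$ is the trivial variety and is finitely based. For $n=2$ a direct check shows that, up to isomorphism, the only flat semirings of order two are $M_2$ and $T_2$: writing the carrier as $\{0,a\}$ with $0$ the multiplicative zero and additive top, the only freedom is $a\cdot a\in\{a,0\}$, giving $M_2$ and $T_2$ respectively. Hence $\mathbf{F}_2=\mathsf{V}(M_2,T_2)$ is generated by ai-semirings of order two, and by the theorem of Shao and Ren that every such variety is finitely based, $\mathbf{F}_2$ is finitely based. This establishes the ``if'' direction.

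For the converse, fix $n\ge 3$. Since there are only finitely many flat semirings of order $n$ up to isomorphism, $\mathbf{F}_n$ is generated by a finite family of finite flat semirings, so Corollary \ref{l1} will finish the argument as soon as $S_7\in\mathbf{F}_n$ is known. To obtain this I would use the flat semiring $M_c(a^{n-2})$, whose underlying set is $\{1,a,a^2,\dots,a^{n-2},0\}$ and which therefore has exactly $n$ elements. Consider the partition with blocks $\{1\}$, $\{a\}$, and $\{a^2,\dots,a^{n-2},0\}$. The verification that this is a semiring congruence is routine: in a flat semiring the sum of any two distinct elements is the top $0$, so every sum meeting the third block returns to it, while for the multiplication the only essential point is that $a\cdot a=a^2$ lies in the third block, which forces the class of $a$ to square to the class of $0$. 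The resulting three-element quotient, with multiplicative identity (class of $1$), square-zero element (class of $a$), and absorbing top (class of $0$), is precisely $S_7$. As varieties are closed under homomorphic images, $S_7\in\mathsf{V}(M_c(a^{n-2}))\subseteq\mathbf{F}_n$; for $n=3$ this degenerates to $M_c(a)\cong S_7$ itself. Corollary \ref{l1} then yields that $\mathbf{F}_n$ is nonfinitely based.

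The only real obstacle I anticipate is the congruence verification for $M_c(a^{n-2})$, where one must confirm both that the three blocks are compatible with the two operations and that the quotient is genuinely $S_7$ rather than a smaller or different algebra. Here the use of the monoid construction $M_c$ rather than the semigroup construction $S_c$ is essential, since $S_7$ carries a multiplicative identity; the semigroup analogue $S_c(a^{n-1})$ lacks such an element and would collapse to a two-element quotient isomorphic to $T_2$ instead.
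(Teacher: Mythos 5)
Your proof is correct and follows essentially the same route as the paper: dispose of $n\le 2$ via the Shao--Ren result on order-two ai-semirings, and for $n\ge 3$ place $S_7$ inside $\mathbf{F}_n$ by means of the $n$-element flat semiring $M(a^{n-2})$ and then invoke Corollary \ref{l1}. The only difference is local and immaterial: the paper extracts $S_7$ directly as the subalgebra $\{1,a^{n-2},0\}$ of $M(a^{n-2})$ (closed under multiplication since $a^{2(n-2)}=0$ once $n\ge 3$), whereas you realize it as a quotient by the three-block congruence, which is valid but requires the extra compatibility verification you carry out.
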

\begin{proof}
If $n\leq 2$, then by the main result of \cite{sr}, we can deduce that $\mathbf{F}_n$ is finitely based.
Suppose that $n\geq 3$. Let $a$ be a letter.
Then $M(a^{n-2})$ is a flat semiring of order $n$ and
so it is a member of $\mathbf{F}_n$.
Since $S_7$ is isomorphic to $\{1, a^{n-2}, 0\}$,
it follows that that $S_7$ lies in $\mathbf{F}_n$.
By Corollary \ref{l1} we deduce that $\mathbf{F}_n$ is nonfinitely based as required.
\end{proof}

In the remainder of this section we introduce two constructions on flat semirings.
Let $S$ be a flat semiring. If we adjoin an extra element $b$ to $S$
and define
\[
bb=ba=ab=0
\]
for all $a\in S$, then $(S \cup \{b\}, \cdot)$ is a $0$-cancellative
semigroup and so $S \cup \{b\}$ becomes a flat semiring. It will be called the \emph{null extension} of $S$ and is
denoted by $S_{ne}$. One can easily verify that $S_{ne}$ is isomorphic to a subdirect product of $S$ and $T_2$.
So we have

\begin{pro}\label{lnn4}
Let $S$ be a flat semiring. Then
$\mathsf{V}(S_{ne})$ is the join of $\mathsf{V}(S)$ and $\mathsf{V}(T_2)$.
In particular, if $\mathsf{V}(S)$ contains $T_2$, then $\mathsf{V}(S_{ne})=\mathsf{V}(S)$.
\end{pro}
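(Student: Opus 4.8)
The plan is to deduce the proposition directly from the subdirect decomposition of $S_{ne}$ recorded just before the statement, together with the standard fact that $\mathsf{V}(A)\vee\mathsf{V}(B)=\mathsf{V}(A\times B)$ for any two algebras $A,B$ of the same type. To make the projections explicit, I would define $\pi_1\colon S_{ne}\to S$ to be the identity on $S$ and to send the adjoined element $b$ to the zero $0$ of $S$, and define $\pi_2\colon S_{ne}\to T_2$ by $\pi_2(b)=0$ and $\pi_2(x)=1$ for every $x\in S$, where $1$ is the top element of $T_2$. A short verification, using $bb=ba=ab=0$ and the flat addition of $S_{ne}$, shows that $\pi_1$ and $\pi_2$ are semiring homomorphisms, that the map $x\mapsto(\pi_1(x),\pi_2(x))$ is injective, and that both $\pi_1$ and $\pi_2$ are onto; this is precisely the subdirect representation $S_{ne}\le S\times T_2$.

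With this in hand I would prove the two inclusions. For $\mathsf{V}(S_{ne})\subseteq\mathsf{V}(S)\vee\mathsf{V}(T_2)$, observe that $S_{ne}$ is a subalgebra of $S\times T_2$, and that $S\times T_2$ belongs to $\mathsf{V}(S)\vee\mathsf{V}(T_2)$ since a variety is closed under finite products; hence $S_{ne}$, and therefore $\mathsf{V}(S_{ne})$, lies in the join. For the reverse inclusion, the surjectivity of $\pi_1$ and $\pi_2$ exhibits $S$ and $T_2$ as homomorphic images of $S_{ne}$, so $S,T_2\in\mathsf{V}(S_{ne})$ and therefore $\mathsf{V}(S)\vee\mathsf{V}(T_2)\subseteq\mathsf{V}(S_{ne})$. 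Combining the two inclusions yields $\mathsf{V}(S_{ne})=\mathsf{V}(S)\vee\mathsf{V}(T_2)$, and the final assertion of the proposition follows at once: if $T_2\in\mathsf{V}(S)$ then $\mathsf{V}(T_2)\subseteq\mathsf{V}(S)$, so the join collapses to $\mathsf{V}(S)$ and $\mathsf{V}(S_{ne})=\mathsf{V}(S)$.

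I do not expect any serious obstacle. The only point that deserves care is checking that $\pi_2$ respects multiplication: this works precisely because every product in $S_{ne}$ lands in $S$, so the adjoined element $b$ is never a product, while multiplication in $T_2$ is constant; thus both sides of $\pi_2(xy)=\pi_2(x)\pi_2(y)$ equal the top element of $T_2$. One should also recall the routine identity $\mathsf{V}(A)\vee\mathsf{V}(B)=\mathsf{V}(A\times B)$, whose nontrivial direction uses that $A$ and $B$ are homomorphic images of $A\times B$ under the coordinate projections. Since the subdirect embedding is already granted above the statement, the entire argument reduces to these two formal inclusions.
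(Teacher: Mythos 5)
Your proof is correct and follows essentially the same route as the paper, which simply asserts that $S_{ne}$ is isomorphic to a subdirect product of $S$ and $T_2$ and deduces the proposition from that; you have merely written out the projections $\pi_1,\pi_2$, the verification that they are surjective homomorphisms separating points, and the standard identity $\mathsf{V}(A)\vee\mathsf{V}(B)=\mathsf{V}(A\times B)$. All of these details check out (in particular $\pi_2$ respects multiplication because every product in $S_{ne}$ lies in $S$ while multiplication in $T_2$ is constantly the top element), so no further comment is needed.
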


\begin{pro}\label{pro24071001}
If $n\geq 1$, then $\mathbf{F}_n$ is a proper subvariety of $\mathbf{F}_{n+1}$.
\end{pro}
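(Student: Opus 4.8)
The plan is to show that every flat semiring of order $n$ already lies in $\mathbf{F}_{n+1}$; since $\mathbf{F}_n$ is by definition the variety generated by all such semirings, this immediately gives $\mathbf{F}_n\subseteq\mathbf{F}_{n+1}$. The device that makes this work is precisely the null extension just introduced. So I would let $S$ be an arbitrary flat semiring with $|S|=n$ and form its null extension $S_{ne}=S\cup\{b\}$. Adjoining the single new element $b$ gives $|S_{ne}|=n+1$, and by the preceding discussion $S_{ne}$ is again a flat semiring; hence $S_{ne}\in\mathbf{F}_{n+1}$ and therefore $\mathsf{V}(S_{ne})\subseteq\mathbf{F}_{n+1}$.

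It then remains to locate $S$ inside $\mathsf{V}(S_{ne})$, and this is exactly what Proposition \ref{lnn4} delivers: $\mathsf{V}(S_{ne})=\mathsf{V}(S)\vee\mathsf{V}(T_2)$, which contains $\mathsf{V}(S)$ and in particular contains $S$ itself. Combining the two inclusions yields $S\in\mathsf{V}(S)\subseteq\mathsf{V}(S_{ne})\subseteq\mathbf{F}_{n+1}$. As $S$ was an arbitrary generator of $\mathbf{F}_n$, and varieties are closed under the operations that produce $\mathbf{F}_n$ from its generators, I conclude $\mathbf{F}_n\subseteq\mathbf{F}_{n+1}$.

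There is essentially no hard step here; the entire content is absorbed into the null-extension construction and Proposition \ref{lnn4}, so the main obstacle is really just bookkeeping. The two points that need care are confirming that the adjoined element $b$ is genuinely new, so that the order increases by exactly one and $S_{ne}$ lands in $\mathbf{F}_{n+1}$ rather than in some $\mathbf{F}_m$ with $m\le n$, and checking the boundary case $n=1$, where $S$ is the trivial one-element flat semiring $\{0\}$ and $S_{ne}$ is the two-element flat semiring on $\{0,b\}$ with trivial multiplication. In that degenerate case one verifies directly that $S_{ne}$ is still a flat semiring of order $2$, so the argument goes through unchanged.
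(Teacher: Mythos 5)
Your proof is correct and follows essentially the same route as the paper: both rest on the null extension $S_{ne}$ being a flat semiring of order $n+1$ from which $S$ can be recovered. The only cosmetic difference is that the paper simply observes $S$ is a subalgebra of $S_{ne}$ and invokes closure under subalgebras, whereas you route through Proposition \ref{lnn4} and the inclusion $\mathsf{V}(S)\subseteq\mathsf{V}(S)\vee\mathsf{V}(T_2)=\mathsf{V}(S_{ne})$; both are valid and equally short.
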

\begin{proof}
Let $S$ be a flat semiring of order $n$. Then $S_{ne}$ is
a flat semiring of order $n+1$ and contains $S$.
Since $S_{ne}$ is a member of $\mathbf{F}_{n+1}$,
it follows that $S$ lies in $\mathbf{F}_{n+1}$.
So $\mathbf{F}_n$ is a subvariety of $\mathbf{F}_{n+1}$.
On the other hand, it is easy to see that the identity
\[
x^n \approx x^{n+n!}
\]
is satisfied by every flat semiring of order $n$
and so does hold in $\mathbf{F}_n$, but is not satisfied by $\mathbf{F}_{n+1}$,
since the flat semiring $S(a^n)$ of order $n+1$ does not satisfy it.
Thus $\mathbf{F}_n$ is a proper subvariety of $\mathbf{F}_{n+1}$ as required.
\end{proof}

Let $S$ be a flat semiring. If we adjoin an extra element $e$ to $S$
and define
\[
e^2=e, ~ea=ae=0
\]
for all $a\in S\setminus \{e\}$, then $(S \cup \{e\}, \cdot)$ is a $0$-cancellative
semigroup and so $S \cup \{e\}$ becomes a flat semiring. It will be called the \emph{idempotent extension} of $S$ and is
denoted by $S_{ie}$. It is easy to see that $S_{ie}$ is isomorphic to a subdirect product of $S$ and $M_2$.
Thus we have

\begin{pro}\label{pro24070610}
Let $S$ be a flat semiring. Then
$\mathsf{V}(S_{ie})$ is the join of $\mathsf{V}(S)$ and $\mathsf{V}(M_2)$.
In particular, if $\mathsf{V}(S)$ contains $M_2$, then $\mathsf{V}(S_{ie})=\mathsf{V}(S)$.
\end{pro}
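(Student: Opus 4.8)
The plan is to follow exactly the template used for Proposition~\ref{lnn4}, replacing $T_2$ by $M_2$ and the null element by the idempotent $e$. The entire argument rests on the subdirect decomposition asserted just before the statement, so the first task is to make that decomposition explicit. I would define two maps out of $S_{ie}=S\cup\{e\}$: a projection $\pi_1\colon S_{ie}\to S$ fixing every element of $S$ and sending $e$ to the top element $0$ of $S$, and a projection $\pi_2\colon S_{ie}\to M_2$ sending $e$ to the multiplicative identity of $M_2$ and sending every element of $S$ to the top element of $M_2$ (equivalently, its multiplicative zero).

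The bulk of the verification --- and the only place where care is genuinely needed --- is checking that $\pi_1$ and $\pi_2$ respect both $+$ and $\cdot$. I would split into cases according to whether the arguments lie in $S$ or equal $e$, using the defining relations $e^2=e$ and $ea=ae=0$ for $a\in S\setminus\{e\}$, together with the flat-semiring rule that a sum of two distinct elements collapses to the top $0$. For $\pi_1$ the decisive cases are $\pi_1(e\cdot a)=\pi_1(0)=0=0\cdot a=\pi_1(e)\pi_1(a)$ and $\pi_1(e+a)=\pi_1(0)=0=0+a=\pi_1(e)+\pi_1(a)$; for $\pi_2$ the analogous cases use that the image of $e$ is the identity of $M_2$ and the image of $S$ is absorbing for both operations of $M_2$. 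Both maps are plainly surjective, so the diagonal map $x\mapsto(\pi_1(x),\pi_2(x))$ embeds $S_{ie}$ as a subdirect product of $S$ and $M_2$: injectivity is immediate, since $e$ is the unique preimage of the identity under $\pi_2$ while $\pi_1$ separates the elements of $S$.

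With the subdirect decomposition in hand the varietal statement is formal. Since $S_{ie}$ is a subalgebra of $S\times M_2$ and $S\times M_2\in\mathsf{V}(S)\vee\mathsf{V}(M_2)$, we obtain $\mathsf{V}(S_{ie})\subseteq\mathsf{V}(S)\vee\mathsf{V}(M_2)$. Conversely, because the two projections are onto, both $S$ and $M_2$ are homomorphic images of $S_{ie}$, whence $S,M_2\in\mathsf{V}(S_{ie})$ and therefore $\mathsf{V}(S)\vee\mathsf{V}(M_2)\subseteq\mathsf{V}(S_{ie})$. This yields the equality $\mathsf{V}(S_{ie})=\mathsf{V}(S)\vee\mathsf{V}(M_2)$, i.e.\ that $\mathsf{V}(S_{ie})$ is the join of $\mathsf{V}(S)$ and $\mathsf{V}(M_2)$. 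The final clause is then immediate: if $M_2\in\mathsf{V}(S)$ then $\mathsf{V}(M_2)\subseteq\mathsf{V}(S)$, so the join collapses and $\mathsf{V}(S_{ie})=\mathsf{V}(S)$.

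I expect no serious obstacle, as the argument is entirely parallel to the null-extension case. The one point deserving attention is the homomorphism check for $\pi_2$, where one must confirm that sending all of $S$ to a single element of $M_2$ is compatible with addition as well as multiplication --- this works precisely because the top element of $M_2$ absorbs under both operations, matching the flat-semiring collapse $a+b=0$ forced in $S_{ie}$.
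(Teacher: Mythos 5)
Your proof is correct and takes the same route as the paper, which simply asserts that $S_{ie}$ is isomorphic to a subdirect product of $S$ and $M_2$ and reads off the join; you have merely supplied the routine verification of the two projections (both of which check out, including the key cases $\pi_1(e\cdot a)=0=\pi_1(e)\pi_1(a)$ and $\pi_2(e+a)$ landing on the absorbing top of $M_2$). Nothing further is needed.
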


An ai-semiring identity $\bu \approx \bv$ is \emph{regular} if $c(\bu)=c(\bv)$.
Let $\mathcal{V}$ and $\mathcal{W}$ be ai-semiring varieties.
Then $\mathcal{W}$ is the \emph{regularization} of $\mathcal{V}$
if $\mathcal{W}$ is the join of $\mathcal{V}$ and $\mathsf{V}(M_2)$,
that is, $\mathcal{W}$ is defined by all regular identities of $\mathcal{V}$.
By Proposition \ref{pro24070610} we know that the variety $\mathsf{V}(S_{ie})$ is determined by
all regular identities that are satisfied by $S$. So
$\mathsf{V}(S_{ie})$ is the regularization of $\mathsf{V}(S)$.

\section{Equational basis of $S_{(4, 4)}$, $S_{(4, 14)}$ and $S_{(4, 20)}$}
In this section we first show that some $4$-element ai-semirings are finitely based by some known results,
and then give finite equational basis for $S_{(4, 4)}$, $S_{(4, 14)}$, and $S_{(4, 20)}$
from the syntactic approach.

\begin{pro}\label{pro24070701}
The following ai-semirings are finitely based: $S_{(4, 1)}$, $S_{(4, 3)}$, $S_{(4, 7)}$,
$S_{(4, 17)}$, $S_{(4, 19)}$, $S_{(4, 22)}$, $S_{(4, 32)}$, $S_{(4, 33)}$, $S_{(4, 35)}$,
$S_{(4, 38)}$, $S_{(4, 39)}$, $S_{(4, 40)}$, $S_{(4, 43)}$, $S_{(4, 44)}$, $S_{(4, 51)}$,
$S_{(4, 52)}$, $S_{(4, 55)}$, $S_{(4, 56)}$, $S_{(4, 57)}$ and $S_{(4, 58)}$.
\end{pro}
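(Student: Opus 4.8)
The plan is to sort the twenty semirings into two groups, each handled by a result already in the literature, so that no fresh syntactic computation is required. The first group consists of the multiplicatively idempotent members. Inspecting Table~\ref{tb1}, the semirings whose multiplication table has diagonal $1,2,3,4$ are $S_{(4,32)}$, $S_{(4,33)}$, $S_{(4,35)}$, $S_{(4,38)}$, $S_{(4,51)}$, $S_{(4,52)}$, $S_{(4,56)}$ and $S_{(4,57)}$; each of these satisfies $x^2\approx x$ and is therefore finitely based by the theorem of Pastijn et al.~\cite{gpz05, pas05}.

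For the remaining twelve, $S_{(4,1)}$, $S_{(4,3)}$, $S_{(4,7)}$, $S_{(4,17)}$, $S_{(4,19)}$, $S_{(4,22)}$, $S_{(4,39)}$, $S_{(4,40)}$, $S_{(4,43)}$, $S_{(4,44)}$, $S_{(4,55)}$ and $S_{(4,58)}$, I would show that each is a subdirect product of $2$-element ai-semirings and then appeal to Shao and Ren~\cite{sr}. The additive reduct being the height-$1$ semilattice of Figure~\ref{figure1}, the sum of any two distinct atoms equals the top $1$, so the only additive congruences with two blocks are the three partitions that isolate a single atom $i\in\{2,3,4\}$ from the other three elements. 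For each such partition I would verify on the multiplication table that it is also a congruence for $\cdot$, and identify the two-element quotient among $L_2$, $R_2$, $M_2$, $D_2$, $N_2$, $T_2$. Since these three quotients separate $2$, $3$ and $4$ (and hence all points), $S$ embeds subdirectly into their product; thus $\mathsf{V}(S)$ is generated by order-two semirings and is finitely based by~\cite{sr}. For the three flat members $S_{(4,1)}$, $S_{(4,3)}$ and $S_{(4,17)}$ the same factorization may be obtained more conceptually: they are iterated null extensions (of the trivial semiring, of $M_2$, and of the idempotent flat semiring on $\{1,3,4\}$ respectively), and a null extension $S_{ne}$ is a subdirect product of $S$ and $T_2$, as recorded in Proposition~\ref{lnn4}.

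The only real work lies in the twelvefold table inspection, and the single point demanding care is whether each atom-isolating partition really is multiplicatively stable. This can fail only if some atom $i$ occurs as a product $xy$ with both $x\neq i$ and $y\neq i$ while other products of large-block elements avoid the class of $i$. I expect this obstruction never to materialise: in each of the twelve tables an atom either never appears as a product of two elements distinct from it (so products of large-block elements stay inside the large block), or every product collapses to one element, as in $S_{(4,58)}$ where $xy=2$ throughout and all three quotients are copies of $N_2$. Checking this dichotomy uniformly over the twelve semirings is the main, though entirely elementary, obstacle.
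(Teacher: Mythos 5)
Your proposal is correct, and it reaches the conclusion by a genuinely different route from the paper. The paper's own proof is purely syntactic: it observes that each of the twenty semirings satisfies the finite equational basis of the variety generated by all ai-semirings of order two given in \cite[Theorem 2.1]{sr}, so each lies in that variety, and then invokes the main result of \cite{sr}. You instead argue structurally: eight members are handled at once by multiplicative idempotency and the theorem of Pastijn et al.\ \cite{gpz05, pas05} (your list of the semirings with diagonal $1,2,3,4$ is accurate; the paper itself covers these same eight a second time in its next proposition via $x^3\approx x$), and the remaining twelve are exhibited as subdirect products of $2$-element quotients. Your identification of the three atom-isolating partitions as the only two-block additive congruences is correct, and I have checked that all three are indeed multiplicative congruences in each of the twelve tables, with quotients among $L_2$, $R_2$, $M_2$, $D_2$, $N_2$, $T_2$; since each quotient is a homomorphic image of $S$, this gives $\mathsf{V}(S)=\mathsf{V}(Q_2,Q_3,Q_4)$ exactly, so the appeal to \cite{sr} is immediate. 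What your approach buys is more information (an explicit generating set of order-two semirings for each $\mathsf{V}(S)$, in the spirit of the remarks the paper makes after its later propositions); what the paper's approach buys is uniformity, since one identity check disposes of all twenty semirings without case analysis.

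One small imprecision: your description of the only possible obstruction to multiplicative stability covers just the case of a product $xy=i$ with both $x\neq i$ and $y\neq i$. The congruence condition also requires that, for fixed $a$, the truth of $ab=i$ be constant as $b$ ranges over the large block (and dually), e.g.\ in $S_{(4,7)}$ one must check that $a\cdot 4=4$ for \emph{every} $a\neq 4$, not merely that no product of two large-block elements equals $4$. These row and column conditions do hold throughout your twelve tables, so nothing breaks, but they are part of the verification and should be stated.
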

\begin{proof}
It is easy to verify that every semiring in Proposition \ref{pro24070701}
satisfies the equational basis (see \cite[Theorem 2.1]{sr}) of the variety generated by all ai-semirings of order two.
By the main result of \cite{sr} these algebras are all finitely based.
\end{proof}

\begin{pro}\label{pro24070702}
The following ai-semirings are finitely based: $S_{(4, 34)}$, $S_{(4, 36)}$, $S_{(4, 53)}$ and $S_{(4, 54)}$.
\end{pro}
\begin{proof}
It is easy to see that every semiring in Proposition \ref{pro24070702} satisfies
$x^3\approx x$. From the main result of \cite{rzw} we deduce that these semirings are all finitely based.
\end{proof}

\begin{pro}
The following ai-semirings are finitely based: $S_{(4, 2)}$, $S_{(4, 5)}$,
$S_{(4, 6)}$, $S_{(4, 8)}$, $S_{(4, 9)}$, $S_{(4, 10)}$,  $S_{(4, 18)}$,
$S_{(4, 23)}$, $S_{(4, 27)}$, $S_{(4, 29)}$, $S_{(4, 37)}$.
\end{pro}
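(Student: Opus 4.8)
The plan is to avoid computing equational bases from scratch and instead reduce each of the eleven semirings to a case that is already settled. The first observation is that, by Figure~\ref{figure1} and the fact that the first row and first column of every table in the list is constantly $1$, each $S_{(4,k)}$ here is a flat semiring with zero $1$; this makes the two extension constructions of Section~2 available.

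I would dispose of six of them immediately. In $S_{(4,2)}$, $S_{(4,5)}$ and $S_{(4,10)}$ there is an element whose multiplication row and column are constantly $1$ and which never occurs as a product, so deleting it presents the semiring as a null extension $T_{ne}$; in $S_{(4,18)}$ and $S_{(4,27)}$ the element $3$ is a nontrivial idempotent orthogonal to everything else, presenting the semiring as an idempotent extension $T_{ie}$. In each case the three–element base $T$ has $T_2$ (respectively $M_2$) as a two–element subalgebra, so Proposition~\ref{lnn4} (respectively Proposition~\ref{pro24070610}) yields $\mathsf V(S_{(4,k)})=\mathsf V(T)$, and $T$, not being $S_7$, is finitely based by \cite{zrc}. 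Separately, $S_{(4,37)}$ is the flat extension of the group $(\{2,3,4\},\cdot)\cong\mathbb Z_3$, so it satisfies $x^4\approx x$ and $xy\approx yx$; as $3$ is square–free it is finitely based by \cite{rzs20} (equivalently by \cite{jac08}).

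This leaves $S_{(4,6)}$, $S_{(4,8)}$, $S_{(4,9)}$, $S_{(4,23)}$ and $S_{(4,29)}$, none of which is an extension of the above type. I would first halve the work by duality: taking the opposite multiplication is an anti-isomorphism of ai-semirings that preserves the finite basis property, and one checks $S_{(4,23)}\cong S_{(4,6)}^{\mathrm{op}}$ while $S_{(4,29)}$ is self–opposite (swap $3$ and $4$). It then suffices to treat $S_{(4,6)}$, $S_{(4,29)}$ and the two commutative semirings $S_{(4,8)}\cong S_c(ab)$ and $S_{(4,9)}\cong S_c(a^3)$. The latter two use only one or two distinct variables, so the non–finite–basis obstruction of Lemma~\ref{ln1}, which needs $S_c(a_1\cdots a_k)$ with $k\ge 3$, cannot arise; I would obtain their bases either by citing the finite basis result for such short-word semirings $S_c(\bw)$ or directly through Lemma~\ref{lem1}.

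The genuine difficulty is concentrated in $S_{(4,6)}$ and $S_{(4,29)}$: these are flat but not of the form $x^n\approx x$, not extensions, and not flat extensions of semigroups, so no theorem cited so far applies directly. For them I would compute a finite basis by hand, using Lemma~\ref{lem1} to reduce derivability to chains of one-step rewritings and Lemma~\ref{nlemma1} to extract, from the order-two divisors $L_2,R_2,M_2,D_2,N_2,T_2$, the head, tail and content constraints that any consequence $\bu\approx\bu+\bq$ must obey. Showing that a short candidate list of identities already forces every such consequence is where the real effort lies; I expect these computations to run parallel to those carried out later in Section~3 for $S_{(4,4)}$, $S_{(4,14)}$ and $S_{(4,20)}$.
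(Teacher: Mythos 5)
Your reductions via the extension constructions of Section~2 are correct and constitute a genuinely different route from the paper's for five of the eleven semirings. The paper never invokes Propositions~\ref{lnn4} and \ref{pro24070610} here; it instead checks directly that each semiring contains a copy of a suitable three-element semiring and satisfies that semiring's known finite basis from \cite{zrc}, so the two generate the same variety. Your identifications of $S_{(4,2)}$, $S_{(4,5)}$, $S_{(4,10)}$ as null extensions and of $S_{(4,18)}$, $S_{(4,27)}$ as idempotent extensions all check out against Table~\ref{tb1}; the required copies of $T_2$ (resp.\ $M_2$) sit inside each three-element base, and each base, not being $S_7$, is finitely based, so these five cases are complete. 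Your treatment of $S_{(4,37)}$ and the identifications $S_{(4,8)}\cong S_c(ab)$, $S_{(4,9)}\cong S_c(a^3)$ coincide with the paper's, which supplies the precise citations \cite[Proposition 3.2]{rjzl} and \cite[Lemma 3.2]{wzr23}; note that your remark that Lemma~\ref{ln1} cannot apply only removes one obstruction and proves nothing by itself, so those citations (or an actual basis) are genuinely needed.

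The gap lies in the cases you defer to a hand computation. For $S_{(4,6)}$ you miss the decomposition the paper uses: $S_{(4,6)}$ is a subdirect product of its two subalgebras on $\{1,2,4\}$ and $\{1,3,4\}$, each a copy of $S_6$, so $\mathsf{V}(S_{(4,6)})=\mathsf{V}(S_6)$ and no new basis is required (and $S_{(4,23)}$ then follows by your duality observation). More seriously, for $S_{(4,29)}$ the plan to ``compute a finite basis by hand, parallel to Section~3'' substantially underestimates the difficulty: $S_{(4,29)}$ is the semiring $B_0$, and establishing its finite basis is the entire content of \cite{shap23}; it is not a short Lemma~\ref{lem1} calculation of the kind carried out for $S_{(4,4)}$ or $S_{(4,14)}$. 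As written, the proposal therefore does not actually establish the statement for $S_{(4,29)}$, and leaves $S_{(4,6)}$ and $S_{(4,23)}$ resting on an unexecuted computation, even though everything it does carry out is correct.
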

\begin{proof}
It is easy to see that $S_{(4, 2)}$ contains a copy of $S_2$ and
satisfies the finite equational basis (see \cite[Proposition 2]{zrc}) of $S_2$.
This implies that $\mathsf{V}(S_{(4, 2)})=\mathsf{V}(S_2)$
and so $S_{(4, 2)}$ is finitely based.
One can use the same approach to prove that
\[
\mathsf{V}(S_{(4, 5)})=\mathsf{V}(S_{(4, 18)})=\mathsf{V}(S_6)
\]
and
\[
\mathsf{V}(S_{(4, 10)})=\mathsf{V}(S_{(4, 23)})=\mathsf{V}(S_{(4, 27)})=\mathsf{V}(S_{4}).
\]
Moreover, it is a routine matter to verify that $S_{(4, 6)}$ is isomorphic to a subdirect product of two copies
of $S_6$. So $\mathsf{V}(S_{(4, 6)})=\mathsf{V}(S_6)$.
By the main result of \cite{zrc} we deduce that $S_{(4, 5)}$, $S_{(4, 6)}$,
$S_{(4, 10)}$, $S_{(4, 18)}$, $S_{(4, 23)}$ and $S_{(4, 27)}$
are all finitely based.

It is easily verified that $S_{(4, 8)}$ is isomorphic to $S_c(ab)$.
By \cite[Proposition 3.2]{rjzl} we have that $S_{(4, 8)}$ is finitely based.
One can easily show that $S_{(4, 9)}$ is isomorphic to $S_c(a^3)$.
By \cite[Lemma 3.2]{wzr23} it follows that $S_{(4, 9)}$ is finitely based.

We know that $S_{(4, 29)}$ is isomorphic to the semiring $B_0$,
which is shown to be finitely based in \cite{shap23}.
So $S_{(4, 29)}$ is finitely based.

It is easy to see that $S_{(4, 37)}$ is the flat extension of an abelian group.
By \cite[Theorem 7.3]{jac08} we deduce that $S_{(4, 37)}$ is finitely based.
\end{proof}

Let $\bu=\bu_1+\bu_2+\cdots+\bu_n$ be an ai-semiring term such that $\bu_i\in X^+$ for all $1\leq i\leq n$,
and let $k\geq 1$.
Then $L_k(\bu)$ denotes the set of all $\bu_i$ in $\bu$ such that $\ell(\bu_i)=k$.
\begin{pro}\label{pro24070901}
$\mathsf{V}(S_{(4, 4)})$ is the ai-semiring variety determined by the identities
\begin{align}
x_1x_2x_3&\approx x_1x_2x_3+x_4;    \label{f0707001}\\
x^2 &\approx x^2+y;                 \label{f0707002}\\
x+xy &\approx x^2;                  \label{f0707003}\\
x+yx &\approx x^2;                   \label{f0707004}\\
x_1x_2+x_3x_4&\approx x_1x_2+x_3x_4+x_1x_4.    \label{f0707006}
\end{align}
\end{pro}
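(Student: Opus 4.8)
The plan is to prove the two inclusions $\mathsf{V}(S_{(4,4)})\subseteq\mathcal W$ and $\mathcal W\subseteq\mathsf{V}(S_{(4,4)})$, where $\mathcal W$ denotes the variety defined by \eqref{f0707001}--\eqref{f0707006}. The first is the routine direction: one checks that each of the five identities holds in $S_{(4,4)}$ by evaluating both sides under every assignment $X\to\{1,2,3,4\}$, whence $S_{(4,4)}\in\mathcal W$ and, $\mathcal W$ being a variety, $\mathsf{V}(S_{(4,4)})\subseteq\mathcal W$. The check becomes transparent once one notes that $S_{(4,4)}$ is the flat semiring $S(ab)$, with $1$ the top and absorbing element and $3\mapsto a$, $4\mapsto b$, $2\mapsto ab$. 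Under any substitution a word $\bw$ takes a non-top value exactly when $\bw$ is a single letter sent into $\{2,3,4\}$, or $\bw=xy$ with $x\mapsto 3$, $y\mapsto 4$ (value $2$), while every square and every word of length $\ge 3$ evaluates to the top $1$. From this description \eqref{f0707001} and \eqref{f0707002} say ``long words and squares are top'', \eqref{f0707003} and \eqref{f0707004} say ``a letter together with a length-two word sharing its head or tail is top'', and \eqref{f0707006} is the head--tail recombination rule; each is then immediate.

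For the harder inclusion I would show that every identity valid in $S_{(4,4)}$ is derivable from \eqref{f0707001}--\eqref{f0707006}. Using the reduction technique from the preliminaries it suffices to treat identities of the form $\bu\approx\bu+\bq$ with $\bq\in X^+$. The heart of the argument is a normalization lemma: using the five identities together with Lemma \ref{lem1}, every term $\bu$ can be derivably rewritten either to the canonical top term $x^2$, or to a normal form $\sum_{p\in P}p+\sum_{(h,t)\in H\times T}ht$ with $P,H,T\subseteq X$ satisfying $H\cap T=\emptyset$ and $P\cap(H\cup T)=\emptyset$. To reach this form one successively deletes any summand of length $\ge 3$ or any square summand, absorbing the whole term into $x^2$ via \eqref{f0707001}, \eqref{f0707002}; collapses to $x^2$ whenever a length-one summand meets the head- or tail-set of the length-two summands (\eqref{f0707003}, \eqref{f0707004}) or whenever the head-set and tail-set overlap (so that $ab+ca\approx ab+ca+a^2\approx x^2$ by \eqref{f0707006} followed by absorption); and finally closes the set of length-two summands under head--tail recombination by \eqref{f0707006}, which keeps $H$ and $T$ fixed and fills in all of $H\times T$.

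It then remains to check that, once $\bu$ is in normal form, validity of $\bu\approx\bu+\bq$ in $S_{(4,4)}$ forces $\bq$ to be \emph{already} a summand of $\bu$, after which $\bu+\bq\approx\bu$ is a syntactic identity of finite sets. If the normal form is $x^2$ this is clear, since adding anything to the top leaves the top. Otherwise I would analyse the substitutions $\varphi$ with $\varphi(\bu)$ non-top: these are exactly the $\varphi$ sending $P\to 2$, $H\to 3$, $T\to 4$ (remaining variables free), forcing the common value $2$ as soon as $H\times T\neq\emptyset$, and permitting the common value $3$ or $4$ when $\bu$ is a pure sum of letters. Validity demands $\varphi(\bq)=\varphi(\bu)$ for all such $\varphi$; since length $\ge 3$ words and squares evaluate to the top they are excluded, in the value-$2$ regime this pins $\bq$ to a letter in $P$ or a word in $H\times T$, and in the pure-letter regime the value-$3$ assignment pins the one-letter $\bq$ to $P$. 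In every case $\bq$ is a summand, as required.

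I expect the normalization lemma to be the main obstacle. The delicate points are verifying that the rewriting steps are genuine derivations in the sense of Lemma \ref{lem1} --- in particular realizing a length-$\ge 3$ word as a substitution instance of $x_1x_2x_3$ so that \eqref{f0707001} applies, and producing the square $a^2$ from \eqref{f0707006} when $H\cap T\neq\emptyset$ --- and confirming that the closure under \eqref{f0707006} terminates precisely at $H\times T$ without enlarging $H$ or $T$. The concluding semantic bookkeeping, separating the value-$2$ regime from the value-$3$ or $4$ regime, must be handled with some care but is otherwise routine.
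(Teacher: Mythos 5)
Your proposal is correct and follows essentially the same route as the paper's proof: the same four-way case split (a summand of length $\ge 3$; a letter meeting the support of a length-two summand; overlapping head- and tail-sets; the generic case), the same separating homomorphism $P\mapsto 2$, $H\mapsto 3$, $T\mapsto 4$, and the same use of \eqref{f0707006} to recombine heads and tails. Packaging the derivations as a normalization to $\sum_{p\in P}p+\sum_{(h,t)\in H\times T}ht$ (or to the absorbing term $x^2$) is only a cosmetic reorganization of the paper's direct case-by-case derivation of $\bu\approx\bu+\bq$.
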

\begin{proof}
It is easy to verify that $S_{(4, 4)}$ satisfies the identities (\ref{f0707001})--(\ref{f0707006}).
In the remainder it is enough to show that every ai-semiring identity of $S_{(4, 4)}$
is derivable from (\ref{f0707001})--(\ref{f0707006}) and the identities defining $\mathbf{AI}$.
Let $\bu \approx \bu+\bq$ be such an identity, where $\bu=\bu_1+\bu_2+\cdots+\bu_n$ and $\bu_i, \bq\in X^+$, $1 \leq i \leq n$.
If $\ell(\bu_i)\geq 3$ for some $\bu_i\in \bu$, then
\[
\bu \approx \bu+\bu_i \stackrel{(\ref{f0707001})}\approx \bu+\bu_i+\bq.
\]
This derives the identity $\bu \approx \bu+\bq$.
Suppose that $c(L_1(\bu))\cap c(L_2(\bu)) \neq\emptyset$. Then
either $x, xy\in \bu$ or $x, yx\in \bu$ for some $x, y\in X$.
If $x, xy\in \bu$, then
\[
\bu \approx \bu+x+xy \stackrel{(\ref{f0707003})}\approx \bu+x^2\stackrel{(\ref{f0707002})}\approx \bu+x^2+\bq.
\]
Similarly, if $x, yx\in \bu$, then
\[
\bu \approx \bu+x+yx \stackrel{(\ref{f0707004})}\approx \bu+x^2\stackrel{(\ref{f0707002})}\approx \bu+x^2+\bq.
\]
So we can derive $\bu \approx \bu+\bq$.
If $h(L_2(\bu)) \cap t(L_2(\bu))\neq \emptyset$, then
$h(\bu_i)=t(\bu_j)$ for some $\bu_i, \bu_j \in L_2(\bu)$. Now we have
\begin{align*}
\bu
&\approx \bu+\bu_i+\bu_j\\
&\approx \bu+h(\bu_i)t(\bu_i)+h(\bu_j)t(\bu_j)&&\\
&\approx \bu+h(\bu_i)t(\bu_i)+h(\bu_j)t(\bu_j)+h(\bu_i)t(\bu_j)&&(\text{by}~(\ref{f0707006}))\\
&\approx \bu+h(\bu_i)t(\bu_i)+h(\bu_j)t(\bu_j)+h(\bu_i)h(\bu_i)\\
&\approx \bu+h(\bu_i)t(\bu_i)+h(\bu_j)t(\bu_j)+h(\bu_i)h(\bu_i)+\bq. &&(\text{by}~(\ref{f0707002}))
\end{align*}
This proves $\bu\approx \bu+\bq$.
Finally, assume that $\ell(\bu_i)\leq 2$ for all $\bu_i\in \bu$,
$c(L_1(\bu))\cap c(L_2(\bu)) =\emptyset$, and $h(L_2(\bu)) \cap t(L_2(\bu))=\emptyset$.
Consider the semiring homomorphism $\varphi: P_f(X^+)\to S_{(4, 4)}$ defined by the rule:
$\varphi(x)=2$ if $x \in c(L_1(\bu))$, $\varphi(x)=3$ if $x \in h(L_2(\bu))$,
$\varphi(x)=4$ if $x \in t(L_2(\bu))$, and $\varphi(x)=1$ otherwise.
It is easy to see that $\varphi(\bu)=2$ and so $\varphi(\bq)=2$.
This implies that $\ell(\bq)\leq 2$ and $c(\bq)\subseteq c(\bu)$.
If $\ell(\bq)=1$, then $\bq\in L_1(\bu)$ and so $\bu \approx \bu+\bq$ is trivial.
If $\ell(\bq)=2$ then $h(\bq)\in h(L_2(\bu))$ and $t(\bq)\in t(L_2(\bu))$.
This implies that $h(\bq)=h(\bu_i)$ and $t(\bq)=t(\bu_j)$ for some $\bu_i, \bu_j \in \bu$.
Furthermore, we have
\begin{align*}
\bu
&\approx \bu+\bu_i+\bu_j\\
&\approx \bu+h(\bu_i)t(\bu_i)+h(\bu_j)t(\bu_j)&&\\
&\approx \bu+h(\bu_i)t(\bu_i)+h(\bu_j)t(\bu_j)+h(\bu_i)t(\bu_j)&&(\text{by}~(\ref{f0707006}))\\
&\approx \bu+h(\bu_i)t(\bu_i)+h(\bu_j)t(\bu_j)+h(\bq)t(\bq)\\
&\approx \bu+h(\bu_i)t(\bu_i)+h(\bu_j)t(\bu_j)+\bq. &&
\end{align*}
This derives the identity $\bu\approx \bu+\bq$.
\end{proof}

\begin{remark}
Proposition \ref{pro24070901} shows that $S_{(4, 4)}$ is finitely based.
On the other hand, it is easy to see that $S_{(4, 4)}$ is isomorphic to $S(ab)$,
whose finite basis property has been established by Gao and Ren \cite{gr}, using an entirely unrelated proof.
\end{remark}

\begin{pro}
$\mathsf{V}(S_{(4, 14)})$ is the ai-semiring variety determined by the identities
\begin{align}
xy &\approx yx;                      \label{f07070100}\\
xy &\approx xy+x^2;                 \label{f0707011}\\
x+xy &\approx x^3;                \label{f0707012}\\
x_1x_2x_3&\approx x_1x_2x_3+x_4;    \label{f0707010}\\
xy+yz&\approx xy+yz+xz.            \label{f07070101}
\end{align}
\end{pro}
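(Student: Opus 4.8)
The plan is to mirror the syntactic argument of Proposition~\ref{pro24070901}. One first checks that $S_{(4, 14)}$ satisfies \eqref{f07070100}--\eqref{f07070101}, which is a routine finite verification; the feature to isolate is that $S_{(4, 14)}$ is a commutative flat semiring with top element (and multiplicative zero) $1$, in which any homomorphism $\varphi$ evaluates a word $\bw$ by the rules: $\varphi(\bw)=1$ whenever $\ell(\bw)\ge 3$; for $\ell(\bw)=2$ one has $\varphi(\bw)=2$ exactly when the two letters of $\bw$ are sent to a common element of $\{3,4\}$, and $\varphi(\bw)=1$ otherwise; and for $\ell(\bw)=1$, $\varphi(\bw)$ is the image of the letter. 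Since $1$ is the top and $2,3,4$ are pairwise incomparable atoms, a sum $\varphi(\bu_1+\cdots+\bu_n)$ equals an atom $c$ precisely when every $\varphi(\bu_i)=c$, and equals $1$ otherwise. By the reduction recalled before Lemma~\ref{lem1}, it then suffices to derive every identity of the form $\bu\approx\bu+\bq$ satisfied by $S_{(4, 14)}$, where $\bu=\bu_1+\cdots+\bu_n$ and $\bu_i,\bq\in X^+$.

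First I would clear the easy configurations. If some $\bu_i$ has $\ell(\bu_i)\ge 3$, then \eqref{f0707010} gives $\bu\approx\bu+\bu_i\approx\bu+\bu_i+\bq\approx\bu+\bq$, so we may assume every $\bu_i$ has length at most $2$. If $c(L_1(\bu))\cap c(L_2(\bu))\ne\emptyset$, say a length-one summand $x$ also occurs in a length-two summand $\bu_j$, then \eqref{f0707012} rewrites $x+\bu_j$ to $x^3$, which absorbs $\bq$ via \eqref{f0707010}; hence $\bu\approx\bu+\bq$. So we may further assume $c(L_1(\bu))\cap c(L_2(\bu))=\emptyset$. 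In this situation the colouring sending every length-one summand to $2$ and every variable occurring in a length-two summand to $3$ realises $\varphi(\bu)=2$; as a word of length $\ge 3$ is evaluated to $1\not\le 2$, satisfaction of $\bu\approx\bu+\bq$ forces $\ell(\bq)\le 2$.

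It remains to read off the shape of $\bq$ using tailored homomorphisms. If $\ell(\bq)=1$, say $\bq=z$, then whenever $z\notin L_1(\bu)$ one can still realise $\varphi(\bu)=2$ while forcing $\varphi(z)=3$ (using $z\in c(L_2(\bu))$ or $z\notin c(\bu)$), giving $3\not\le 2$; hence $z$ is a length-one summand and $\bu\approx\bu+\bq$ is trivial. If $\bq=z^2$, realising $\varphi(\bu)=2$ with $z\mapsto 2$ contradicts $\varphi(z^2)=1\not\le 2$ unless $z$ occurs in a length-two summand $zv$, in which case \eqref{f0707011} yields $\bu\approx\bu+zv\approx\bu+zv+z^2\approx\bu+\bq$. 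The substantive case is $\bq=zw$ with $z\ne w$. The same kind of evaluation first forces $z,w\in c(L_2(\bu))$. Regarding the length-two summands of $\bu$ as the edges of a graph on $c(L_2(\bu))$, I would then argue that $z$ and $w$ lie in the same connected component: otherwise, colouring $z$'s component by $3$ and $w$'s by $4$ (and all remaining variables so as to keep $\varphi(\bu)=2$) would give $\varphi(zw)=1$ with $\varphi(\bu)=2$, contradicting satisfaction. Thus $z$ and $w$ are joined by a path $z=x_0,x_1,\ldots,x_k=w$ with each $x_ix_{i+1}\in L_2(\bu)$.

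The main obstacle is the derivation in this last case, which is an induction along the path using \eqref{f07070101}. From the summands $x_0x_1,x_1x_2$ of $\bu$, one application of \eqref{f07070101} (via Lemma~\ref{lem1}) gives $\bu\approx\bu+x_0x_2$; feeding the freshly produced summand $x_0x_2$ together with $x_2x_3\in L_2(\bu)$ into \eqref{f07070101} again gives $\bu\approx\bu+x_0x_2+x_0x_3$, and iterating reaches $\bu\approx\bu+x_0x_2+\cdots+x_0x_k$. Adding $\bq=x_0x_k$ to both sides and using that $x_0x_k$ already occurs on the right then collapses this to $\bu\approx\bu+\bq$, as required. The care needed is to ensure that each application of \eqref{f07070101} acts on genuine summands and to match the graph connectivity extracted semantically with this syntactic chain of rewrites. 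Throughout, commutativity \eqref{f07070100} lets one treat words as multisets of variables; it also makes \eqref{f0707011} a formal consequence of \eqref{f07070101} (substitute to identify the two outer variables), so the genuinely independent work is concentrated in the length-two analysis of $\bq$.
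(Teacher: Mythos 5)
Your proposal is correct and follows essentially the same route as the paper's proof: the same case split on $\ell(\bu_i)\ge 3$ and $c(L_1(\bu))\cap c(L_2(\bu))$, the same evaluation homomorphisms to pin down $\ell(\bq)\le 2$ and $c(\bq)\subseteq c(L_2(\bu))$, and the same graph-connectivity argument with iterated applications of \eqref{f07070101} along a path. The only (harmless) differences are that you isolate $\bq=z^2$ as a separate subcase handled by \eqref{f0707011} and observe that \eqref{f0707011} is a consequence of \eqref{f07070100} and \eqref{f07070101}, details the paper leaves implicit.
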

\begin{proof}
It is easy to verify that $S_{(4, 14)}$ satisfies the identities (\ref{f07070100})--(\ref{f07070101}).
In the remainder it is enough to show that every ai-semiring identity of $S_{(4, 14)}$
is derivable from (\ref{f07070100})--(\ref{f07070101}) and the identities defining $\mathbf{AI}$.
Let $\bu \approx \bu+\bq$ be such an identity, where $\bu=\bu_1+\bu_2+\cdots+\bu_n$.
Since (\ref{f07070100}) is satisfied by $S_{(4, 14)}$, we may assume that $\bu_i, \bq\in X_c^+$, $1 \leq i \leq n$.
If $\ell(\bu_i)\geq 3$ for some $\bu_i\in \bu$, then
\[
\bu \approx \bu+\bu_i \stackrel{(\ref{f0707010})}\approx \bu+\bu_i+\bq.
\]
This derives the identity $\bu \approx \bu+\bq$.
If $c(L_1(\bu))\cap c(L_2(\bu)) \neq\emptyset$,
then $x, xy\in \bu$ for some $x, y \in X$.
So we have
\[
\bu \approx \bu+x+xy \stackrel{(\ref{f0707012})}\approx \bu+x^3\stackrel{(\ref{f0707010})}\approx \bu+x^3+\bq.
\]
This implies $\bu \approx \bu+\bq$.
Now assume that $\ell(\bu_i)\leq 2$ for all $\bu_i\in \bu$
and that $c(L_1(\bu))\cap c(L_2(\bu))=\emptyset$.
Let $\varphi: P_f(X^+)\to S_{(4, 14)}$ be a semiring homomorphism defined by
$\varphi(x)=2$ if $x \in c(L_1(\bu))$, $\varphi(x)=3$ if $x \in c(L_2(\bu))$, and $\varphi(x)=1$ otherwise.
It is easy to see that $\varphi(\bu)=2$ and so $\varphi(\bq)=2$.
This implies that $\ell(\bq)\leq 2$ and $c(\bq)\subseteq c(\bu)$.
If $\ell(\bq)=1$, then $\bq\in L_1(\bu)$ and so $\bu \approx \bu+\bq$ is trivial.

Assume that $\ell(\bq)=2$. Then $c(\bq)\subseteq c(L_2(\bu))$. Let us write $\bq=xy$.
Now $L_2(\bu)$ can be thought of as a graph
whose vertex set is $c(L_2(\bu))$ and edge set consists of $\{x_2, y_2\}$ if $x_2y_2 \in L_2(\bu)$.
We have that both $x$ and $y$ are vertices of this graph.
Suppose by way of contradiction that there is no path connecting $x$ and $y$ in this graph.
Consider the semiring homomorphism $\psi: P_f(X^+)\to S_{(4, 14)}$ defined by
$\psi(z)=3$ if $z$ and $x$ are in the same connected component,
$\psi(z)=4$ for all remaining variables $z$ in $c(L_2(\bu))$,
and $\psi(z)=2$ if $z\in c(L_1(\bu))$.
Then $\psi(\bu)=2$ and $\psi(\bq)=1$, a contradiction.
It follows that there is a path connecting $x$ and $y$.
So we may assume that $xz_1, z_1z_2, \ldots, z_kz_{k+1}, z_{k+1}y\in \bu$
for some $z_1, z_2, \ldots, z_k, z_{k+1}\in X$.
We have
\[
\bu\approx \bu+xz_1+z_1z_2 \stackrel{(\ref{f07070101})}\approx \bu+xz_1+z_1z_2+xz_2.
\]
This implies the identity
\begin{equation}\label{f24071001}
\bu\approx \bu+xz_2.
\end{equation}
Furthermore, we obtain
\[
\bu\stackrel{(\ref{f24071001})}\approx \bu+xz_2\approx\bu+xz_2+z_2z_3\stackrel{(\ref{f07070101})}\approx\bu+xz_2+z_2z_3+xz_3.
\]
This proves the identity
\[
\bu\approx \bu+xz_3.
\]
We can continue this process and finally obtain
\[
\bu\approx \bu+xy.
\]
This derives $\bu\approx \bu+\bq$.
\end{proof}

In the remainder of this section we shall show that $S_{(4, 20)}$ is finitely based.
It is easy to see that $S_{(4, 20)}$ contains a copy of $S_{10}$,
which is the flat extension of a group of order two.
The following result, which follows from \cite[Corollary 2.13]{rz16} and its proof,
provides a solution of the equational problem for $S_{10}$.
Let $\bp$ be a word in $X^+$. Then $r(\bp)$ denotes the set
\[
\{x\in X \mid m(x, \bp)~\textrm{is an odd number}\}.
\]

\begin{lem}\label{lem24062901}
Let $\bu\approx \bu+\bq$ be an ai-semiring identity such that
$\bu=\bu_1+\bu_2+\cdots+\bu_n$, where $\bu_i, \bq \in X^+$, $1\leq i \leq n$.
Then $\bu\approx \bu+\bq$ is satisfied by $S_{10}$ if and only if
$c(\bq)\subseteq c(\bu)$ and $r(\bq)=r(\bu_{i_1}\bu_{i_2}\cdots \bu_{i_{3^k}})$
for some $\bu_{i_1}, \bu_{i_2}, \ldots, \bu_{i_{3^k}} \in \bu$.
\end{lem}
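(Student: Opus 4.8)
The plan is to reduce the semantic condition ``$\bu\approx\bu+\bq$ holds in $S_{10}$'' to a single linear-algebra statement over the field $\mathbb{F}_2$, and then translate that statement into the concatenation form in the lemma. Write the three elements of $S_{10}$ as $\{1,a,0\}$, where $\{1,a\}$ is the two-element group (so $a^2=1$) and $0$ is the top element of the additive order, absorbing under multiplication. A homomorphism $\varphi\colon P_f(X^+)\to S_{10}$ is determined by the partition of $X$ into the preimages of $1$, $a$ and $0$, and I would first record the elementary computation that, for a word $\bw\in X^+$, one has $\varphi(\bw)=0$ whenever some variable of $\bw$ lies in $\varphi^{-1}(0)$, and otherwise $\varphi(\bw)=a^{t}$ with $t=\sum_{x\in\varphi^{-1}(a)}m(x,\bw)$; in particular $\varphi(\bw)\in\{1,a\}$ is governed by the parity $|\varphi^{-1}(a)\cap r(\bw)|$. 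Since $\bu\approx\bu+\bq$ holds iff $\varphi(\bq)\le\varphi(\bu)$ for every $\varphi$, and in the flat order $s\le t$ means $s=t$ or $t=0$, the identity holds iff for every $\varphi$ with $\varphi(\bu)\ne0$ one has $\varphi(\bq)=\varphi(\bu)$.

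Next I would dispose of the inclusion clause and of variables mapped to $0$. If some $x\in c(\bq)\setminus c(\bu)$, the homomorphism sending $c(\bu)$ to $1$ and $x$ to $0$ gives $\varphi(\bu)=1$ but $\varphi(\bq)=0$, so the identity fails; hence $c(\bq)\subseteq c(\bu)$ is necessary, and I may assume it. Granting this, any $\varphi$ with $\varphi(\bu)\ne0$ sends all $\varphi(\bu_i)$ to one common nonzero value, so no variable of $c(\bu)$ (hence of $\bq$) maps to $0$; the relevant homomorphisms are thus parametrized by subsets $G\subseteq c(\bu)$, namely $G=\varphi^{-1}(a)\cap c(\bu)$. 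Set $V=\mathbb{F}_2^{c(\bu)}$, let $v_i\in V$ be the characteristic vector of $r(\bu_i)$, let $q\in V$ be that of $r(\bq)$, and let $\vec G$ be the characteristic vector of $G$; then $\varphi(\bu_i)$ equals $1$ or $a$ according as $\langle\vec G,v_i\rangle$ is $0$ or $1$, and likewise for $\bq$. The condition becomes: for every $\vec G\in V$, if $\langle\vec G,v_i\rangle$ takes a common value $c\in\{0,1\}$ for all $i$, then $\langle\vec G,q\rangle=c$.

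The key step is a homogenization that handles the cases $\varphi(\bu)=1$ and $\varphi(\bu)=a$ at once. I would pass to $\widehat V=V\times\mathbb{F}_2$ with its standard nondegenerate pairing and set $\hat v_i=(v_i,1)$, $\hat q=(q,1)$. Writing $\hat G=(\vec G,c)$, the hypothesis ``$\langle\vec G,v_i\rangle=c$ for all $i$'' reads exactly as $\langle\hat G,\hat v_i\rangle=0$ for all $i$, and the conclusion ``$\langle\vec G,q\rangle=c$'' reads as $\langle\hat G,\hat q\rangle=0$. Hence the identity holds iff every $\hat G$ orthogonal to all $\hat v_i$ is orthogonal to $\hat q$, i.e. iff $\hat q\in(\widehat W^{\perp})^{\perp}=\widehat W$, where $\widehat W=\operatorname{span}\{\hat v_i\}$; finite-dimensionality of $\widehat V$ is what licenses the double-complement identity. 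So, under $c(\bq)\subseteq c(\bu)$, the identity holds iff $(q,1)=\sum_{i\in S}(v_i,1)$ for some $S\subseteq\{1,\ldots,n\}$, equivalently iff $q=\sum_{i\in S}v_i$ with $|S|$ odd.

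Finally I would translate the odd-cardinality subset sum into the stated form. Using $\vec r(\bw_1\bw_2)=\vec r(\bw_1)+\vec r(\bw_2)$ over $\mathbb{F}_2$, an odd subset sum $q=\sum_{i\in S}v_i$ says $r(\bq)=r(\prod_{i\in S}\bu_i)$, a concatenation of an odd number of factors; conversely $r(\bu_{i_1}\cdots\bu_{i_{3^k}})$ reduces mod $2$ to the sum of those $v_{i_j}$ occurring an odd number of times, a subset sum whose size has the parity of $3^k$, hence is odd. To match the exact exponent, I would pad: repeating any one word contributes $\vec r(\bu_1\bu_1)=0$, so a concatenation of odd length $m$ can be lengthened in steps of $2$ to any odd length $\ge m$, and since $3^k\to\infty$ through odd values one reaches some $3^k\ge m$ with $3^k\equiv m\pmod 2$. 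The main obstacle is the uniform treatment of the two cases $\varphi(\bu)=1$ and $\varphi(\bu)=a$: the value-$a$ case is precisely what forces the number of factors to be odd, and the homogenization device is what packages both cases into one self-dual linear condition, avoiding a clumsy split into an even-parity and an affine (odd-parity) subproblem.
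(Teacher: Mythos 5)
Your argument is correct, but note that the paper does not actually prove this lemma: it is obtained by citation, ``which follows from \cite[Corollary 2.13]{rz16} and its proof,'' so any self-contained argument is necessarily a different route. Your route is sound at every step: restricting attention to homomorphisms with $\varphi(\bu)\neq 0$ (which, by flatness, forces all of $c(\bu)$ into the group part and all $\varphi(\bu_i)$ to coincide), encoding $r(\bu_i)$ and $r(\bq)$ as vectors over $\mathbb{F}_2$, homogenizing with an extra coordinate so that the cases $\varphi(\bu)=1$ and $\varphi(\bu)=a$ collapse into the single condition $\hat q\in\bigl(\widehat W^{\perp}\bigr)^{\perp}=\widehat W$, and reading off that $q$ must be a sum of an \emph{odd} number of the $v_i$. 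The final translation between odd subset sums and concatenations of exactly $3^{k}$ summands, via padding with repeated pairs $\bu_j\bu_j$, is also correct and in fact demystifies the exponent $3^{k}$ in the statement: it is merely a cofinal family of odd integers, an artifact of the cited source's connection with $x^{3}\approx x$. What your approach buys is a transparent, purely linear-algebraic proof that makes the role of parity explicit; what the paper's approach buys is brevity, at the cost of sending the reader to an external corollary \emph{and its proof}. Two minor points a referee might ask you to spell out: the double-orthocomplement step needs nondegeneracy of the dot product on a finite-dimensional $\mathbb{F}_2$-space (which you flag), and for fixed $\vec G$ the common value $c$, when it exists, is unique because $n\ge 1$, which is what makes your quantification over all pairs $(\vec G,c)$ equivalent to the original condition. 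Neither is a gap.
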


\begin{pro}
$\mathsf{V}(S_{(4, 20)})$ is the ai-semiring variety defined by the identities
\begin{align}
x^4& \approx x^2; \label{f24062901}\\
xy& \approx yx; \label{f24062902}\\
xy^2& \approx xy^2+x; \label{f24062903}\\
x_1x_2+x_3+x_4& \approx x_1x_2+x_3+x_4+x_1x_2x_3x_4. \label{f24062904}
\end{align}
\end{pro}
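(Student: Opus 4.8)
The plan is to follow the syntactic strategy of the two preceding propositions, using Lemma \ref{lem24062901} to control the ``group part'' of the argument. First I would verify by inspection of the multiplication table that $S_{(4,20)}$ satisfies (\ref{f24062901})--(\ref{f24062904}); the only delicate point is (\ref{f24062904}), whose content is $x_1x_2x_3x_4\le x_1x_2+x_3+x_4$, and where the sole critical case $x_1x_2=x_3=x_4=:t$ forces $t\in\{1,3,4\}$ since $2$ never occurs as a product in $S_{(4,20)}$, and there $t^3=t$. For the converse it is enough to derive every identity $\bu\approx\bu+\bq$ of $S_{(4,20)}$ from (\ref{f24062901})--(\ref{f24062904}) and the laws of $\mathbf{AI}$, where $\bu=\bu_1+\cdots+\bu_n$ and $\bu_i,\bq\in X^+$. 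By (\ref{f24062902}) I may assume all words are commutative, and by (\ref{f24062901}) I may reduce every exponent to lie in $\{1,2,3\}$.

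Next I would read off the semantic content of the identity. One checks that $S_{(4,20)}$ is the null extension $(S_{10})_{ne}$ of the flat extension $S_{10}$ of the two-element group (the element $2$ is the adjoined null), so Proposition \ref{lnn4} gives $\mathsf{V}(S_{(4,20)})=\mathsf{V}(S_{10})\vee\mathsf{V}(T_2)$, and hence $\bu\approx\bu+\bq$ holds iff it holds in both $S_{10}$ and $T_2$. The $T_2$-part (Lemma \ref{nlemma1}(6)) splits the argument into two cases: either all $\bu_i$ have length $1$, in which case $\bq=\bu_i$ for some $i$ and the identity is trivial, or some $\bu_j$ has $\ell(\bu_j)\ge2$. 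In the latter case Lemma \ref{lem24062901} (the $S_{10}$-part) gives $c(\bq)\subseteq c(\bu)$ together with $r(\bq)=r(\bu_{i_1}\cdots\bu_{i_{3^k}})$ for suitable summands. Since the $r$-value of a product is the symmetric difference of the $r$-values of its factors and $3^k$ is odd, this is equivalent to $r(\bq)=\bigtriangleup_{i\in I}r(\bu_i)$ for some $I$ of odd cardinality; the coverage $c(\bq)\subseteq c(\bu)$ can also be seen directly by sending any offending variable to the null element $2$.

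The heart of the proof is the main case. Here I would fix, using the odd set $I$ and then doubling up further summands so as to cover all of $c(\bq)$, a list $\bu_{i_1},\dots,\bu_{i_N}$ with $N$ odd such that the product $P:=\bu_j^{\,2}\,\bu_{i_1}\cdots\bu_{i_N}$ satisfies $r(P)=r(\bq)$ and $c(P)\supseteq c(\bq)$. The key step is $\bu\approx\bu+P$, proved by iterating (\ref{f24062904}): starting from the seed summand $\bu_j$ of length $\ge2$, each application of (\ref{f24062904}) (with $x_1x_2$ instantiated to the word absorbed so far and $x_3,x_4$ to two further summands of $\bu$) appends two factors to the absorbed word while keeping its length $\ge2$, so after $(N+1)/2$ steps all of $P$ is absorbed. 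Finally, since $r(P)=r(\bq)$ and $c(P)\supseteq c(\bq)$, I would collapse $P$ down to $\bq$: repeated use of (\ref{f24062903}) in the form $x\le xy^2$ deletes surplus squared factors and lowers an odd exponent from $3$ to $1$, while (\ref{f24062901}) normalises even exponents, giving $P\approx P+\bq$. Combining $\bu\approx\bu+P$ with $P\approx P+\bq$ yields $\bu\approx\bu+P\approx\bu+P+\bq\approx\bu+\bq$.

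The main obstacle is twofold. On the syntactic side it is the product-absorption step $\bu\approx\bu+P$: identity (\ref{f24062904}) can be applied only once a length-$\ge2$ summand is present, so the whole construction hinges on the seed $\bu_j$ and on the bookkeeping, via Lemma \ref{lem1}, that keeps the absorbed word of length $\ge2$ and both appended factors simultaneously available as summands at every stage. On the combinatorial side it is the passage from Lemma \ref{lem24062901} to the usable parity statement $r(\bq)=\bigtriangleup_{i\in I}r(\bu_i)$ together with the simultaneous covering of $c(\bq)$; this is the analogue for the group $\mathbb{Z}_2$ of the connectivity argument used for $S_{(4,14)}$, and it is exactly here that one must check that the freedom of choosing $3^k$ factors with repetition suffices to realise any odd symmetric-difference combination while still covering every variable of $\bq$.
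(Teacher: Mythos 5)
Your proposal is correct and follows essentially the same route as the paper's proof: extract $\ell(\bu_i)\ge 2$ from $T_2$ and the parity/coverage conditions from $S_{10}$ via Lemma~\ref{lem24062901}, absorb a long product with an even ``seed'' exponent by iterating (\ref{f24062904}), normalise with (\ref{f24062901})--(\ref{f24062902}), and strip the surplus squares with (\ref{f24062903}). The only differences are cosmetic: the paper absorbs $\bu_i^{3^\ell+1}\bu_{i_1}^3\cdots\bu_{i_{3^\ell}}^3\bu_1^2\cdots\bu_n^2$ (covering all of $c(\bu)$) rather than your leaner $P$, and works with the $3^\ell$-fold products of Lemma~\ref{lem24062901} directly instead of your odd-symmetric-difference reformulation.
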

\begin{proof}
It is easy to verify that $S_{(4, 20)}$ satisfies the identities
(\ref{f24062901})--(\ref{f24062904}).
In the remainder we only need to show that every nontrivial ai-semiring identity of $S_{(4, 20)}$
is derivable from (\ref{f24062901})--(\ref{f24062904}) and the identities defining $\mathbf{AI}$.
Let $\bu \approx \bu+\bq$ be such an identity, where $\bu=\bu_1+\bu_2+\cdots+\bu_n$ and $\bu_i, \bq\in X^+_c$, $1 \leq i \leq n$.
Since $T_2$ is isomorphic to $\{1, 4\}$, we have that $T_2$ satisfies $\bu \approx \bu+\bq$ and so
$\ell(\bu_i)\geq 2$ for some $\bu_i \in \bu$.
On the other hand, it is easy to see that $S_{10}$ is isomorphic to $\{1, 3, 4\}$
and so $S_{10}$ satisfies $\bu \approx \bu+\bq$.
By Lemma \ref{lem24062901} we obtain that
$c(\bq)\subseteq c(\bu)$ and $r(\bq)=r(\bu_{i_1}\bu_{i_2}\cdots \bu_{i_{3^k}})$
for some $\bu_{i_1}, \bu_{i_2}, \ldots, \bu_{i_{3^k}} \in \bu$.
Let $c(\bq)=\{x_1, \ldots, x_r, y_1, \ldots, y_s\}$, where
$m(x_i, \bq)$ is an odd number and $m(y_j, \bq)$ is an even number
for all $1\leq i \leq r$ and $1\leq j \leq s$.
By the identities (\ref{f24062901}) and (\ref{f24062902}) we deduce
\begin{equation}\label{f24062910}
\bq \approx x_1^{k_1}x_2^{k_2}\cdots x_r^{k_r}y_1^2y_2^2\cdots y_s^2,
\end{equation}
where $k_i=1$ or $3$ for all $1\leq i \leq r$.

Now we have
\[
\bu\approx \bu+\bu_i+\bu_i+\bu_{i_1}\stackrel{(\ref{f24062904})}\approx \bu+\bu_i+\bu_i+\bu_{i_1}+\bu_i^2\bu_{i_1}.
\]
This proves the identity
\begin{equation}\label{f2407110001}
\bu\approx \bu+\bu_i^2\bu_{i_1}.
\end{equation}
Next, we can deduce
\[
\bu \stackrel{(\ref{f2407110001})}\approx \bu+\bu_i^2\bu_{i_1}\approx \bu+\bu_i+\bu_i^2\bu_{i_1}+\bu_{i_2}
\stackrel{(\ref{f24062904})}\approx \bu+\bu_i^2\bu_{i_1}\approx \bu+\bu_i+\bu_i^2\bu_{i_1}+\bu_{i_2}+\bu_i^3\bu_{i_1}\bu_{i_2}.
\]
This implies the identity
\[
\bu\approx \bu+\bu_i^3\bu_{i_1}\bu_{i_2}.
\]
Repeat this process and one can obtain
\begin{equation}\label{f2407110002}
\bu\approx \bu+\bu_i^{3^\ell+1}\bu_{i_1}\bu_{i_2}\cdots\bu_{i_{3^\ell}}.
\end{equation}
Furthermore, we have

\begin{align*}
\bu
&\approx \bu+\bu_i^{3^\ell+1}\bu_{i_1}^3\bu_{i_2}^3\cdots\bu_{i_{3^\ell}}^3&&(\text{by}~(\ref{f2407110002}))\\
&\approx \bu+\bu_i^{3^\ell+1}\bu_{i_1}^3\bu_{i_2}^3\cdots\bu_{i_{3^\ell}}^3\bu_1^2\bu_2^2\cdots \bu_n^2
&&  (\text{by}~  (\ref{f24062904}))\\
&\approx \bu+x_1^{k_1}x_2^{k_2}\cdots x_r^{k_r}y_1^2y_2^2\cdots y_s^2\bp^2&&(\text{by}~  (\ref{f24062901}), (\ref{f24062902}))\\
&\approx \bu+x_1^{k_1}x_2^{k_2}\cdots x_r^{k_r}y_1^2y_2^2\cdots y_s^2\bp^2+x_1^{k_1}x_2^{k_2}\cdots x_r^{k_r}y_1^2y_2^2\cdots y_s^2
&&  (\text{by}~  (\ref{f24062903}))\\
&\approx \bu+x_1^{k_1}x_2^{k_2}\cdots x_r^{k_r}y_1^2y_2^2\cdots y_s^2\bp^2+\bq.&&  (\text{by}~  (\ref{f24062910}))
\end{align*}
This derives $\bu\approx \bu+\bq$.
\end{proof}

\begin{remark}
It is a routine matter to verify that $S_{(4, 20)}$ is isomorphic to
a subdirect product of $S_{10}$ and $T_2$. So we have
\[
\mathsf{V}(S_{(4, 20)})=\mathsf{V}(S_{10}, T_2).
\]
\end{remark}

\section{Equational basis of $S_{(4, 15)}$, $S_{(4, 41)}$ and $S_{(4, 42)}$}
In this section we present finite equational basis for $S_{(4, 15)}$, $S_{(4, 41)}$ and $S_{(4, 42)}$.
It is easily verified that all of these semirings contain copies of $S_2$.
This requires that we be able to provide a solution of the equational problem for $S_2$.
\begin{lem}\label{lem24061201}
Let $\bu\approx \bu+\bq$ be an ai-semiring identity such that $\bu=\bu_1+\bu_2+\cdots+\bu_n$, where $\bu_i, \bq \in X^+$, $1\leq i \leq n$.
Then $\bu\approx \bu+\bq$ is satisfied by $S_2$ if and only if $\bu$ and $\bq$ satisfy one of the following conditions:
\begin{itemize}
\item[$(1)$] $\ell(\bu_i)\geq 3$ for some $\bu_i \in \bu$;

\item[$(2)$] $c(L_1(\bu))\cap c(L_2(\bu)) \neq\emptyset$;

\item[$(3)$] $\ell(\bu_i)\leq 2$ for all $\bu_i \in \bu$, $c(L_1(\bu))\cap c(L_2(\bu))=\emptyset$ and $\ell(\bq)\leq 2$.
             If $\ell(\bq)=1$, then $\bu\approx \bu+\bq$ is trivial. If $\ell(\bq)=2$, then $c(\bq)\subseteq c(L_2(\bu))$.
\end{itemize}
\end{lem}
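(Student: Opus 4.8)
The plan is to work semantically with an explicit description of $S_2$. Up to isomorphism $S_2$ is the three-element flat semiring carried by $\{a, a^2, \infty\}$ in which $\infty$ is both the additive top and the multiplicative (absorbing) zero, $a\cdot a = a^2$, and every product of total length at least three collapses to $\infty$; it may be realized as the subalgebra $\{1,2,4\}$ of $S_{(4,2)}$, equivalently as $S_c(a^2)$. The identity $\bu\approx\bu+\bq$ holds in $S_2$ precisely when $\varphi(\bq)\leq\varphi(\bu)$ for every homomorphism $\varphi\colon P_f(X^+)\to S_2$, where $\varphi(\bu)=\sum_i\varphi(\bu_i)$. The whole proof rests on one arithmetic observation: if no variable of a word $\bw$ is sent to $\infty$, then $\varphi(\bw)$ is governed by the total weight $t$ obtained by assigning weight $1$ to each occurrence mapped to $a$ and weight $2$ to each occurrence mapped to $a^2$, namely $\varphi(\bw)=a$ if $t=1$, $\varphi(\bw)=a^2$ if $t=2$, and $\varphi(\bw)=\infty$ if $t\geq 3$; moreover, in the semilattice $\{a,a^2,\infty\}$ the elements $a$ and $a^2$ are incomparable atoms below $\infty$. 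Consequently $\varphi(\bu)\neq\infty$ if and only if all the summands $\varphi(\bu_i)$ share a common value $c\in\{a,a^2\}$, and then $\varphi(\bq)\leq\varphi(\bu)$ forces $\varphi(\bq)=c$.

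For the sufficiency direction I would treat the three conditions separately. If $(1)$ holds, some $\bu_i$ has length $\geq 3$, so under any $\varphi$ its weight is $\geq 3$ and $\varphi(\bu_i)=\infty$, whence $\varphi(\bu)=\infty$ and the inequality $\varphi(\bq)\leq\varphi(\bu)$ is automatic. If $(2)$ holds, a variable $x$ occurs both as a length-one summand and inside a length-two summand $\bu_j$; a short case check on $\varphi(x)\in\{a,a^2,\infty\}$ shows that $\varphi(x)$ and $\varphi(\bu_j)$ can never equal a common atom value, so again $\varphi(\bu)=\infty$. Thus in cases $(1)$ and $(2)$ there is simply no evaluation with $\varphi(\bu)\neq\infty$, and the identity holds vacuously. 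In case $(3)$ I would verify the inequality directly: whenever $\varphi(\bu)=c\neq\infty$, the constraints force every variable of $c(L_2(\bu))$ to $a$ and every variable of $c(L_1(\bu))$ to $a^2$ when $c=a^2$ (and $L_2(\bu)=\emptyset$ with all length-one summands mapped to $a$ when $c=a$); the sub-conditions on $\bq$ (that $\bq$ is itself a summand when $\ell(\bq)=1$, and that $c(\bq)\subseteq c(L_2(\bu))$ when $\ell(\bq)=2$) then give $\varphi(\bq)=c$ exactly.

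For the necessity direction, assume the identity holds and that neither $(1)$ nor $(2)$ holds, so all $\bu_i$ have length $\leq 2$ and $c(L_1(\bu))\cap c(L_2(\bu))=\emptyset$; the remaining content of $(3)$ must be deduced. The key step is to exhibit a single separating homomorphism $\varphi$ sending every variable of $c(L_1(\bu))$ to $a^2$, every variable of $c(L_2(\bu))$ to $a$, and every other variable to $\infty$. By disjointness each summand then maps to $a^2$, so $\varphi(\bu)=a^2$, and the hypothesis forces $\varphi(\bq)=a^2$. Reading this off the weight arithmetic, $\bq$ may contain no variable outside $c(\bu)=c(L_1(\bu))\cup c(L_2(\bu))$ and must have total weight $2$, which leaves exactly two possibilities: a single occurrence of a $c(L_1(\bu))$-variable (so $\ell(\bq)=1$ and $\bq$ is a summand, making the identity trivial), or two occurrences of $c(L_2(\bu))$-variables (so $\ell(\bq)=2$ and $c(\bq)\subseteq c(L_2(\bu))$). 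This is precisely condition $(3)$.

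The part I expect to require the most care is not any single estimate but the bookkeeping around incomparability: since $a$ and $a^2$ are distinct atoms, $\varphi(\bq)\leq\varphi(\bu)$ collapses to the equality $\varphi(\bq)=\varphi(\bu)$ whenever $\varphi(\bu)\neq\infty$, and one must keep the two regimes $c=a$ and $c=a^2$ apart throughout the verification of $(3)$. The conceptual crux is recognizing that conditions $(1)$ and $(2)$ are exactly the syntactic shapes for which no evaluation leaves $\varphi(\bu)$ strictly below $\infty$, so that the identity holds there for the trivial reason that both sides are forced to the top.
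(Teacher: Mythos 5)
Your proof is correct and follows essentially the same route as the paper: conditions (1) and (2) are handled by showing every evaluation sends $\bu$ to the top element, condition (3) by direct verification, and necessity via the same separating substitution (variables of $c(L_1(\bu))$ to $a^2$, of $c(L_2(\bu))$ to $a$, all others to the top). The only cosmetic difference is that the paper routes the sufficiency of (3) through the identity $xy\approx x^2+y^2$ of $S_2$ rather than your direct weight computation, which amounts to the same calculation.
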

\begin{proof}
Firstly, it is easy to verify that $S_2$ satisfies the following identities
\begin{align}
x_1x_2x_3 & \approx y_1y_2y_3; \label{f1}\\
xy        & \approx yx;    \label{f2}\\
xy+x& \approx x^3;         \label{f3}\\
x^3+y& \approx x^3;    \label{f4}\\
xy &\approx x^2+y^2.   \label{f5}
\end{align}
Suppose that $\ell(\bu_i)\geq 3$ for some $\bu_i \in \bu$.
Let $\varphi: P_f(X^+) \to S_2$ be an arbitrary substitution.
Since the identities (\ref{f1}) and (\ref{f4})
are satisfied by $S_2$, it follows immediately that $\varphi(\bu)=1$ and so
$\varphi(\bu)=\varphi(\bu)+\varphi(\bq)=\varphi(\bu+\bq)$.

Assume that $L_1(\bu)\cap L_2(\bu)\neq\emptyset$. Let $\varphi: P_f(X^+) \to S_2$ be an arbitrary substitution.
Since the identities (\ref{f1})--(\ref{f4})
are satisfied by $S_2$, we can deduce that $\varphi(\bu)=1$ and so
$\varphi(\bu)=\varphi(\bu+\bq)$.

Suppose that $\ell(\bu_i)\leq 2$ for all $\bu_i \in \bu$, $c(L_1(\bu))\cap c(L_2(\bu))=\emptyset$, $\ell(\bq)=2$ and $c(\bq)\subseteq c(L_2(\bu))$.
It is easy to see that the identity (\ref{f5}) implies the identity $\bu\approx \bu+\bq$.
Since $S_2$ satisfies (\ref{f5}), it follows immediately that $S_2$ also satisfies $\bu\approx \bu+\bq$.

Conversely, suppose that $S_2$ satisfies $\bu\approx \bu+\bq$, where $\ell(\bu_i)\leq 2$ for all $\bu_i \in \bu$,
$c(L_1(\bu))\cap c(L_2(\bu))=\emptyset$.
Let $\varphi: X \to S_2$ be a substitution
such that $\varphi(x)=2$ if $x\in c(L_1(\bu))$, $\varphi(x)=3$ if $x\in c(L_2(\bu))$,
and $\varphi(x)=1$ for all remaining variables $x$.
Then $\varphi(\bu)=2$ and so $\varphi(\bq)=2$.
This implies that $\ell(\bq)\leq 2$ and $c(\bq)\subseteq c(\bu)$.
If $\ell(\bq)=1$, then $\bq=x$ for some $x\in X$ and so $\varphi(x)=\varphi(\bq)=2$.
Thus $x\in c(L_1(\bu))$ and so $\bu\approx \bu+\bq$ is trivial.
If $\ell(\bq)=2$, we write $\bq=xy$ for some $x, y\in X$.
It follows that $\varphi(x)\varphi(y)=\varphi(xy)=\varphi(\bq)=2$
and so $\varphi(x)=\varphi(y)=3$.
Hence $x, y\in c(L_2(\bu))$ and so $c(\bq)\subseteq c(L_2(\bu))$.
This completes the proof.
\end{proof}

\begin{pro}
$\mathsf{V}(S_{(4, 15)})$ is the ai-semiring variety defined by the identities
\begin{align}
xy &\approx yx; \label{f061404}\\
xy &\approx x^2+y^2; \label{f061406}\\
xyz&\approx xyz+x; \label{f061402}\\
x^2+x&\approx x^3; \label{f061403}\\
x_1x_2x_3+x_4 & \approx x_1x_2x_3x_4. \label{f061401}
\end{align}
\end{pro}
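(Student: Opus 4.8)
The plan is to follow the template of the proof for $S_{(4,14)}$. First I would check by direct computation on the four-element table that $S_{(4,15)}$ satisfies (\ref{f061404})--(\ref{f061401}). It then remains to show that every identity of $S_{(4,15)}$ follows from these five identities together with the axioms of $\mathbf{AI}$. By the standard reduction it suffices to derive each identity of the form $\bu\approx\bu+\bq$ with $\bu=\bu_1+\cdots+\bu_n$ and $\bq$ a single word, and by (\ref{f061404}) we may take $\bu_i,\bq\in X_c^+$. The subalgebra on $\{1,2,3\}$ is a copy of $S_2$ and the subalgebra on $\{1,4\}$ is a copy of $M_2$, so the identity, being inherited by subalgebras, holds in both $S_2$ and $M_2$; hence $c(\bq)\subseteq c(\bu)$ by Lemma~\ref{nlemma1}(3), and $\bu,\bq$ satisfy one of the three conditions of Lemma~\ref{lem24061201}. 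The derivation is organised around these three cases.

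First I would record two facts about long words. Using (\ref{f061402}) and (\ref{f061404}), for any word $\bw$ with $\ell(\bw)\ge 3$ and any $z\in c(\bw)$ one gets $\bw\approx\bw+z$ (write $\bw=z\bw'$ and split $\bw'$ into two nonempty factors). Using (\ref{f061401}) in the form $w_1w_2w_3+w_4\approx w_1w_2w_3w_4$, valid for arbitrary nonempty words $w_i$, a word of length $\ge 3$ absorbs any word built from its letters:
\[
\bw+\br\approx\bw\br\approx\bw+z_1+\cdots+z_m\approx\bw,
\]
for $\br=z_1\cdots z_m$ with every $z_i\in c(\bw)$, the middle step splitting off the letters one at a time and the last step using the first fact. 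These two facts dispose of condition~(1): if $\ell(\bu_i)\ge 3$ for some $i$, repeatedly splitting with (\ref{f061401}) collapses the word $\bw^{\ast}=\bu_i\bu_1\cdots\bu_n$ to the sum $\bu_i+\bu_1+\cdots+\bu_n=\bu$, so that $\bu\approx\bw^{\ast}$ with $\ell(\bw^{\ast})\ge 3$ and $c(\bw^{\ast})=c(\bu)\supseteq c(\bq)$; absorbing $\bq$ into $\bw^{\ast}$ then gives $\bu\approx\bw^{\ast}\approx\bw^{\ast}+\bq\approx\bu+\bq$.

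Condition~(2) reduces to condition~(1): if $x\in c(L_1(\bu))\cap c(L_2(\bu))$, then $x$ and some $xy$ are summands, and
\[
\bu\approx\bu+x+xy\approx\bu+x+x^2+y^2\approx\bu+x^3+y^2
\]
by (\ref{f061406}) and (\ref{f061403}), producing a length-$3$ summand $x^3$ while keeping $c(\bu+x^3+y^2)=c(\bu)\supseteq c(\bq)$, so the argument for condition~(1) applies. In condition~(3) all $\bu_i$ have length $\le 2$, $c(L_1(\bu))\cap c(L_2(\bu))=\emptyset$ and $\ell(\bq)\le 2$; the case $\ell(\bq)=1$ is trivial, and for $\bq=z_1z_2$ one has $z_1,z_2\in c(L_2(\bu))$. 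The key step is to adjoin a single square: for $z\in c(L_2(\bu))$ pick a length-$2$ summand $zw$ of $\bu$, write $\bu=\bu^{\circ}+zw$, and compute
\[
\bu+z^2=\bu^{\circ}+zw+z^2\approx\bu^{\circ}+z^2+w^2+z^2\approx\bu^{\circ}+z^2+w^2\approx\bu^{\circ}+zw=\bu,
\]
where the duplicated $z^2$ merges by idempotency of $+$ and the outer steps apply (\ref{f061406}) to $zw$. Applying this to $z_1$ and $z_2$ and then contracting $z_1^2+z_2^2$ to $z_1z_2$ by (\ref{f061406}) yields $\bu\approx\bu+z_1z_2$.

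The hard part is condition~(1). In contrast to $S_{(4,14)}$, the cubing identity (\ref{f061402}) only lets a length-$3$ word swallow its own letters, not an arbitrary word, so $\bq$ cannot be absorbed in a single step; the content restriction $c(\bq)\subseteq c(\bu)$ forced by the $M_2$-factor must be used, and the device that makes this work is the collapse of the whole sum $\bu$ into the single long word $\bw^{\ast}$ via (\ref{f061401}), which gathers all of $c(\bu)$ into one place. The only other delicate point is the square manipulation in condition~(3), where the expand-and-merge computation $zw+z^2\approx z^2+w^2+z^2\approx zw$ is needed to adjoin one square without leaving a spurious $w^2$ behind.
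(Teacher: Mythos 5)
Your proposal is correct and follows essentially the same route as the paper: reduce via the $M_2$ and $S_2$ subalgebras to the three cases of Lemma \ref{lem24061201}, manufacture a length-$3$ summand in the second case via $xy\approx x^2+y^2$ and $x^2+x\approx x^3$, and settle the third case by expanding and contracting squares with $xy\approx x^2+y^2$. The only cosmetic difference is in the first case, where you absorb $\bq$ into one long word letter by letter using $x_1x_2x_3+x_4\approx x_1x_2x_3x_4$ together with $xyz\approx xyz+x$, whereas the paper pads the long word with high powers so that it factors as $\bq\bp$ and then peels off $\bq$ in a single application of $xyz\approx xyz+x$; both executions are sound.
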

\begin{proof}
It is easily verified that $S_{(4, 15)}$ satisfies the identities
(\ref{f061404})--(\ref{f061401}).
In the remainder we need only prove that every ai-semiring identity of $S_{(4, 15)}$
can be derived by (\ref{f061404})--(\ref{f061401}) and the identities defining $\mathbf{AI}$.
Let $\bu \approx \bu+\bq$ be such an identity, where $\bu=\bu_1+\bu_2+\cdots+\bu_n$
and $\bu_i, \bq\in X_c^+$, $1 \leq i \leq n$.
Since $M_2$ is isomorphic to $\{1, 4\}$, it follows that $M_2$ satisfies $\bu \approx \bu+\bq$ and so $c(\bq) \subseteq c(\bu)$.
Since $S_2$ is isomorphic to $\{1, 2, 3\}$, we have that $S_2$ also satisfies $\bu \approx \bu+\bq$.
By Lemma \ref{lem24061201} it is enough to consider the following three cases:

\textbf{Case 1.} $\ell(\bu_i) \geq 3$ for some $\bu_i \in \bu$. Then
\[
\bu \approx \bu+\bu_i \stackrel{(\ref{f061401}), (\ref{f061404})}\approx
\bu+\bu_i+\bu_1^k\bu _2^k \cdots \bu_n^k \stackrel{(\ref{f061404})}\approx \bu+\bq\bp
\stackrel{(\ref{f061402})}\approx \bu+\bq\bp+\bq,
\]
where $k>m(x, \bq)+1$ for all $x \in c(\bq)$ and $\ell(\bp)\geq 2$. This implies $\bu \approx \bu+\bq$.

\textbf{Case 2.} $c(L_1(\bu))\cap c(L_2(\bu))\neq\emptyset$. Take $x$ in $c(L_1(\bu))\cap c(L_2(\bu))$.
Then $x\in L_1(\bu)$ and $xy\in L_2(\bu)$ for some $y\in X$. So we have
\[
\bu \approx \bu+x+xy\stackrel{(\ref{f061406})}\approx \bu+x+x^2+y^2\stackrel{(\ref{f061403})}\approx \bu+y^2+x^3.
\]
This derives the identity $\bu \approx \bu+x^3$.
The remaining process are similar to the preceding case.

\textbf{Case 3.} $\ell(\bu_i)\leq 2$ for all $\bu_i \in u$, $c(L_1(\bu))\cap c(L_2(\bu))=\emptyset$,
$\ell(\bq)=2$ and $c(\bq)\subseteq c(L_2(\bu))$.
If we write $\bq=xy$ for some $x, y \in X$,
then $xx_1, yy_1\in L_2(\bu)$ for some $x_1, y_1\in X$. Furthermore, we have
\begin{align*}
\bu
&\approx \bu+xx_1+yy_1\\
&\approx \bu+x_1^2+y_1^2+x^2+y^2&&(\text{by}~(\ref{f061406}))\\
&\approx \bu+x_1^2+y_1^2+xy&&(\text{by}~(\ref{f061406}))\\
&\approx \bu+x_1^2+y_1^2+\bq.
\end{align*}
This proves $\bu \approx \bu+\bq$.
\end{proof}

\begin{remark}
It is easy to see that $S_{(4, 15)}$ is an idempotent extension of $S_2$.
So $\mathsf{V}(S_{(4, 15)})= \mathsf{V}(S_2, M_2)$.
This shows that $\mathsf{V}(S_{(4, 15)})$ is the regularization of $\mathsf{V}(S_2)$.
\end{remark}

\begin{pro}\label{pro24061311}
$\mathsf{V}(S_{(4, 41)})$ is the ai-semiring variety defined by the identities
\begin{align}
xy+x& \approx xy+x^3;         \label{f11}\\
yx+x& \approx yx+x^3;    \label{f10}\\
x_1y_1z_1+x_2y_2 & \approx x_1y_1z_1+x_2; \label{f06}\\
x_1y_1z_1+x_2& \approx x_1y_1z_1+x_2+x_2y_2;    \label{f7}\\
x_1y_1+x_2y_2&\approx x_1y_1+x_2y_2+x_1x_2; \label{f8}\\
x_1y_1+x_2y_2&\approx x_1y_1+x_2y_2+x_1y_2. \label{f9}
\end{align}
\end{pro}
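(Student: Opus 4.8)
The plan is to follow the template of the earlier propositions: first check soundness, i.e.\ that $S_{(4,41)}$ satisfies (\ref{f11})--(\ref{f9}) (a routine finite verification), and then prove completeness, namely that every identity $\bu\approx\bu+\bq$ of $S_{(4,41)}$, with $\bu=\bu_1+\cdots+\bu_n$ and $\bu_i,\bq\in X^+$, is derivable from (\ref{f11})--(\ref{f9}) together with the axioms of $\mathbf{AI}$.

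To organize the completeness argument I would first read off necessary conditions on $\bu,\bq$ from two subalgebras. Since $\{1,4\}$ is a copy of $L_2$ and $\{1,2,3\}$ is a copy of $S_2$, the identity must hold in both. By Lemma \ref{nlemma1}(1) this forces $h(\bq)\in h(\bu)$, and by Lemma \ref{lem24061201} one of the following holds: (1) $\ell(\bu_j)\ge 3$ for some $j$; (2) $c(L_1(\bu))\cap c(L_2(\bu))\ne\emptyset$; (3) all $\ell(\bu_i)\le 2$, $c(L_1(\bu))\cap c(L_2(\bu))=\emptyset$, $\ell(\bq)\le 2$, and if $\ell(\bq)=2$ then $c(\bq)\subseteq c(L_2(\bu))$. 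One can check, e.g.\ via a subdirect embedding $S_{(4,41)}\hookrightarrow S_2\times L_2$ (sending $4$ to $(1,4)$ and fixing $1,2,3$ in the first coordinate), that these conditions are also sufficient, so that $\mathsf{V}(S_{(4,41)})=\mathsf{V}(S_2,L_2)$; but for completeness only their necessity is used. I then split into three cases accordingly.

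Case (1): $\bu$ has a summand of length $\ge 3$. The crucial observation is that (\ref{f06}) extracts heads. For a word $\bw$ with $\ell(\bw)\ge 3$, instantiating $x_1y_1z_1\mapsto\bw$ and $x_2y_2\mapsto\bw$ (so $x_2\mapsto h(\bw)$, $y_2\mapsto s(\bw)$) and using $\bw+\bw=\bw$ gives $\bw\approx\bw+h(\bw)$. Pick $\bu_j$ with $h(\bu_j)=h(\bq)$ (possible since $h(\bq)\in h(\bu)$). If $\ell(\bu_j)\ge 3$ this already yields $\bu\approx\bu+h(\bq)$; if $\ell(\bu_j)=2$, applying (\ref{f06}) with a length-$\ge 3$ summand $\bw$ and $x_2y_2\mapsto\bu_j$ to replace $\bu_j$ by $h(\bu_j)$ and then (\ref{f7}) to restore $\bu_j$ again yields $\bu\approx\bu+h(\bq)$; and if $\ell(\bu_j)=1$ then $h(\bq)=\bu_j$ is already a summand. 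With the single letter $h(\bq)$ now available as a summand, one application of (\ref{f7}) with $\bw$ adds $h(\bq)s(\bq)=\bq$ (when $\ell(\bq)=1$ the letter $h(\bq)=\bq$ is already present), giving $\bu\approx\bu+\bq$. Case (2) reduces to Case (1): if $x\in c(L_1(\bu))\cap c(L_2(\bu))$ then $x$ is a length-one summand occurring also in a length-two summand, so (\ref{f11}) or (\ref{f10}), according as $x$ is that word's head or tail, gives $\bu\approx\bu+x^3$; now $\bu+x^3$ contains a word of length $3$ and Case (1) applies. Case (3): if $\ell(\bq)=1$ then $\bq\in c(L_1(\bu))$, so $\bq$ is already a summand and the identity is trivial; if $\ell(\bq)=2$, write $\bq=x_1x_2$, where $x_1$ is the head of a length-two summand (since $h(\bq)\in h(\bu)$ but $x_1\in c(L_2(\bu))$ and $c(L_1)\cap c(L_2)=\emptyset$) and $x_2\in c(L_2(\bu))$ occurs in some length-two summand as a head or a tail; a single application of (\ref{f8}) (if $x_2$ is a head) or (\ref{f9}) (if $x_2$ is a tail) then produces $x_1x_2=\bq$.

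The step I expect to be the main obstacle is Case (1): recognizing that (\ref{f06}) together with additive idempotency lets one peel off the head of an arbitrarily long word, and that (\ref{f7}) then rebuilds an arbitrary word from that head. Once this is in place, Case (2) is a short reduction and Case (3) is immediate from (\ref{f8}) and (\ref{f9}). The soundness check and the bookkeeping of adding and later discarding the auxiliary summands $x^3$ and $h(\bq)$ (using that derivability is symmetric and transitive, cf.\ Lemma \ref{lem1}) are routine.
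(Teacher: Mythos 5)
Your proposal is correct and follows essentially the same route as the paper's proof: necessary conditions from the subalgebras $L_2\cong\{1,4\}$ and $S_2\cong\{1,2,3\}$, the three-case split via Lemma \ref{lem24061201}, head extraction with (\ref{f06}) and reconstruction of $\bq$ with (\ref{f7}) in Case 1, reduction of Case 2 to Case 1 via (\ref{f11})/(\ref{f10}), and a single application of (\ref{f8}) or (\ref{f9}) in Case 3. Your treatment is if anything slightly more careful about the degenerate subcases ($\ell(\bu_i)=1$, $\ell(\bq)=1$) that the paper passes over silently.
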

\begin{proof}
It is easy to verify that $S_{(4, 41)}$ satisfies the identities (\ref{f11})--(\ref{f9}).
In the remainder it is enough to prove that every ai-semiring identity of $S_{(4, 41)}$
can be derived by (\ref{f11})--(\ref{f9}) and the identities defining $\mathbf{AI}$.
Let $\bu \approx \bu+\bq$ be such an identity, where $\bu=\bu_1+\bu_2+\cdots+\bu_n$ and $\bu_i, \bq\in X^+$, $1 \leq i \leq n$.
It is easy to see that $L_2$ is isomorphic to $\{1, 4\}$ and so $L_2$ satisfies  $\bu \approx \bu+\bq$.
This implies that there exists $\bu_i \in \bu$ such that $h(\bu_i)=h(\bq)$ and so $\bu_i=h(\bq)s(\bu_i)$.
On the other hand, we have that $S_2$ is isomorphic to $\{1, 2, 3\}$ and so it satisfies $\bu \approx \bu+\bq$.
By Lemma \ref{lem24061201} we only need to consider the following three cases:

\textbf{Case 1.} $\ell(\bu_j)\geq 3$ for some $\bu_j\in \bu$. Then
\begin{align*}
\bu
&\approx \bu+\bu_j+\bu_i\\
&\approx  \bu+\bu_j+h(\bq)s(\bu_i)\\
&\approx  \bu+\bu_j+h(\bq)      &&(\text{by}~(\ref{f06}))\\
&\approx  \bu+\bu_j+h(\bq)+h(\bq)s(\bq)  &&(\text{by}~(\ref{f7}))\\
&\approx  \bu+\bu_j+h(\bq)+\bq.
\end{align*}
This derives $\bu \approx \bu+\bq$.

\textbf{Case 2.} $c(L_1(\bu))\cap c(L_2(\bu))\neq\emptyset$. Take $x$ in $c(L_1(\bu))\cap c(L_2(\bu))$.
Then $x\in L_1(\bu)$.
If $xy\in L_2(\bu)$ for some $y \in X$, then
\[
\bu\approx \bu+xy+x \stackrel{(\ref{f11})} \approx \bu+xy+x^3.
\]
If $yx\in L_2(\bu)$ for some $y \in X$, then
\[
\bu\approx \bu+yx+x \stackrel{(\ref{f10})} \approx \bu+yx+x^3.
\]
So we can derive $\bu\approx \bu+x^3$.
The remaining steps are similar to the preceding case.

\textbf{Case 3.} $\ell(\bu_k)\leq 2$ for all $\bu_k \in u$, $c(L_1(\bu))\cap c(L_2(\bu))=\emptyset$,
$\ell(\bq)=2$ and $c(\bq)\subseteq c(L_2(\bu))$.
Then $\bu_i \in L_2(\bu)$.
We may assume that $\bq=xy$ for some $x, y \in X$ and so $\bu_i=xx_1$ for some $x_1 \in X$.
If $yy_1\in L_2(\bu)$ for some $y_1\in X$, then
\[
\bu\approx \bu+xx_1+yy_1 \stackrel{(\ref{f8})}{\approx}  \bu+xx_1+yy_1+xy \approx \bu+xx_1+yy_1+\bq.
\]
If $y_2y\in L_2(u)$ for some $y_2\in X$, then
\[
\bu\approx \bu+xx_1+y_2y \stackrel{(\ref{f9})}{\approx}  \bu+xx_1+yy_1+xy \approx \bu+xx_1+yy_1+\bq.
\]
So we obtain the identity $\bu \approx \bu+\bq$.
\end{proof}

\begin{remark}
It is a routine matter to verify that $S_{(4, 41)}$ is isomorphic to a subdirect product of $S_2$ and $S_5$.
So $\mathsf{V}(S_{(4, 41)})=\mathsf{V}(S_2, S_5)$.
On the other hand, it is easy to see that both $L_2$ and $T_2$ can be embedded into $S_5$.
Also, $S_5$ satisfies an equational basis of $\mathsf{V}(L_2, T_2)$ that can be found in \cite{sr}.
It follows that $\mathsf{V}(S_5)=\mathsf{V}(L_2, T_2)$ and so $\mathsf{V}(S_{(4, 41)})=\mathsf{V}(S_2, L_2, T_2)$.
Since $T_2$ can be embedded into $S_2$, we therefore have
\[
\mathsf{V}(S_{(4, 41)})=\mathsf{V}(S_2, L_2).
\]
\end{remark}

\begin{cor}
The ai-semiring $S_{(4, 16)}$ is finitely based.
\end{cor}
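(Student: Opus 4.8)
The plan is to exhibit $S_{(4, 16)}$ as the opposite semiring of $S_{(4, 41)}$ and then transport the finite basis of Proposition~\ref{pro24061311} across the natural left--right symmetry of the equational theory of ai-semirings. For an ai-semiring $S$, let $S^{op}$ denote the algebra on the same carrier, with the same (necessarily commutative) addition and with multiplication $a\ast b:=ba$. First I would read off from Table~\ref{tb1} that the multiplication table of $S_{(4, 16)}$ is exactly the transpose of that of $S_{(4, 41)}$, while both algebras carry the addition of Figure~\ref{figure1}; hence the identity map on $\{1,2,3,4\}$ is an anti-isomorphism, so $S_{(4, 16)}=S_{(4, 41)}^{op}$. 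This identification is the only computation involved, and it is routine.

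Next I would set up the mirror operation on terms. For a word $\bw=y_1y_2\cdots y_k\in X^+$ put $\overleftarrow{\bw}:=y_k\cdots y_2y_1$, and extend reversal to ai-semiring terms by $\overleftarrow{\bu_1+\cdots+\bu_n}:=\overleftarrow{\bu_1}+\cdots+\overleftarrow{\bu_n}$; this is an involution on $P_f(X^+)$. The key observation is that a homomorphism $\varphi\colon P_f(X^+)\to S$ and the homomorphism $\psi\colon P_f(X^+)\to S^{op}$ determined by the same assignment $x\mapsto\varphi(x)$ satisfy $\psi(\overleftarrow{\bw})=\varphi(\bw)$ for every word $\bw$, because $\ast$ reverses the order of a product; extending additively (addition is unchanged in $S^{op}$) gives $\psi(\overleftarrow{\bu})=\varphi(\bu)$ for every term $\bu$. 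Consequently $S$ satisfies $\bu\approx\bv$ if and only if $S^{op}$ satisfies $\overleftarrow{\bu}\approx\overleftarrow{\bv}$.

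Then I would upgrade this from satisfaction to derivability using Lemma~\ref{lem1}. For a substitution $\varphi$ let $\overleftarrow{\varphi}$ be the substitution with $\overleftarrow{\varphi}(x):=\overleftarrow{\varphi(x)}$; one checks $\overleftarrow{\varphi(A)}=\overleftarrow{\varphi}(\overleftarrow{A})$ and $\overleftarrow{P\varphi(A)Q+R}=\overleftarrow{Q}\,\overleftarrow{\varphi}(\overleftarrow{A})\,\overleftarrow{P}+\overleftarrow{R}$, the cases $P=Q=\{1\}$ and $R=\emptyset$ being handled trivially. Applying reversal to each rewriting step $T_i=P\varphi(A)Q+R$, $T_{i+1}=P\varphi(B)Q+R$ of a derivation therefore turns it into a legal rewriting step over $\overleftarrow{\Sigma}:=\{\overleftarrow{A}\approx\overleftarrow{B}\mid(A\approx B)\in\Sigma\}$. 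Since the identities defining $\mathbf{AI}$ are themselves preserved by reversal (reversal interchanges the two distributive laws and fixes the remaining axioms), it follows that $\Sigma$ is a finite basis for $S$ if and only if $\overleftarrow{\Sigma}$ is a finite basis for $S^{op}$. Taking $\Sigma$ to be the finite set (\ref{f11})--(\ref{f9}) of Proposition~\ref{pro24061311}, the reversed set $\overleftarrow{\Sigma}$ is a finite basis for $S_{(4, 41)}^{op}=S_{(4, 16)}$, and so $S_{(4, 16)}$ is finitely based.

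I expect the only genuinely non-routine point to be the compatibility of reversal with the context-and-substitution rewriting of Lemma~\ref{lem1}, that is, verifying $\overleftarrow{\varphi(A)}=\overleftarrow{\varphi}(\overleftarrow{A})$ so that a derivation mirrors stepwise into a derivation over $\overleftarrow{\Sigma}$. Everything else --- the table comparison identifying $S_{(4, 16)}$ with $S_{(4, 41)}^{op}$ and the self-duality of the $\mathbf{AI}$ axioms --- is routine.
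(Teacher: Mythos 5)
Your proposal is correct and follows exactly the paper's argument: the paper likewise observes that $S_{(4,16)}$ and $S_{(4,41)}$ have dual (transposed) multiplications and then invokes Proposition~\ref{pro24061311}. The only difference is that you spell out the word-reversal transfer of identities and derivations explicitly, whereas the paper treats this standard left--right duality as immediate.
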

\begin{proof}
It is easy to see that $S_{(4, 16)}$ and $S_{(4, 41)}$ have dual multiplications.
By Proposition \ref{pro24061311} we immediately deduce that $S_{(4, 16)}$ is finitely based.
\end{proof}

\begin{pro}
$\mathsf{V}(S_{(4, 42)})$ is the ai-semiring variety defined by the identities
\begin{align}
x^4 & \approx x^3; \label{f061410} \\
xy  & \approx yx; \label{f061411} \\
x^3& \approx x+xy; \label{f061412} \\
x^2+yz&\approx x^2+yz+xy; \label{f061413}\\
x_1x_2x_3+x_4 & \approx x_1x_2x_3+x_4+x_4x_5. \label{f061414}
\end{align}
\end{pro}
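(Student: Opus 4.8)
The plan is to mimic the syntactic strategy used above for $S_{(4,15)}$ and $S_{(4,41)}$, exploiting the two proper subalgebras $\{1,4\}\cong D_2$ and $\{1,2,3\}\cong S_2$ of $S_{(4,42)}$. First I would check that $S_{(4,42)}$ satisfies \eqref{f061410}--\eqref{f061414}; the only mildly delicate point is that in $S_{(4,42)}$ the element $4$ is multiplicatively absorbing, the value $3$ never occurs as a product, and every product of at least three factors lies in $\{1,4\}$, which makes \eqref{f061413} and \eqref{f061414} routine to verify. For the converse, by the reduction recalled after Lemma~\ref{lem1} it suffices to treat an identity $\bu\approx\bu+\bq$ with $\bu=\bu_1+\cdots+\bu_n$ and $\bu_i,\bq\in X^+$, and by commutativity \eqref{f061411} I pass to $X_c^+$. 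Since the identity holds in $\{1,4\}\cong D_2$, Lemma~\ref{nlemma1}(4) supplies a summand $\bu_{i_0}$ with $c(\bu_{i_0})\subseteq c(\bq)$; and since it holds in $\{1,2,3\}\cong S_2$, Lemma~\ref{lem24061201} splits the problem according to whether (1) some $\ell(\bu_i)\ge 3$, (2) $c(L_1(\bu))\cap c(L_2(\bu))\neq\emptyset$, or (3) all summands have length $\le 2$ and $c(L_1(\bu))\cap c(L_2(\bu))=\emptyset$.

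I would dispose of Cases (2) and (3) first. In Case (3) the $S_2$-analysis forces $\ell(\bq)\le 2$, with $\bq$ already a summand when $\ell(\bq)=1$, and $c(\bq)\subseteq c(L_2(\bu))$ when $\bq=xy$ has length two; as $c(L_1(\bu))\cap c(L_2(\bu))=\emptyset$, the $D_2$-summand $\bu_{i_0}$ cannot be a letter, so it is one of $x^2,y^2,xy$, and the coincident cases (including $\bu_{i_0}=\bq$) are trivial. Assuming $x^2\in\bu$, I pick a length-two summand $yz\in\bu$ with $y\in c(yz)$, which exists because $y\in c(L_2(\bu))$, and apply \eqref{f061413} to obtain $\bu\approx\bu+xy$, i.e.\ $\bu\approx\bu+\bq$. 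In Case (2) I choose $x\in c(L_1(\bu))\cap c(L_2(\bu))$, so that $\bu$ contains a letter $x$ together with a length-two word $xy$ (or $x^2$); then \eqref{f061412} gives $x+xy\approx x^3$, so I may adjoin the cube $x^3$ to $\bu$ without changing it, and the problem reduces to Case (1) with a cube now available.

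The real work is Case (1), where $\bu$ has a summand of length $\ge 3$, so by \eqref{f061414} any summand may be lengthened by appending an arbitrary word. The obstacle is that the only constraint on $\bq$ is $c(\bu_{i_0})\subseteq c(\bq)$: the multiplicities of $\bq$ may be smaller than those of $\bu_{i_0}$, and $\bq$ may contain variables outside $c(\bu)$, so one cannot simply grow $\bu_{i_0}$ into $\bq$. To get around this I route through the square-free word $\bar{\bv}=\prod_{x\in c(\bu_{i_0})}x$. Growing $\bu_{i_0}$ by \eqref{f061414} and capping exponents by \eqref{f061410} produces the summand $\bar{\bv}^{\,3}=\prod_{x\in c(\bu_{i_0})}x^3$; splitting this perfect cube by \eqref{f061412} (with $x\mapsto\bar{\bv}$) adjoins $\bar{\bv}$, and a short bookkeeping argument---any appended ``grown'' summand $\bar{\bv}\bt$ can be re-added once $\bar{\bv}$ is present, hence removed---shows $\bu\approx\bu+\bar{\bv}$. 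Finally, since $c(\bu_{i_0})\subseteq c(\bq)$ gives $\bar{\bv}\mid\bq$, \eqref{f061414} lets me grow $\bar{\bv}$ to $\bq$; combining $\bu\approx\bu+\bar{\bv}$ with $\bu+\bar{\bv}\approx\bu+\bar{\bv}+\bq$ then yields $\bu\approx\bu+\bq$.

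I expect the delicate part to be precisely this multiplicity-reduction step in Case (1): whereas the analogous propositions could lean on $c(\bq)\subseteq c(\bu)$ coming from an $M_2$-subalgebra, here $\{1,4\}$ is a copy of $D_2$ rather than $M_2$, so $\bq$ is controlled only by the reverse inclusion $c(\bu_{i_0})\subseteq c(\bq)$, and the whole burden of matching the exact word $\bq$---new variables and low multiplicities included---falls on the interplay of \eqref{f061410}, \eqref{f061412} and \eqref{f061414}. Everything else is the routine sum-of-words manipulation licensed by Lemma~\ref{lem1}.
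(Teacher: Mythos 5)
Your proposal is correct and follows essentially the same route as the paper: both use $\{1,4\}\cong D_2$ to extract a summand $\bu_i$ with $c(\bu_i)\subseteq c(\bq)$, split into the three cases supplied by Lemma \ref{lem24061201} for $S_2\cong\{1,2,3\}$, handle Cases 2 and 3 with \eqref{f061412} and \eqref{f061413} exactly as the paper does, and in Case 1 inflate $\bu_i$ to $\bigl(\prod_{x\in c(\bu_i)}x\bigr)^3$ via \eqref{f061414} and \eqref{f061410} before splitting the cube with \eqref{f061412}. The only (immaterial) difference is that the paper substitutes $y\mapsto\bq'$ in \eqref{f061412} to produce $\bq$ in one step, whereas you first isolate the square-free word and then regrow it to $\bq$ by \eqref{f061414}.
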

\begin{proof}
It is easy to verify that $S_{(4, 42)}$ satisfies the identities (\ref{f061410})--(\ref{f061414}).
In the remainder it is enough to show that every ai-semiring identity of $S_{(4, 42)}$
is derivable from (\ref{f061410})--(\ref{f061414}) and the identities defining $\mathbf{AI}$.
Let $\bu \approx \bu+\bq$ be such an identity, where $\bu=\bu_1+\bu_2+\cdots+\bu_n$
and $\bu_i, \bq\in X_c^+$, $1 \leq i \leq n$.
It is easy to see that $D_2$ is isomorphic to $\{1, 4\}$
and so $D_2$ satisfies $\bu \approx \bu+\bq$. This implies that $c(\bu_i) \subseteq c(\bq)$ for some $\bu_i\in \bu$.
Let $c(\bu_i)=\{x_1, x_2, \ldots, x_m\}$ and $\bq=x_1x_2\cdots x_m\bq'$ for some word $\bq'$.
Since $S_2$ is isomorphic to $\{1, 2, 3\}$, it follows that $S_2$ also satisfies $\bu \approx \bu+\bq$.
By Lemma \ref{lem24061201} it suffices to consider the following three cases:

\textbf{Case 1.} $\ell(\bu_j) \geq 3$ for some $\bu_j \in \bu$. We have
\begin{align*}
\bu
&\approx \bu+\bu_j+\bu_i\\
&\approx \bu+\bu_j+(x_1x_2\cdots x_m)^3&&(\text{by}~(\ref{f061410}), (\ref{f061411}), (\ref{f061414}))\\
&\approx \bu+\bu_j+x_1x_2\cdots x_m+x_1x_2\cdots x_m\bq'&&(\text{by}~(\ref{f061412}))\\
&\approx \bu+\bu_j+x_1x_2\cdots x_m+\bq.
\end{align*}
This derives $\bu\approx \bu+\bq$.

\textbf{Case 2.} $c(L_1(\bu))\cap c(L_2(\bu)) \neq\emptyset$. Let $x$ be a variable in $c(L_1(\bu))\cap c(L_2(\bu))$.
Then $xy\in L_2(\bu)$ for some $y\in X^+$ and so we have
\[
\bu \approx u+x+xy \stackrel{(\ref{f061412})}\approx \bu+x^3.
\]
The remaining steps are similar to the preceding case.

\textbf{Case 3.} $\ell(\bu_k)\leq 2$ for all $\bu_k \in u$,
$c(L_1(\bu))\cap c(L_2(\bu))=\emptyset$, $\ell(\bq)=2$ and $c(\bq)\subseteq c(L_2(\bu))$.
Since $c(\bu_i) \subseteq c(\bq)$, it follows that $c(\bu_i)\subseteq c(L_2(\bu))$ and so $\bu_i\in L_2(\bu)$.
Let $\bq=xy$ for some $x, y \in X$.
Then $\bu_i=xy, x^2$ or $y^2$.
If $\bu_i=xy$, then $\bu\approx \bu+\bq$ is trivial.
If $\bu_i=x^2$, we may assume that $yy_1 \in L_2(\bu)$ for some $y_1 \in X^+$. Then
\[
\bu\approx \bu+x^2+yy_1 \stackrel{(\ref{f061413})}\approx \bu+x^2+yy_1+xy \approx \bu+\bq.
\]
This derives $\bu\approx \bu+\bq$.
If $\bu_i=y^2$, this is similar to the preceding case.
\end{proof}

\begin{remark}
It is easy to verify that $S_{(4, 42)}$ is isomorphic to a subdirect product of $S_2$ and $S_{13}$.
So $\mathsf{V}(S_{(4, 42)})=\mathsf{V}(S_2, S_{13})$.
On the other hand, it is easy to see that both $D_2$ and $T_2$ can be embedded into $S_{13}$.
Also, $S_{13}$ satisfies an equational basis of $\mathsf{V}(D_2, T_2)$ that can be found in \cite{sr}.
Thus $\mathsf{V}(S_{13})=\mathsf{V}(D_2, T_2)$ and so $\mathsf{V}(S_{(4, 42)})=\mathsf{V}(S_2, D_2, T_2)$.
Since $T_2$ can be embedded into $S_2$, we therefore obtain
\[
\mathsf{V}(S_{(4, 42)})=\mathsf{V}(S_2, D_2).
\]
\end{remark}

\section{Equational basis of $S_{(4, 30)}$, $S_{(4, 47)}$ and $S_{(4, 48)}$}
In this section we give finite equational basis for  $S_{(4, 30)}$, $S_{(4, 47)}$ and $S_{(4, 48)}$.
It is easily verified that all of these semirings contain copies of $S_4$.
So we first provide a solution of the equational problem for $S_4$.
\begin{lem}\label{lem24061501}
Let $\bu\approx \bu+\bq$ be a nontrivial ai-semiring identity such that
$\bu=\bu_1+\bu_2+\cdots+\bu_n$, where $\bu_i, \bq \in X^+$, $1\leq i \leq n$.
Then $\bu\approx \bu+\bq$ is satisfied by $S_4$ if and only if $\bu$ and $\bq$ satisfy the following conditions:
\begin{itemize}
\item[$(1)$] $c(\bq)\subseteq  c(\bu)$;

\item[$(2)$] $\ell(\bu_i)\geq 2$ for some $\bu_i \in \bu$;

\item[$(3)$] If $\bu$ satisfies the property {\rm (T)}:
\[
(\forall \bu_i, \bu_j \in \bu) ~ m(t(\bu_i), \bu_j)\leq 1; m(t(\bu_i), \bu_j)= 1\Rightarrow t(\bu_i)=t(\bu_j),
\]
then $\bu+\bq$ also satisfies the property {\rm (T)}, that is,
\[
(\forall \bu_i \in \bu) ~ m(t(\bu_i), \bq)\leq 1; m(t(\bu_i), \bq)= 1\Rightarrow t(\bu_i)=t(\bq);
\]
\[
m(t(\bq), \bq)= 1; m(t(\bq), \bu_i)\leq 1; m(t(\bq), \bu_i)= 1\Rightarrow t(\bq)=t(\bu_i).
\]
\end{itemize}
\end{lem}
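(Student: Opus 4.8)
The plan is to reduce the whole statement to a single evaluation rule for words in $S_4$, and then read off each condition from that rule. Recall that $S_4$ is a three-element flat semiring; writing its multiplicative zero (which is the top of the additive semilattice) as $0$ and its two atoms as $e$ and $a$, one has $ex=x$ and $ax=0$ for all $x$, so $e$ is a left identity and $a$ a left zero distinct from $0$. A short induction on length then shows, for any word $\bw=x_1\cdots x_k$ and any homomorphism $\varphi\from P_f(X^+)\to S_4$: (i) $\varphi(\bw)=e$ exactly when $\varphi(x_i)=e$ for all $i$; (ii) $\varphi(\bw)=a$ exactly when $\varphi(t(\bw))=a$, $m(t(\bw),\bw)=1$, and $\varphi(x)=e$ for every other letter $x$ of $\bw$; and (iii) $\varphi(\bw)=0$ otherwise. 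Since $S_4$ is flat, $\varphi(\bu)$ equals the common value $v\in\{e,a\}$ when all $\varphi(\bu_i)=v$ and equals $0$ as soon as two of the $\varphi(\bu_i)$ differ. Thus $\bu\approx\bu+\bq$ holds in $S_4$ iff $\varphi(\bq)=\varphi(\bu)$ for every $\varphi$ with $\varphi(\bu)\in\{e,a\}$, and the dichotomy $\varphi(\bu)=e$ versus $\varphi(\bu)=a$ is precisely what produces conditions (1) and (3).

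For necessity I would first note that $\{0,e\}$ is a subalgebra isomorphic to $M_2$ and $\{0,a\}$ a subalgebra isomorphic to $T_2$, so the identity holds in both. By Lemma \ref{nlemma1}(3) this gives $c(\bq)\subseteq c(\bu)$ (condition (1)), and by Lemma \ref{nlemma1}(6), together with nontriviality, $\ell(\bu_i)\ge 2$ for some $\bu_i$ (condition (2)). For condition (3), suppose $\bu$ satisfies (T) and colour each variable of $A:=t(\bu)$ by $a$ and each variable of $E:=c(\bu)\comp A$ by $e$. Property (T) forces $m(t(\bu_i),\bu_i)=1$ and places every non-tail letter of each $\bu_i$ in $E$, so by (ii) every $\varphi(\bu_i)=a$, whence $\varphi(\bu)=a$. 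The identity then yields $\varphi(\bq)=a$, and running this back through (ii) — using $c(\bq)\subseteq c(\bu)$ to locate $t(\bq)$ inside $c(\bu)$ and hence, being coloured $a$, inside $A=t(\bu)$ — produces exactly the three quantified clauses asserting that $\bu+\bq$ satisfies (T).

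For sufficiency I would assume (1)--(3) and take any $\varphi$ with $\varphi(\bu)\in\{e,a\}$. If $\varphi(\bu)=e$, then all of $c(\bu)$, hence by (1) all of $c(\bq)$, is sent to $e$, so $\varphi(\bq)=e$ by (i). If $\varphi(\bu)=a$, then every $\varphi(\bu_i)=a$; the crucial observation is that this is only possible when the colouring is forced to be $A=t(\bu)$, $E=c(\bu)\comp t(\bu)$ (no variable of $c(\bu)$ can go to $0$, every tail must receive $a$, and any variable occurring only in tail positions is itself a tail), and that this in turn forces $\bu$ to satisfy (T). Then condition (3) gives that $\bu+\bq$ satisfies (T), and reading its clauses under this colouring yields, via (ii), $\varphi(\bq)=a=\varphi(\bu)$.

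I expect the main obstacle to be the bookkeeping concentrated in condition (3): verifying that the single colouring $A=t(\bu)$ is genuinely forced whenever $\varphi(\bu)=a$, and checking that the three quantified clauses spelling out ``$\bu+\bq$ satisfies (T)'' are exactly equivalent to $\varphi(\bq)=a$ under that colouring. The delicate points are the multiplicity requirements $m(\cdot,\cdot)=1$ coming from clause (ii), and the way condition (1) is needed to guarantee that $t(\bq)$ already lies in $t(\bu)$; the rest of the argument is routine once the evaluation rule (i)--(iii) is in hand.
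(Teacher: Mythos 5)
Your proposal is correct and follows essentially the same route as the paper: conditions (1) and (2) come from the subalgebras $\{0,e\}\cong M_2$ and $\{0,a\}\cong T_2$, condition (3) from the colouring sending $t(\bu)$ to $a$ and the rest to $e$, and sufficiency from the case analysis on $\varphi(\bu)\in\{0,e,a\}$, with your evaluation rule (i)--(iii) just making explicit the word-value computation the paper leaves implicit. The only nit is terminological: $a$ with $ax=0$ is a left annihilator rather than a ``left zero,'' but the equations you actually use are the right ones.
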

\begin{proof}
Suppose that $\bu\approx \bu+\bq$ is satisfied by $S_4$.
Since $M_2$ and $T_2$ can be embedded into $S_4$, it follows that both $M_2$ and $T_2$ satisfy $\bu \approx \bu+\bq$
and so $c(\bq)\subseteq  c(\bu)$ and $\ell(\bu_i)\geq 2$ for some $\bu_i \in \bu$.
Assume that $\bu$ satisfies the property {\rm (T)}.
Let $\varphi: X \to S_4$ be a substitution such that $\varphi(x)=2$ if $x\in t(\bu)$
and $\varphi(x)=3$ otherwise. It is easy to see that $\varphi(\bu)=2$ and so $\varphi(\bq)=2$.
This implies that $\varphi(t(\bq))=2$ and $\varphi(x)=3$ for all $x\in c(p(\bq))$.
Thus
$t(\bq)\in t(\bu)$, $m(t(\bq), \bq)=1$ and $c(p(\bq))\cap t(\bu)=\emptyset$.
Let $\bu_i$ be an arbitrary word in $\bu$.
Now it follows that $m(t(\bu_i), \bq)\leq 1$ and $m(t(\bu_i), \bq)=1$ implies that $t(\bu_i)=t(\bq)$.
Since $t(\bq)\in t(\bu)$ and $\bu$ satisfies the property {\rm (T)}, we deduce that
$m(t(\bq), \bu_i)\leq 1$ and  $m(t(\bq), \bu_i)= 1$ implies that $t(\bq)=t(\bu_i)$.
So we have shown that $\bu+\bq$ satisfies the property {\rm (T)}.

Conversely, assume that $\bu$ and $\bq$ satisfy the conditions $(1)$, $(2)$ and $(3)$.
Then
$c(\bq)\subseteq  c(\bu)$ and $\ell(\bu_i)\geq 2$ for some $\bu_i \in \bu$.
Let $\varphi: X \to S_4$ be an arbitrary substitution.
If $\varphi(\bu)=1$, then
\[
\varphi(\bu+\bq)=\varphi(\bu)+\varphi(\bq)=1+\varphi(\bq)=1.
\]
If $\varphi(\bu)=3$, then $\varphi(x)=3$ for all $x\in c(\bu)$ and so $\varphi(\bq)=3$.
This shows that $\varphi(\bu+\bq)=\varphi(\bu)=3$.
If $\varphi(\bu)=2$, then $\varphi(t(\bu_i))=2$ for all $\bu_i \in \bu$ and $\varphi(x)=3$ if $x\in c(p(\bu_i))$ for some $\bu_i \in u$.
This shows that $\bu$ satisfies the property {\rm (T)}.
By assumption we have that $\bu+\bq$ also satisfies the property {\rm (T)}.
So $m(t(\bq), \bq)=1$ and $c(p(\bq))\cap t(\bu)=\emptyset$.
Since $c(\bq) \subseteq c(\bu)$, it follows that $t(\bq) \in c(\bu)$
and so $t(\bq)=t(\bu_k)$ for some $\bu_k \in \bu$.
This implies that $\varphi(t(\bq))=2$.
Let $x\in c(p(\bq))$. Since $c(p(\bq))\cap t(\bu)=\emptyset$,
we have that $x\in c(p(\bu_\ell))$ for some $\bu_\ell \in \bu$ and so $\varphi(x)=3$.
Thus $\varphi(\bq)=2$ and so
\[
\varphi(\bu+\bq)=\varphi(\bu)+\varphi(\bq)=2+2=2=\varphi(\bu).
\]
We conclude that $\bu\approx \bu+\bq$ is satisfied by $S_4$.
\end{proof}

\begin{remark}
We present an equivalent statement of Lemma \ref{lem24061501} by a notation that is introduced by \cite[Proposition 5.5]{jrz}.
Let $\bv=\bv_1+\bv_2+\cdots+\bv_m$ be an ai-semiring term such that $\bv_j\in X^+$ for all $1\leq j \leq m$.
Then $\delta(\bv)$ denotes the set of nonempty subsets $Z$ of $c(\bv)$ that satisfy the following two conditions:
\begin{itemize}
\item $Z \cap c(\bv_i)$ is a singleton for every $\bv_i\in\bv$;
\item $m(x, \bv_i ) =1$ if $\{x\}=Z \cap c(\bv_i)$.
\end{itemize}
One can observe that $t(\bv)\in \delta(\bv)$ if and only if $\bv$ satisfies the property {\rm (T)}.
By Lemma \ref{lem24061501} we immediately deduce that a nontrivial ai-semiring identity $\bu\approx \bu+\bq$ is satisfied by $S_4$ if and only if $\bu$ and $\bq$ satisfy the following conditions:
\begin{itemize}
\item[$(1)$] $c(\bq)\subseteq  c(\bu)$;

\item[$(2)$] $\ell(\bu_i)\geq 2$ for some $\bu_i \in \bu$;

\item[$(3)$] If $t(\bu)\in \delta(\bu)$, then $t(\bu+\bq)\in \delta(\bu+\bq)$.
\end{itemize}
\end{remark}

\begin{pro}\label{pro24071030}
$\mathsf{V}(S_{(4, 30)})$ is the ai-semiring variety defined by the identities
\begin{align}
x^2y& \approx xy; \label{f24061801}\\
xyz& \approx yxz; \label{f24061802}\\
x+y^2& \approx x+y^2+y^2x^2;    \label{f24061803}\\
x+yz&\approx x+yz+yx; \label{f24061804}\\
xy    & \approx xy+y. \label{f24061807}
\end{align}
\end{pro}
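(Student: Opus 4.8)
The plan is to follow the two-step scheme used for the other semirings in this section: first check that $S_{(4, 30)}$ satisfies (\ref{f24061801})--(\ref{f24061807}), which is a routine verification on the multiplication table, and then prove that these identities, together with the axioms of $\mathbf{AI}$, derive every identity of $S_{(4, 30)}$. By the reduction described in the introduction it suffices to treat a nontrivial identity of the form $\bu \approx \bu + \bq$ with $\bu = \bu_1 + \cdots + \bu_n$ and $\bu_i, \bq \in X^+$. To locate the combinatorial content of such an identity I would read off the behaviour of the subsemirings carried by $\{1, 4\} \cong M_2$, $\{1, 3\} \cong R_2$, $\{1, 2\} \cong T_2$ and $\{1, 3, 4\} \cong S_4$: Lemma \ref{nlemma1} then yields $c(\bq) \subseteq c(\bu)$, $t(\bq) \in t(\bu)$ and $\ell(\bu_i) \ge 2$ for some $i$, while Lemma \ref{lem24061501} contributes the property (T) condition relating the last letters of $\bu$ and $\bq$.

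The engine of the derivation is a normal-form step. Using (\ref{f24061801}) and (\ref{f24061802}) I would show that every word $\bw$ with $\ell(\bw) \ge 2$ is derivably equal to a word of the shape $a_1 a_2 \cdots a_k\, t(\bw)$ in which $a_1, \ldots, a_k$ are the distinct letters of $p(\bw)$ in an arbitrary order: (\ref{f24061802}) permutes the letters preceding the last one and (\ref{f24061801}) deletes a repeated leading letter, so the prefix may be treated as the plain set $c(p(\bw))$. This is precisely the invariant $(c(p(\bw)), t(\bw))$ that governs the value of $\bw$ in $S_{(4, 30)}$, so two words with the same invariant become interchangeable under the basis.

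With the normal form in hand the derivation of $\bu \approx \bu + \bq$ splits along the cases of Lemma \ref{lem24061501}, mirroring the structure of the preceding proofs. If $\ell(\bq) = 1$ then $\bq = t(\bq) = t(\bu_{i_0})$ for some $i_0$, and (\ref{f24061807}) applied to $\bu_{i_0}$ gives $\bu \approx \bu + \bu_{i_0} \approx \bu + \bu_{i_0} + \bq$. If $\ell(\bq) \ge 2$, write $\bq = p(\bq)\, t(\bq)$; here I would first extract the required last letter as a summand via (\ref{f24061807}), then reconstruct $\bq$ by repeatedly prepending the letters of $c(p(\bq))$, each such step being an instance of (\ref{f24061804}) (with (\ref{f24061803}) used to manufacture the square summands that (\ref{f24061804}) may require), and finish by normalising the built-up word back to $\bq$.

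The hard part will be the bookkeeping in this last case, which is exactly where property (T) enters. To prepend a letter of $c(p(\bq))$ by (\ref{f24061804}) that letter must occur as the head of some summand of length $\ge 2$ (possibly after a reordering licensed by (\ref{f24061802})); the $S_4$ condition of Lemma \ref{lem24061501} is what guarantees this, since when $\bu$ satisfies (T) it forces $c(p(\bq))$ to be disjoint from $t(\bu)$, so that every prefix letter of $\bq$ already sits inside a proper prefix of some $\bu_j$ and is therefore available as a head. Verifying that each subcase of the (T)-condition supplies the heads needed to rebuild $\bq$, and that the reordering and square-building steps stay inside the variety, is the delicate point on which the completeness of (\ref{f24061801})--(\ref{f24061807}) rests; the remaining cases, where some $\bu_i$ is long or where $c(L_1(\bu)) \cap c(L_2(\bu)) \ne \emptyset$, should reduce quickly to the length-one situation as in the earlier propositions.
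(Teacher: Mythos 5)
Your treatment of the case where $\bu$ satisfies the property {\rm (T)} is sound and is essentially the paper's Case~1: the normal form $\bq\approx a_1\cdots a_k\,t(\bq)$ obtained from (\ref{f24061801})--(\ref{f24061802}), the observation that (T) forces $c(p(\bq))\cap t(\bu)=\emptyset$ so that every letter of $c(p(\bq))$ lies in $c(p(\bu_j))$ for some $j$ and can be moved to the head, and the rebuilding of $\bq$ by repeated use of (\ref{f24061804}) followed by extraction with (\ref{f24061807}). (A small slip: the copy of $S_4$ inside $S_{(4,30)}$ is $\{1,2,4\}$, not $\{1,3,4\}$ -- the latter is a copy of $S_{14}$ -- but the conclusions you draw from Lemma \ref{lem24061501} are the correct ones.)

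The genuine gap is the case where $\bu$ does \emph{not} satisfy {\rm (T)}, which you never actually argue. Your prepending move via (\ref{f24061804}) requires each letter of $c(p(\bq))$ to occur as the head of some summand after a permutation licensed by (\ref{f24061802}), i.e.\ to occur in $c(p(\bu_j))$ for some $j$; when (T) fails this can be false. For instance $\bu=yx+xz$ violates (T), and $\bu\approx\bu+zx$ is a valid identity of $S_{(4,30)}$, yet $z$ occurs in $\bu$ only as a tail letter, so no combination of (\ref{f24061801}), (\ref{f24061802}), (\ref{f24061804}) and (\ref{f24061807}) can ever place $z$ in a prefix position, and your construction stalls. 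This is precisely where (\ref{f24061803}) must do real work -- not, as you suggest, to ``manufacture square summands that (\ref{f24061804}) may require'' (identity (\ref{f24061804}) needs no squares): the paper's Case~2 exploits the failure of (T) to build a square summand such as $(p(\bu_{i_2})\bu_{i_1})^2$, feeds it into (\ref{f24061803}) to absorb the squares of all variables of $c(\bu)$ into one word, rearranges by (\ref{f24061801})--(\ref{f24061802}) into the form $\bp^2\bq^2$, and only then extracts $\bq$ with (\ref{f24061807}). Your closing sentence also imports the wrong case split: ``some $\bu_i$ is long or $c(L_1(\bu))\cap c(L_2(\bu))\neq\emptyset$'' is the trichotomy of Lemma \ref{lem24061201} for semirings containing $S_2$; it is irrelevant to $S_{(4,30)}$, whose residual case is exactly the failure of (T) and does not reduce to the length-one situation.
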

\begin{proof}
It is easy to verify that $S_{(4, 30)}$ satisfies the identities
(\ref{f24061801})--(\ref{f24061807}).
In the remainder we only need to show that every ai-semiring identity of $S_{(4, 30)}$
is derivable from (\ref{f24061801})--(\ref{f24061807}) and the identities defining $\mathbf{AI}$.
Let $\bu \approx \bu+\bq$ be such an identity,
where $\bu=\bu_1+\bu_2+\cdots+\bu_n$ and $\bu_i, \bq\in X^+$, $1 \leq i \leq n$.
Since $R_2$ is isomorphic to $\{1, 3\}$, we have that $R_2$ satisfies $\bu \approx \bu+\bq$ and so
$t(\bq)=t(\bu_i)$ for some $\bu_i \in \bu$.
If $\ell(\bq)=1$, then $\bq=t(\bq)$ and so $\bq=t(\bu_i)$.
Furthermore, we have
\[
\bu \approx \bu+\bu_i \approx \bu+p(\bu_i)t(\bu_i)\approx \bu+p(\bu_i)\bq\stackrel{(\ref{f24061807})}\approx \bu+p(\bu_i)\bq+\bq.
\]
This derives $\bu \approx \bu+\bq$.
Next, suppose that $\ell(\bq) \geq 2$. Then $p(\bq)$ is a nonempty word.
It is easy to see that $S_4$ is isomorphic to $\{1, 2, 4\}$
and so $S_4$ satisfies $\bu \approx \bu+\bq$.
By Lemma \ref{lem24061501} we have that $c(\bq)\subseteq  c(\bu)$,
$\ell(\bu_j)\geq 2$ for some $\bu_j \in \bu$, and $\bu$ satisfies the property {\rm (T)}
implies that $\bu+\bq$ satisfies the property {\rm (T)}.

\textbf{Case 1.} $\bu$ satisfies the property {\rm (T)}.
Then $\bu+\bq$ also satisfies the property {\rm (T)}.
This implies that $m(t(\bq), \bq)=1$ and so $t(\bq)\notin c(p(\bq))$.
By the identities (\ref{f24061801}) and (\ref{f24061802}) we derive
\begin{equation}\label{f240714001}
\bq \approx x_1x_2\cdots x_{k}t(\bq),
\end{equation}
where $\{x_1, x_2, \ldots, x_k\}=c(p(\bq))$.
We may suppose that
$\{x_1, \ldots, x_r\}$ and $\{y_1, \ldots, y_s\}$ are two disjoint subsets of $c(\bu)$ such that
\[
c(\bu)=\{x_1, \ldots, x_r\} \bigcup \{y_1, \ldots, y_s\},
\]
and
\[
t(\bu)=\{y_1, \ldots, y_s\},
\]
where $y_1=t(\bq)$.
Now we have
\begin{align*}
\bu
&\approx \bu+x_{k+1}x_{k+2}\cdots x_rx_1x_2\cdots x_kt(\bq)&&(\text{by}~(\ref{f24061801}), (\ref{f24061802}), (\ref{f24061804}))\\
&\approx \bu+x_{k+1}x_{k+2}\cdots x_r \bq &&  (\text{by}~  (\ref{f240714001}))                   \\
&\approx \bu+x_{k+1}x_{k+2}\cdots x_r \bq+\bq.   &&  (\text{by}~  (\ref{f24061807}))
\end{align*}
This implies the identity $\bu \approx \bu+\bq$.

\textbf{Case 2.} $\bu$ does not satisfy the property {\rm (T)}.
Consider the following two subcases.

\textbf{Subcase 2.1.} $m(t(\bu_{i_1}), \bu_{i_2}) \geq 2$ for some $\bu_{i_1}, \bu_{i_2} \in \bu$.
Then $t(\bu_{i_1}) \in c(p(\bu_{i_2}))$. We have
\begin{align*}
\bu
&\approx \bu+\bu_{i_1}+\bu_{i_2} \\
&\approx \bu+\bu_{i_1}+p(\bu_{i_2})t(\bu_{i_2})\\
&\approx \bu+\bu_{i_1}+p(\bu_{i_2})t(\bu_{i_2})+p(\bu_{i_2})\bu_{i_1} &&(\text{by}~(\ref{f24061804}))\\
&\approx \bu+\bu_{i_1}+p(\bu_{i_2})t(\bu_{i_2})+(p(\bu_{i_2})\bu_{i_1})^2 &&(\text{by}~(\ref{f24061801}), (\ref{f24061802}))\\
&\approx \bu+\bu_{i_1}+p(\bu_{i_2})t(\bu_{i_2})+\bp^2\bq^2&&(\text{by}~(\ref{f24061802}), (\ref{f24061803}))\\
&\approx \bu+\bu_{i_1}+p(\bu_{i_2})t(\bu_{i_2})+\bp^2\bq^2+\bq.&&(\text{by}~  (\ref{f24061807}))
\end{align*}
So we obtain $\bu \approx \bu+\bq$.

\textbf{Subcase 2.2.} $m(t(\bu_{i_1}), \bu_{i_2}) = 1$ and $t(\bu_{i_1})\neq t(\bu_{i_2})$ for some $\bu_{i_1}, \bu_{i_2} \in \bu$.
This is similar to the preceding case.
\end{proof}

\begin{remark}
It is easily verified that $S_{(4, 30)}$ is isomorphic to a subdirect product of $S_4$ and $S_{14}$.
So $\mathsf{V}(S_{(4, 30)})=\mathsf{V}(S_4, S_{14})$. On the other hand, one can check that
both $R_2$ and $M_2$ can be embedded into $S_{14}$ and that $S_{14}$ satisfies an equational basis of
$\mathsf{V}(R_2, M_2)$, which can be found in \cite{sr}. This implies that
$\mathsf{V}(S_{(4, 14)})=\mathsf{V}(R_2, M_2)$. Notice that $M_2$ can be embedded into $S_4$.
We immediately deduce that
\[
\mathsf{V}(S_{(4, 30)})=\mathsf{V}(S_4, R_2).
\]
\end{remark}

\begin{cor}
$S_{(4, 45)}$ is finitely based.
\end{cor}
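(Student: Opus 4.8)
The plan is to recognize $S_{(4, 45)}$ as the dual of $S_{(4, 30)}$, exactly in the spirit of the earlier corollary that deduced the finite basis property of $S_{(4, 16)}$ from that of $S_{(4, 41)}$. Concretely, I would form the opposite semiring of $S_{(4, 30)}$, whose multiplication is $a\ast b=b\cdot a$ and whose addition is unchanged, and check that this opposite is isomorphic to $S_{(4, 45)}$.

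For the isomorphism, note first that any bijection of the carrier fixing $1$ and permuting the atoms $\{2,3,4\}$ is an automorphism of the common additive reduct depicted in Figure \ref{figure1}, since that reduct is the height-$1$ semilattice with top $1$. Thus it suffices to find such a permutation $\sigma$ carrying the transpose of the Cayley table of $S_{(4, 30)}$ onto the Cayley table of $S_{(4, 45)}$. I would pin down $\sigma$ from structural landmarks: in $S_{(4, 45)}$ the element $4$ is a left zero (its row is constant $4$) while $3$ is a right identity (its column reads $1,2,3,4$), whereas in the opposite of $S_{(4, 30)}$ the element $3$ is a left zero and $4$ is a right identity. This forces $\sigma$ to swap $3$ and $4$ and fix $2$, after which one verifies entrywise that $\sigma$ indeed intertwines the two multiplications.

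The conclusion then rests on the general principle that an ai-semiring is finitely based if and only if its dual is. This holds because reversing every word in an ai-semiring identity sets up a bijection between the identities satisfied by $S$ and those satisfied by its dual, and because the one-step rewriting described in Lemma \ref{lem1} is symmetric under word reversal; hence derivability is preserved, so a finite basis for one semiring yields, upon reversing each defining identity, a finite basis for the other. Applying this to Proposition \ref{pro24071030}, which furnishes an explicit finite basis for $\mathsf{V}(S_{(4, 30)})$, we obtain a finite basis for $\mathsf{V}(S_{(4, 45)})$, and the corollary follows.

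The only genuinely computational step is the entrywise verification of the duality in the second paragraph; the rewriting-symmetry argument is routine. I therefore expect the isomorphism check, and in particular correctly pinning down the permutation $\sigma$ of the atoms, to be the main (though modest) obstacle.
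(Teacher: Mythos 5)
Your proposal is correct and follows exactly the paper's own argument: the paper likewise observes that $S_{(4, 45)}$ is isomorphic to the dual of $S_{(4, 30)}$ and invokes Proposition \ref{pro24071030}. You merely spell out the details (the permutation swapping $3$ and $4$, and the word-reversal symmetry of derivability) that the paper leaves implicit, and these check out.
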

\begin{proof}
It is easy to see that $S_{(4, 45)}$ is isomorphic to the dual of $S_{(4, 30)}$.
By Proposition \ref{pro24071030} we immediately deduce that $S_{(4, 45)}$ is finitely based.
\end{proof}

\begin{pro}\label{pro24061701}
$\mathsf{V}(S_{(4, 47)})$ is the ai-semiring variety defined by the identities
\begin{align}
x^2y& \approx xy; \label{f24061549}\\
x_1x_2x_3x_4& \approx x_1x_3x_2x_4; \label{f24061550}\\
x^2&\approx x^2+x;\label{f24061553}\\
x^2y^2& \approx x^2y^2+x^2;  \label{f24061554}\\
x+y^2& \approx x+y^2+x^2y^2;    \label{f24061551}\\
x+y^2& \approx x+y^2+y^2x^2;    \label{f240615510}\\
x+yz&\approx x+yz+yx; \label{f24061552}\\
xy+zx & \approx zy+x^2.  \label{f24061554111}
\end{align}
\end{pro}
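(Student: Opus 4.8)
The plan is to follow the two-step template of the earlier propositions in this section. The easy direction is to check that $S_{(4,47)}$ satisfies (\ref{f24061549})--(\ref{f24061554111}). This is routine once one records how a product evaluates: writing $\varphi(\bw)$ for the image of a word $\bw$ under a homomorphism, $\varphi(\bw)=4$ exactly when $\varphi(h(\bw))=4$, $\varphi(\bw)=3$ exactly when every letter of $\bw$ is sent to $3$, $\varphi(\bw)=2$ exactly when $t(\bw)$ is sent to $2$ and every other letter to $3$, and $\varphi(\bw)=1$ otherwise. Since the additive reduct is flat with top $1$, an identity $\bu\approx\bu+\bq$ holds iff, for every $\varphi$ with $\varphi(\bu)\in\{2,3,4\}$, also $\varphi(\bq)=\varphi(\bu)$; checking this for each of the eight axioms is a short inspection. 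The substantive direction is to show that every identity of $S_{(4,47)}$, which by the reduction of \S1 we may take in the form $\bu\approx\bu+\bq$ with $\bu=\bu_1+\cdots+\bu_n$ and $\bu_i,\bq\in X^+$, is derivable from (\ref{f24061549})--(\ref{f24061554111}) and the laws of $\mathbf{AI}$.

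Next I would read off necessary conditions from the subalgebras $\{1,3\}\cong M_2$, $\{1,4\}\cong L_2$, $\{1,2\}\cong T_2$ and $\{1,2,3\}\cong S_4$. Lemma \ref{nlemma1} applied to $M_2$ and $L_2$ yields $c(\bq)\subseteq c(\bu)$ and $h(\bq)=h(\bu_i)$ for some $i$, while Lemma \ref{lem24061501} applied to $S_4$ yields that $\ell(\bu_j)\ge 2$ for some $j$ and that $\bu$ satisfying property {\rm (T)} forces $\bu+\bq$ to satisfy {\rm (T)}. (These conditions are in fact also sufficient, because the congruences collapsing $\{1,4\}$ and collapsing $\{1,2,3\}$ exhibit $S_{(4,47)}$ as a subdirect product of $S_4$ and $L_2$; but establishing the finite basis needs the syntactic derivation, not merely this characterization.)

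The derivation then proceeds by cases, in parallel with Proposition \ref{pro24071030}. The axioms play the following roles: (\ref{f24061549}) and (\ref{f24061550}) normalize every word of length $\ge 3$, absorbing interior repetitions and permuting interior letters to expose square factors; (\ref{f24061551}), (\ref{f240615510}) and (\ref{f24061554}) manufacture square summands $x^2,y^2$ from a single letter together with an existing square; (\ref{f24061553}) turns a square summand $x^2$ into the single letter $x$; and (\ref{f24061552}), (\ref{f24061554111}) are the two local rules that fix the head and tail of a length-$2$ target. I split first on $\ell(\bq)$. For $\ell(\bq)=1$, say $\bq=x=h(\bu_i)$: if $\bu$ has property {\rm (T)} then preservation of {\rm (T)} forces the unique occurrence of $x$ in $\bu_i$ to be both its head and its tail, so $\bu_i=x$ and the identity is trivial; if $\bu$ fails {\rm (T)}, a witnessing pair supplies a repeated or misplaced tail from which (\ref{f24061549})--(\ref{f24061554}) produce $x^2$, and then (\ref{f24061553}) gives $x$. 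For $\ell(\bq)\ge 2$ I split further on whether $\bu$ satisfies {\rm (T)}, handling the {\rm (T)} case by the rearrangement-and-extraction pattern of Proposition \ref{pro24071030}.

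The hard part is the case $\ell(\bq)=2$, say $\bq=xy$, with every $\bu_i$ of length $\le 2$. Viewing each length-$2$ summand $ab$ as a directed edge $a\to b$ on $c(L_2(\bu))$, the rule (\ref{f24061554111}) composes consecutive edges $z\to x$ and $x\to y$ into $z\to y$ while emitting the square $x^2$, and (\ref{f24061552}) redirects the tail of an edge out of $x$ to any vertex available as a single-letter summand. Thus to build the edge $x\to y$ one would first use an in-edge and an out-edge at $y$ with (\ref{f24061554111}) to create $y^2$, then (\ref{f24061553}) to obtain the single letter $y$, and finally (\ref{f24061552}) to redirect an out-edge of $x$ (which exists since $x=h(\bu_i)$) onto $y$. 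The crux is to prove, from the head condition given by $L_2$ and the tail information extracted from $S_4$ through Lemma \ref{lem24061501}, that the requisite edges and hence a suitable directed path always exist, and that the rewrites compose to deliver exactly $xy$ rather than some other head-tail pair; this is the analogue of the connectivity argument used in the proof for $S_{(4,14)}$, and I expect it to be where the real work lies.
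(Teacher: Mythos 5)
Your setup is right: the evaluation rules for $S_{(4,47)}$, the use of $\{1,4\}\cong L_2$ to get $h(\bq)=h(\bu_i)$, and the use of $\{1,2,3\}\cong S_4$ with Lemma \ref{lem24061501} to get $c(\bq)\subseteq c(\bu)$, a summand of length $\ge 2$, and preservation of property {\rm (T)} all match the paper. Your treatment of $\ell(\bq)=1$ is also sound. But the proposal stops short of a proof exactly where you say ``the real work lies,'' and the case you single out as the crux is not the right one. You frame the hard case as ``$\ell(\bq)=2$ with every $\bu_i$ of length $\le 2$'' and propose a directed-graph connectivity argument in the style of $S_{(4,14)}$ and of the Section~4 propositions. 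That framing is inherited from semirings containing $S_2$, where Lemma \ref{lem24061201} bounds all word lengths by $2$. Here the controlling three-element subalgebra is $S_4$, not $S_2$: Lemma \ref{lem24061501} imposes no length bound whatsoever on $\bq$ or on the $\bu_i$, so the bounded-length subcase you concentrate on does not cover the problem, and the genuinely demanding derivations (property {\rm (T)} with $\ell(\bq)\ge 2$ arbitrary, and the failure of {\rm (T)} with $\bq$ arbitrary) are left at the level of ``handle by the pattern of Proposition \ref{pro24071030}.''

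Moreover, no connectivity argument is needed at all. The identities $x^2y\approx xy$ and $x_1x_2x_3x_4\approx x_1x_3x_2x_4$ reduce any word of length $\ge 2$ to the data (head, set of interior letters, tail), and $x+yz\approx x+yz+yx$ lets you insert any letter of $c(\bu)$ into the interior of an existing summand; so reachability of $\bq$ is never a path-existence question. The paper's actual argument splits only on whether $\bu$ satisfies {\rm (T)}: in the {\rm (T)} case it builds $(p(\bq))^2(x_1\cdots x_m)^2y_1$ with $y_1=t(\bq)$ and peels off $p(\bq)y_1=\bq$ using $x^2y^2\approx x^2y^2+x^2$ and $x^2\approx x^2+x$; in the non-{\rm (T)} case it uses the witnessing pair to manufacture a square word covering $c(\bu)$, rearranges it to $\bq^2\bp^2$, and extracts $\bq$ the same way. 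To complete your proof you would need to carry out these derivations explicitly; as it stands the proposal is an accurate inventory of the axioms' roles together with a plan whose central step is both missing and aimed at the wrong case structure.
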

\begin{proof}
It is easy to verify that $S_{(4, 47)}$ satisfies the identities
(\ref{f24061549})--(\ref{f24061554111}).
In the remainder it is enough to show that every ai-semiring identity of $S_{(4, 47)}$
is derivable from (\ref{f24061549})--(\ref{f24061554111}) and the identities defining $\mathbf{AI}$.
Let $\bu \approx \bu+\bq$ be such an identity, where $\bu=\bu_1+\bu_2+\cdots+\bu_n$ and $\bu_i, \bq\in X^+$, $1 \leq i \leq n$.
Since $L_2$ is isomorphic to $\{1, 4\}$, it follows that $L_2$ satisfies $\bu \approx \bu+\bq$ and so
$h(\bq)=h(\bu_i)$ for some $\bu_i \in \bu$.
On the other hand, it is easy to see that $S_4$ is isomorphic to $\{1, 2, 3\}$
and so $S_4$ satisfies $\bu \approx \bu+\bq$.
By Lemma \ref{lem24061501} we have that $c(\bq)\subseteq  c(\bu)$,
$\ell(\bu_r)\geq 2$ for some $\bu_r \in \bu$, and $\bu$ satisfies the property {\rm (T)}
implies that $\bu+\bq$ satisfies the property {\rm (T)}.

\textbf{Case 1.} $\bu$ satisfies the property {\rm (T)}.
Then $\bu+\bq$ also satisfies the property {\rm (T)}.
So we have that
\[
(\forall \bu_j \in \bu) ~ m(t(\bu_j), \bq)\leq 1; m(t(\bu_j), \bq)= 1\Rightarrow t(\bu_j)=t(\bq);
\]
\[
m(t(\bq), \bq)= 1; m(t(\bq), \bu_j)\leq 1; m(t(\bq), \bu_j)= 1\Rightarrow t(\bq)=t(\bu_j).
\]
Recall that $\bu_i$ is a word in $\bu$ such that $h(\bu_i)=h(\bq)$.
We shall consider the following two subcases.

\textbf{Subcase 1.1.}
$\ell(\bu_i)=1$. Then $t(\bu_i)=h(\bu_i)$ and so $t(\bu_i)\in c(\bq)$.
It follows that $t(\bu_i)=t(\bq)$ and so $t(\bq)=h(\bq)$.
Thus $\ell(\bq)=1$ and so $\bq=\bu_i$.
Hence $\bu \approx \bu+\bq$ is trivial.

\textbf{Subcase 1.2.}
$\ell(\bu_i)\geq 2$. Then $p(\bu_i)$ is a nonempty word.
Since $c(\bq)\subseteq c(\bu)$,
there exists $\bu_r\in \bu$ such that $t(\bq)\in c(\bu_r)$ and so $t(\bq)=t(\bu_r)$.

If $\ell(\bq)=1$, then $\bq=t(\bq)=h(\bq)$. Furthermore, we have
\begin{align*}
\bu
&\approx \bu+\bu_i+\bu_r\\
&\approx \bu+h(\bu_i)s(\bu_i)+p(\bu_r)t(\bu_r)\\
&\approx \bu+\bq s(\bu_i)+p(\bu_r)\bq  \\
&\approx  \bu+\bq s(\bu_i)+p(\bu_r)\bq +\bq^2&&(\text{by}~(\ref{f24061552}), (\ref{f24061554111}))\\
&\approx \bu+\bq s(\bu_i)+p(\bu_r)\bq +\bq^2+\bq.  &&(\text{by}~(\ref{f24061553}))
\end{align*}
This implies the identity $\bu \approx \bu+\bq$.

Now suppose that $\ell(\bq)\geq 2$. Then $h(\bq) \neq t(\bq)$.
Since $\bu$ satisfies the property {\rm (T)}, we may suppose that
$\{x_1, \ldots, x_m\}$ and $\{y_1, \ldots, y_s\}$ are two disjoint subsets of $c(\bu)$ such that
\[
c(\bu)=\{x_1, \ldots, x_m\} \bigcup \{y_1, \ldots, y_s\},
\]
and
\[
t(\bu)=\{y_1, \ldots, y_s\},
\]
where $x_1=h(\bq)$ and $y_1=t(\bq)$.
Then
\begin{align*}
\bu
&\approx p(\bu_i)t(\bu_i)+\bu\\
&\approx \bu+x_1^2\cdots x_m^2y_1&&(\text{by}~(\ref{f24061549}), (\ref{f24061550}), (\ref{f24061552}))\\
&\approx \bu+(p(\bq))^2x_1^2\cdots x_m^2y_1                    &&(\text{by}~(\ref{f24061549}), (\ref{f24061550}))\\
&\approx \bu+(p(\bq))^2(x_1\cdots x_m)^2y_1                    &&(\text{by}~(\ref{f24061550}))                  \\
&\approx \bu+(p(\bq))^2(x_1\cdots x_m)^2y_1+(p(\bq))^2y_1      &&(\text{by}~(\ref{f24061554}))\\
&\approx \bu+(p(\bq))^2(x_1\cdots x_m)^2y_1+(p(\bq))^2y_1+ p(\bq)y_1           &&(\text{by}~(\ref{f24061553}))\\
&\approx \bu+(p(\bq))^2(x_1\cdots x_m)^2y_1+(p(\bq))^2y_1+\bq.
\end{align*}
This derives $\bu\approx \bu+\bq$.

\textbf{Case 2.} $\bu$ does not satisfy the property {\rm (T)}.
Consider the following two subcases:

\textbf{Subcase 2.1.} $m(t(\bu_j), \bu_k)\geq 2$ for some $\bu_j, \bu_k \in \bu$. Then $t(\bu_j) \in c(p(\bu_k))$ and so
\begin{align*}
\bu
&\approx \bu+\bu_j+\bu_k\\
&\approx \bu+\bu_j+p(\bu_k)t(\bu_k)\\
&\approx \bu+\bu_j+p(\bu_k)t(\bu_k)+p(\bu_k)\bu_j.&&(\text{by}~(\ref{f24061552}))
\end{align*}
This derives
\begin{equation}\label{f240714100}
\bu \approx \bu+p(\bu_k)\bu_j.
\end{equation}
Furthermore, we have
\begin{align*}
\bu
&\approx \bu+p(\bu_k)\bu_j && (\text{by}~(\ref{f240714100})) \\
&\approx \bu+p(\bu_k)p(\bu_j)t(\bu_j) \\
&\approx \bu+p(\bu_k)^2p(\bu_j)^2t(\bu_j)^2&&(\text{by}~(\ref{f24061549}), (\ref{f24061550}))\\
&\approx \bu+(p(\bu_k)\bu_j)^2 &&(\text{by}~(\ref{f24061550}))\\
&\approx \bu+(h(\bq)x_1\cdots x_m)^2 &&(\text{by}~(\ref{f24061551}), (\ref{f240615510}))\\
&\approx \bu+\bq^2\bp^2 &&(\text{by}~(\ref{f24061550}))\\
&\approx \bu+\bq^2\bp^2+\bq^2&&(\text{by}~(\ref{f24061554}))\\
&\approx \bu+\bq^2\bp^2+\bq^2+\bq, &&(\text{by}~(\ref{f24061553}))
\end{align*}
where $c(\bu)=\{h(\bq), x_1, \ldots, x_m\}$.
This implies the identity $\bu \approx \bu+\bq$.

\textbf{Subcase 2.2.} $m(t(\bu_j), \bu_k)=1$ and $t(\bu_j) \neq t(\bu_k)$ for some $\bu_j, \bu_k \in \bu$.
Then $m(t(\bu_j), p(\bu_k))=1$ and so $m(t(\bu_j), p(\bu_k)p(\bu_k)t(\bu_k))=2$. Moreover, we have
\[
\bu \approx \bu+\bu_k \approx \bu+p(\bu_k)t(\bu_k) \stackrel{(\ref{f24061549})}\approx \bu+p(\bu_k)p(\bu_k)t(\bu_k).
\]
The remaining steps are similar to the preceding case.
\end{proof}

\begin{remark}
It is easy to verify that $S_{(4, 47)}$ is isomorphic to a subdirect product of $S_4$ and $S_9$.
So $\mathsf{V}(S_{(4, 47)})=\mathsf{V}(S_4, S_9)$.
On the other hand, it is easy to see that both $L_2$ and $M_2$ can be embedded into $S_9$.
Also, $S_9$ satisfies an equational basis of $\mathsf{V}(L_2, M_2)$ that can be found in \cite{sr}.
It follows that $\mathsf{V}(S_9)=\mathsf{V}(L_2, M_2)$
and so $\mathsf{V}(S_{(4, 47)})=\mathsf{V}(S_4, L_2, M_2)$.
Since $M_2$ can be embedded into $S_4$, we therefore have that
\[
\mathsf{V}(S_{(4, 47)})=\mathsf{V}(S_4, L_2).
\]
\end{remark}

\begin{cor}
The ai-semiring $S_{(4, 21)}$ is finitely based.
\end{cor}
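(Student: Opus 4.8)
The plan is to recognize $S_{(4,21)}$ as the dual of a semiring whose finite basis property has already been established, exactly as in the proofs for $S_{(4,16)}$ and $S_{(4,45)}$. First I would form the opposite multiplication $a*b:=b\cdot a$ on $\{1,2,3,4\}$ by transposing the Cayley table of $S_{(4,21)}$ given in Table \ref{tb1}; reading its columns as rows yields the four rows $(1,1,1,1)$, $(1,1,1,1)$, $(1,2,3,1)$ and $(4,4,4,4)$.

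Next I would observe that this transposed table is precisely the multiplication table of $S_{(4,47)}$. Since the additive reduct of every $S_{(4,k)}$ is the same height-one semilattice fixed by Figure \ref{figure1}, the additive structures automatically agree, and so the identity map on the carrier set $\{1,2,3,4\}$ is an isomorphism from $S_{(4,21)}$ onto the dual of $S_{(4,47)}$.

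Finally I would invoke the duality principle for ai-semirings that underlies the earlier corollaries in this section: reversing every word in an identity carries the identities of a semiring bijectively onto the identities of its dual, so any finite equational basis for one transforms term-by-term into a finite equational basis for the other. Hence finite basability is preserved under passage to the dual. Applying this to the explicit finite basis for $\mathsf{V}(S_{(4,47)})$ furnished by Proposition \ref{pro24061701}, we conclude that $S_{(4,21)}$ is finitely based. The only point requiring verification is the transposition above, which is routine; there is no genuine obstacle, the duality principle being used in precisely the same way as for $S_{(4,16)}$ and $S_{(4,45)}$.
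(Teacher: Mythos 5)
Your proposal is correct and follows exactly the paper's own argument: the paper likewise observes that $S_{(4,21)}$ and $S_{(4,47)}$ have dual multiplications and then cites Proposition \ref{pro24061701}. Your transposition of the Cayley table checks out, and the duality principle (reversing words in identities) is applied just as in the corollaries for $S_{(4,16)}$ and $S_{(4,45)}$.
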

\begin{proof}
It is easy to see that $S_{(4, 21)}$ and $S_{(4, 47)}$ have dual multiplications.
By Proposition \ref{pro24061701} we immediately deduce that $S_{(4, 21)}$ is finitely based.
\end{proof}

\begin{pro}\label{pro24071020}
$\mathsf{V}(S_{(4, 48)})$ is the ai-semiring variety defined by the identities
\begin{align}
x^2y& \approx xy; \label{f24062101}\\
xyz& \approx yxz; \label{f24062102}\\
x^2&\approx x^2+x;\label{f24062105}\\
x+y^2& \approx x+y^2+x^2y^2;    \label{f24062103}\\
x+yz&\approx x+yz+yx; \label{f24062104}\\
x+xyz& \approx x+xyz+xy;  \label{f24062106}\\
z+xyz& \approx z+xyz+yz.  \label{f24062107}
\end{align}
\end{pro}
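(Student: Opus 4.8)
The plan is to follow the template established for $S_{(4, 47)}$ in Proposition \ref{pro24061701}. Soundness, i.e. that $S_{(4, 48)}$ satisfies (\ref{f24062101})--(\ref{f24062107}), is a routine finite verification and I would dispose of it in a single sentence. For completeness, by the standard reduction it suffices to treat an arbitrary identity $\bu \approx \bu + \bq$ of $S_{(4, 48)}$ with $\bu = \bu_1 + \cdots + \bu_n$ and $\bq \in X^+$, the goal being to derive it from (\ref{f24062101})--(\ref{f24062107}) together with the identities defining $\mathbf{AI}$. First I would record the subalgebras on which the identity must therefore also hold. The essential ones are the two-element subreduct $\{1, 4\} \cong D_2$ and the three-element subalgebra $\{1, 2, 3\} \cong S_4$ (the subreducts $\{1, 2\} \cong T_2$ and $\{1, 3\} \cong M_2$ also embed, but give information subsumed by $S_4$). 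Note that, in contrast to $S_{(4, 47)}$, the subreduct on $\{1, 4\}$ is $D_2$ rather than $L_2$, precisely because $4$ has become a two-sided absorbing element. Reading off Lemma \ref{lem24061501} for $S_4$ and Lemma \ref{nlemma1}(4) for $D_2$ yields the structural constraints that drive the whole argument: $c(\bq) \subseteq c(\bu)$, $\ell(\bu_j) \ge 2$ for some $\bu_j \in \bu$, preservation of property (T) from $\bu$ to $\bu + \bq$, and $c(\bu_{i_0}) \subseteq c(\bq)$ for some $\bu_{i_0} \in \bu$.

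Next I would use (\ref{f24062101}) and (\ref{f24062102}) as rewriting tools to put every word of length $\ge 2$ into a normal form: commuting all letters except the tail and then collapsing repeated non-tail letters lets me replace any $\bu_i$ (and $\bq$) by a product $z_1 z_2 \cdots z_k\, t(\bu_i)$ with the $z_j$ distinct. With this normal form in hand, the argument splits exactly as in Propositions \ref{pro24071030} and \ref{pro24061701}, according to whether $\bu$ satisfies property (T). When $\bu$ satisfies (T), so does $\bu + \bq$, which forces $m(t(\bq), \bq) = 1$ and $t(\bq) = t(\bu_k)$ for some $k$; I would then construct $\bq$ starting from the summand $\bu_{i_0}$ guaranteed by the $D_2$-constraint, using (\ref{f24062103}) and (\ref{f24062104}) to generate a block of squared letters of the correct content, (\ref{f24062105}) to shed a square down to a single linear factor, and the unsymmetric identities (\ref{f24062106}), (\ref{f24062107}) to install the prescribed head and tail of $\bq$. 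When $\bu$ fails (T) there are $\bu_{i_1}, \bu_{i_2} \in \bu$ with either $m(t(\bu_{i_1}), \bu_{i_2}) \ge 2$ or $m(t(\bu_{i_1}), \bu_{i_2}) = 1$ with $t(\bu_{i_1}) \ne t(\bu_{i_2})$; in the first subcase (\ref{f24062104}) introduces $p(\bu_{i_2})\bu_{i_1}$ as a new summand, whose square (after normalisation) has content $c(\bu)$ and serves to reach $\bq$, and the second subcase reduces to the first after one application of (\ref{f24062101}).

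The step I expect to be the main obstacle is the (T) case with $\ell(\bq) \ge 2$. Here I must simultaneously honour the two content inclusions $c(\bu_{i_0}) \subseteq c(\bq) \subseteq c(\bu)$ and the tail bookkeeping coming from property (T), and the only identities that can install a prescribed head and a prescribed tail in a freshly built word are the two unsymmetric ones, (\ref{f24062106}) fixing the head and (\ref{f24062107}) fixing the tail. Getting the order of applications right---first growing a squared-content block via (\ref{f24062103})/(\ref{f24062104}), then trimming it to the linear word $z_1 \cdots z_k\, t(\bq)$ via (\ref{f24062105}) while (\ref{f24062106}), (\ref{f24062107}) pin down $h(\bq)$ and $t(\bq)$---is the delicate part. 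It is also exactly where the $D_2$-constraint (rather than the head constraint used for $S_{(4, 47)}$) becomes indispensable, since it is what guarantees that some full word of $\bu$ already lies inside $c(\bq)$ and can serve as the seed of the construction.
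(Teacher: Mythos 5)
Your proposal follows essentially the same route as the paper's proof: the same two subalgebras $\{1,4\}\cong D_2$ and $\{1,2,3\}\cong S_4$ supply exactly the structural constraints you list, the normal form via (\ref{f24062101}) and (\ref{f24062102}) is used in the same way, and the case split on property (T) with the two subcases for its failure matches the paper's Case 1/Case 2 division. The only cosmetic difference is the bookkeeping of which identity does what: in the paper's (T) case the word is grown with (\ref{f24062104}) and trimmed with (\ref{f24062107}) alone (with (\ref{f24062103}), (\ref{f24062105}), (\ref{f24062106}) reserved for the non-(T) case), but your overall scheme is the one the paper carries out.
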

\begin{proof}
It is easy to verify that $S_{(4, 48)}$ satisfies the identities
(\ref{f24062101})--(\ref{f24062107}).
In the remainder we only need to show that every ai-semiring identity of $S_{(4, 48)}$
is derivable from (\ref{f24062101})--(\ref{f24062107}) and the identities defining $\mathbf{AI}$.
Let $\bu \approx \bu+\bq$ be such a nontrivial identity, where $\bu=\bu_1+\bu_2+\cdots+\bu_n$ and $\bu_i, \bq\in X^+$, $1 \leq i \leq n$.
Since $D_2$ is isomorphic to $\{1, 4\}$, we have that $D_2$ satisfies $\bu \approx \bu+\bq$ and so
$c(\bu_i)\subseteq c(\bq)$ for some $\bu_i \in \bu$.
On the other hand, it is easy to see that $S_4$ is isomorphic to $\{1, 2, 3\}$
and so $S_4$ satisfies $\bu \approx \bu+\bq$.
By Lemma \ref{lem24061501} we have that $c(\bq)\subseteq  c(\bu)$,
$\ell(\bu_j)\geq 2$ for some $\bu_j \in \bu$, and $\bu$ satisfies the property {\rm (T)}
implies that $\bu+\bq$ satisfies the property {\rm (T)}.

\textbf{Case 1.} $\bu$ satisfies the property {\rm (T)}.
Then $\bu+\bq$ also satisfies the property {\rm (T)}.
This implies that $t(\bq)\notin c(p(\bq))$ and $t(\bq)=t(\bu_i)$.
By the identities (\ref{f24062101}) and (\ref{f24062102}) we derive
\begin{equation}\label{f240620001}
\bq \approx x_1x_2\cdots x_{k}t(\bq),
\end{equation}
where $\{x_1, x_2, \ldots, x_k\}=c(p(\bq))$, $k \geq 0$.
We may suppose that
$\{x_1, \ldots, x_r\}$ and $\{y_1, \ldots, y_s\}$ are two disjoint subsets of $c(\bu)$ such that
\[
c(\bu)=\{x_1, \ldots, x_r\} \bigcup \{y_1, \ldots, y_s\},
\]
and
\[
t(\bu)=\{y_1, \ldots, y_s\},
\]
where $y_1=t(\bq)$.
Now we have
\begin{align*}
\bu
&\approx \bu+x_1x_2\cdots x_kx_{k+1}x_{k+2}\cdots x_rt(\bq)&&(\text{by}~(\ref{f24062101}), (\ref{f24062102}), (\ref{f24062104}))\\
&\approx \bu+\bu_i+\bp p(\bq) \bu_i&&  (\text{by}~(\ref{f24062101}), (\ref{f24062102}))                   \\
&\approx \bu+\bu_i+\bp p(\bq) \bu_i+p(\bq)\bu_i &&  (\text{by}~  (\ref{f24062107})) \\
&\approx \bu+p(\bq)\bu_i\\
&\approx \bu+\bq. &&  (\text{by}~  (\ref{f24062101}), (\ref{f24062102}))
\end{align*}

\textbf{Case 2.} $\bu$ does not satisfy the property {\rm (T)}.
Consider the following two subcases.

\textbf{Subcase 2.1.} $m(t(\bu_{i_1}), \bu_{i_2}) \geq 2$ for some $\bu_{i_1}, \bu_{i_2} \in \bu$.
Then $t(\bu_{i_1}) \in c(p(\bu_{i_2}))$ and so we have
\begin{align*}
\bu
&\approx \bu+\bu_{i_1}+\bu_{i_2} \\
&\approx \bu+\bu_{i_1}+p(\bu_{i_2})t(\bu_{i_2})\\
&\approx \bu+\bu_{i_1}+p(\bu_{i_2})t(\bu_{i_2})+p(\bu_{i_2})\bu_{i_1}. &&(\text{by}~(\ref{f24062104}))
\end{align*}
This implies the identity
\begin{equation}\label{f24071501}
\bu\approx \bu+p(\bu_{i_2})\bu_{i_1}.
\end{equation}
Furthermore, we can deduce
\begin{align*}
\bu
&\approx \bu+p(\bu_{i_2})\bu_{i_1}\\
&\approx \bu+(p(\bu_{i_2})\bu_{i_1})^2 &&(\text{by}~(\ref{f24062101}), (\ref{f24062102}))\\
&\approx \bu+\bp^2 &&(\text{by}~(\ref{f24062102}), (\ref{f24062103}))\\
&\approx \bu+\bu_i+\bu_i\bq^2\bp^2&&(\text{by}~(\ref{f24062101}), (\ref{f24062102}))\\
&\approx \bu+\bu_i+\bu_i\bq^2\bp^2+\bu_i\bq^2&&(\text{by}~  (\ref{f24062106})) \\
&\approx \bu+\bu_i+\bu_i\bq^2\bp^2+\bq^2  &&(\text{by}~(\ref{f24062101}), (\ref{f24062102})) \\
&\approx \bu+\bu_i+\bu_i\bq^2\bp^2+\bq^2+\bq, &&(\text{by}~  (\ref{f24062105}))
\end{align*}
where $c(\bp)=c(\bu)$. So we obtain the identity $\bu\approx \bu+\bq$.

\textbf{Subcase 2.2.} $m(t(\bu_{i_1}), \bu_{i_2}) = 1$ and $t(\bu_{i_1})\neq t(\bu_{i_2})$ for some $\bu_{i_1}, \bu_{i_2} \in \bu$.
This is similar to the preceding case.
\end{proof}

\begin{remark}
It is easily verified that $S_{(4, 48)}$ is isomorphic to a subdirect product of
$S_4$ and $S_{15}$. So $\mathsf{V}(S_{(4, 48)})=\mathsf{V}(S_4, S_{15})$.
On the other hand, we have that both $M_2$ and $D_2$ can be embedded into $S_{15}$
and that $S_{15}$ satisfies an equational basis of $\mathsf{V}(M_2, D_2)$,
which can be found in \cite{sr}.
This implies that $\mathsf{V}(S_{15})=\mathsf{V}(M_2, D_2)$.
Since $M_2$ can be embedded into $S_4$, we deduce that
\[
\mathsf{V}(S_{(4, 48)})=\mathsf{V}(S_4, D_2).
\]
\end{remark}

Notice that $S_{(4, 46)}$ and $S_{(4, 48)}$ have dual multiplications.
By Proposition \ref{pro24071020} we immediately deduce
\begin{cor}
$S_{(4, 46)}$ is finitely based.
\end{cor}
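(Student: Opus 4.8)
The plan is to prove the corollary by a pure duality argument, exactly as the remark preceding it indicates. First I would confirm directly from Table~\ref{tb1} that the multiplication table of $S_{(4,46)}$ is the transpose of that of $S_{(4,48)}$; that is, the product $a\cdot b$ in $S_{(4,46)}$ equals $b\cdot a$ in $S_{(4,48)}$ for all $a,b$. Hence $S_{(4,46)}$ is isomorphic to the \emph{opposite} semiring $S_{(4,48)}^{\mathrm{op}}$, which carries the same semilattice addition as $S_{(4,48)}$ but the reversed multiplication. Since the finite basis property is preserved under isomorphism, it suffices to show that $S_{(4,48)}^{\mathrm{op}}$ is finitely based.

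For this I would invoke the standard reversal duality for ai-semirings. For a word $\bw=x_1x_2\cdots x_k\in X^+$ let $\bw^{*}=x_k\cdots x_2x_1$ denote its reversal, extended additively to ai-semiring terms by $(\bw_1+\cdots+\bw_n)^{*}=\bw_1^{*}+\cdots+\bw_n^{*}$. The key observation is that for any ai-semiring $S$ and any ai-semiring identity $\bu\approx\bv$, the semiring $S$ satisfies $\bu\approx\bv$ if and only if $S^{\mathrm{op}}$ satisfies $\bu^{*}\approx\bv^{*}$. To see this, recall that since $P_f(X^+)$ is free, every assignment $X\to S$ extends uniquely to a homomorphism $\varphi\colon P_f(X^+)\to S$; the same assignment, now read as a map $X\to S^{\mathrm{op}}$, extends to a homomorphism $\varphi'\colon P_f(X^+)\to S^{\mathrm{op}}$, and one checks by induction on word length that $\varphi'(\bw^{*})=\varphi(\bw)$ for every $\bw\in X^+$, because reversing the multiplication exactly compensates for reversing the order of the letters. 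Thus $\bu\approx\bv$ fails in $S$ under some $\varphi$ precisely when $\bu^{*}\approx\bv^{*}$ fails in $S^{\mathrm{op}}$ under the corresponding $\varphi'$.

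Consequently, if $\Sigma$ is a finite equational basis for $\mathsf{V}(S)$, then its reversal $\Sigma^{*}=\{\bu^{*}\approx\bv^{*}\mid(\bu\approx\bv)\in\Sigma\}$ is a finite equational basis for $\mathsf{V}(S^{\mathrm{op}})$. Applying this to $S=S_{(4,48)}$, whose finite basis is given by the identities (\ref{f24062101})--(\ref{f24062107}) of Proposition~\ref{pro24071020}, the reversed set $\Sigma^{*}$ is a finite equational basis for $\mathsf{V}(S_{(4,48)}^{\mathrm{op}})=\mathsf{V}(S_{(4,46)})$, so $S_{(4,46)}$ is finitely based. There is essentially no obstacle here: the only point requiring care is to set up the reversal map so that it is compatible with the opposite multiplication, and this compatibility is built into the definition of $S^{\mathrm{op}}$, so the argument goes through verbatim once the two tables have been checked to be transposes.
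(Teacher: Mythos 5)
Your proof is correct and follows essentially the same route as the paper, which likewise disposes of $S_{(4,46)}$ in one line by noting that it and $S_{(4,48)}$ have dual multiplications and appealing to Proposition~\ref{pro24071020}. You simply make explicit the reversal duality $\bu\approx\bv\mapsto\bu^{*}\approx\bv^{*}$ that the paper leaves implicit, and your verification that the two Cayley tables are transposes is accurate.
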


\section{Equational basis of $S_{(4, 12)}$}
In this section we provide a finite equational basis of $S_{(4, 12)}$ and show that $S_{(4, 12)}$ is finitely based.
One can easily find that $S_{(4, 12)}$ is isomorphic to a subdirect product of $S_4$ and $S_6$.
So $\mathsf{V}(S_{(4, 12)})=\mathsf{V}(S_4, S_6)$.
Notice that $S_6$ and $S_4$ have dual multiplications.
By Lemma \ref{lem24061501} we can deduce the following result,
which provides a solution of the equational problem for $S_6$.
\begin{lem}\label{lem24063001}
Let $\bu\approx \bu+\bq$ be a nontrivial ai-semiring identity such that
$\bu=\bu_1+\bu_2+\cdots+\bu_n$, where $\bu_i, \bq \in X^+$, $1\leq i \leq n$.
Then $\bu\approx \bu+\bq$ is satisfied by $S_6$ if and only if $\bu$ and $\bq$ satisfy the following conditions:
\begin{itemize}
\item[$(1)$] $c(\bq)\subseteq  c(\bu)$;

\item[$(2)$] $\ell(\bu_i)\geq 2$ for some $\bu_i \in \bu$;

\item[$(3)$] If $\bu$ satisfies the property {\rm (H)}:
\[
(\forall \bu_i, \bu_j \in \bu) ~ m(h(\bu_i), \bu_j)\leq 1; m(h(\bu_i), \bu_j)= 1\Rightarrow h(\bu_i)=h(\bu_j),
\]
then $\bu+\bq$ also satisfies the property {\rm (H)}, that is,
\[
(\forall \bu_i \in \bu) ~ m(h(\bu_i), \bq)\leq 1; m(h(\bu_i), \bq)= 1\Rightarrow h(\bu_i)=h(\bq);
\]
\[
m(h(\bq), \bq)= 1; m(h(\bq), \bu_i)\leq 1; m(h(\bq), \bu_i)= 1\Rightarrow h(\bq)=h(\bu_i).
\]
\end{itemize}
\end{lem}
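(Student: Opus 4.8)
The plan is to derive Lemma \ref{lem24063001} from Lemma \ref{lem24061501} by a duality (word-reversal) argument, exploiting the fact that the three-element semirings $S_6$ and $S_4$ have dual multiplications. For a word $\bw = y_1 y_2 \cdots y_k \in X^+$ I would write $\overleftarrow{\bw} = y_k \cdots y_2 y_1$ for its reversal, and extend this to ai-semiring terms by reversing each summand, so that $\overleftarrow{\bu_1 + \cdots + \bu_n} = \overleftarrow{\bu_1} + \cdots + \overleftarrow{\bu_n}$. Since the additive reducts of $S_6$ and $S_4$ coincide while their multiplications are mutually dual, there is a bijection $\theta \from S_6 \to S_4$ that respects addition and reverses multiplication, i.e.\ $\theta(ab) = \theta(b)\theta(a)$. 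For a word $\bw$ and a substitution $\varphi$ into $S_6$ one then computes $\theta(\varphi(\bw)) = \psi(\overleftarrow{\bw})$, where $\psi = \theta \circ \varphi$ is the induced substitution into $S_4$. Because $\theta$ is a bijection respecting $+$, it follows that $S_6$ satisfies $\bu \approx \bu + \bq$ if and only if $S_4$ satisfies the reversed identity $\overleftarrow{\bu} \approx \overleftarrow{\bu} + \overleftarrow{\bq}$.

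Next I would record how reversal acts on the combinatorial data used in Lemma \ref{lem24061501}. Reversal preserves the set of variables and all occurrence counts, so $c(\overleftarrow{\bw}) = c(\bw)$, $\ell(\overleftarrow{\bw}) = \ell(\bw)$ and $m(x, \overleftarrow{\bw}) = m(x, \bw)$ for every $x$; moreover it interchanges heads and tails, i.e.\ $t(\overleftarrow{\bw}) = h(\bw)$ and $h(\overleftarrow{\bw}) = t(\bw)$. From these observations the three conditions of Lemma \ref{lem24061501} applied to $\overleftarrow{\bu} \approx \overleftarrow{\bu} + \overleftarrow{\bq}$ translate verbatim into the three conditions of Lemma \ref{lem24063001} for $\bu \approx \bu + \bq$: condition $(1)$ $c(\overleftarrow{\bq}) \subseteq c(\overleftarrow{\bu})$ becomes $c(\bq) \subseteq c(\bu)$; condition $(2)$ becomes $\ell(\bu_i) \geq 2$ for some $\bu_i \in \bu$; and since $t(\overleftarrow{\bu_i}) = h(\bu_i)$ while multiplicities are preserved, the property {\rm (T)} for $\overleftarrow{\bu}$ is exactly the property {\rm (H)} for $\bu$, so condition $(3)$ for the reversed identity is precisely the assertion that $\bu$ satisfying {\rm (H)} forces $\bu + \bq$ to satisfy {\rm (H)}.

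Combining the two steps yields the claimed equivalence. The only point requiring genuine care is the first step: I would verify explicitly, from the Cayley tables of the order-three semirings $S_4$ and $S_6$, that their multiplications are indeed dual (equivalently, that $\bw \mapsto \overleftarrow{\bw}$ is underlain by an anti-isomorphism $\theta \from S_6 \to S_4$ fixing the additive structure), as this is what legitimises transporting identities of $S_6$ to identities of $S_4$. Once this is in place the remainder is a purely formal translation of Lemma \ref{lem24061501}, and I expect no further obstacle; in particular, the head/tail interchange under reversal makes property {\rm (T)} and property {\rm (H)} formally dual, so neither direction of the equivalence needs any verification beyond what Lemma \ref{lem24061501} already supplies.
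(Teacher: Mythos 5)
Your proposal is correct and is exactly the route the paper takes: the paper gives no separate proof of this lemma, merely noting that $S_6$ and $S_4$ have dual multiplications and that the statement therefore follows from Lemma \ref{lem24061501}. Your write-up simply makes explicit the word-reversal/anti-isomorphism bookkeeping (heads become tails, property {\rm (T)} becomes property {\rm (H)}) that the paper leaves implicit.
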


\begin{pro}
$\mathsf{V}(S_{(4, 12)})$ is the ai-semiring variety defined by the identities
\begin{align}
x^2 & \approx x^4; \label{f24070371}\\
x^2y^2 & \approx (xy)^2; \label{f24070370}\\
x^2y^2& \approx y^2x^2;    \label{f24071533}                \\
x^2& \approx x^2+x; \label{f24062923}\\
x^2y^2& \approx x^2y^2+x^2; \label{f24070372}\\
x+yx  & \approx y^2x;\label{f24062931}\\
x+xy  & \approx xy^2;\label{f24062932}\\
x+y^2& \approx x+y^2+y^2x^2; \label{f24062925}\\
xy^2+z& \approx xy^2+zy;              \label{f24070210}    \\
x^2y+z& \approx x^2y+xz; \\
x_1^2x_2+x_3x_4^2 & \approx x_1^2x_2^2x_3^2x_4^2; &\label{f24062927}\\
x_1x_2+y_1y_2& \approx x_1x_2+y_1y_2+x_1y_2; \label{f24062924}\\
x_1x_2+x_3x_2x_4& \approx x_1x_2+x_3x_2x_4+x_1; \label{f24062926}
\end{align}
\begin{align}
x_1x_2+x_3x_1x_4& \approx x_1x_2+x_3x_1x_4+x_2;\\
x_1x_2+y_1x_2y_2       & \approx x_1x_2+y_1x_2y_2+x_1x_2y_2^2;\label{f24070320}\\
x_1x_2+y_1x_1y_2       & \approx x_1x_2+y_1x_1y_2+y_1^2x_1x_2; \label{f24070321}\\
x_1x_2+x_3x_2x_4+x_5& \approx x_1x_2+x_3x_2x_4+x_5x_2; \label{f24070201}\\
x_1x_2+x_3x_1x_4+x_5& \approx x_1x_2+x_3x_1x_4+x_1x_5; \label{f240702001}\\
x_1x_2+y_1x_1y_2x_1y_3 & \approx x_1x_2+y_1x_1y_2x_1y_3+x_1^2x_2; \label{f24070301}\\
x_1x_2+y_1x_2y_2x_2y_3 & \approx x_1x_2+y_1x_2y_2x_2y_3+x_1x_2^2, \label{f24070302}
\end{align}
where $x_2$ and $x_3$ may be empty in $(\ref{f24062927})$, $x_1$ and $y_1$ may be empty in $(\ref{f24070320})$,
$x_2$ and $y_2$ may be empty in $(\ref{f24070321})$,
$x_2$, $y_1$, $y_2$ and $y_3$ may be empty in $(\ref{f24070301})$,
and $x_1$, $y_1$, $y_2$ and $y_3$ may be empty in $(\ref{f24070302})$.
\end{pro}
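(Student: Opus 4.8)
The plan is to exploit the subdirect decomposition $\mathsf{V}(S_{(4,12)})=\mathsf{V}(S_4,S_6)$ recorded just above, so that an ai-semiring identity holds in $S_{(4,12)}$ exactly when it holds in both $S_4$ and $S_6$, and to read off structural information about any such identity from Lemma \ref{lem24061501} (for $S_4$) together with its dual Lemma \ref{lem24063001} (for $S_6$). For the easy direction I would avoid evaluating the multiplication tables by hand and instead check each listed identity against the two lemmas, confirming it is satisfied by $S_4$ and by $S_6$; this is routine. For the converse, take a nontrivial identity $\bu\approx\bu+\bq$ of $S_{(4,12)}$ with $\bu=\bu_1+\cdots+\bu_n$ and $\bu_i,\bq\in X^+$. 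Applying both lemmas yields $c(\bq)\subseteq c(\bu)$, $\ell(\bu_i)\geq 2$ for some $\bu_i$, and the two preservation statements: if $\bu$ has property {\rm (T)} then so does $\bu+\bq$, and likewise for {\rm (H)}.

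The derivation then splits according to whether $\bu$ satisfies {\rm (T)} and whether it satisfies {\rm (H)}, giving a $2\times2$ case structure. When {\rm (T)} fails---some word carries an interior occurrence of a tail $t(\bu_{i_1})$, or two words share a tail variable off the tail---I would follow the template of Propositions \ref{pro24071030} and \ref{pro24071020}: splice with (\ref{f24062924}), square the result via (\ref{f24070371})--(\ref{f24071533}), absorb it into $\bq^2$ by (\ref{f24062925}) and (\ref{f24070372}), and recover $\bq$ through (\ref{f24062923}). The failure of {\rm (H)} is handled dually.

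The interesting case is when $\bu$ satisfies \emph{both} {\rm (T)} and {\rm (H)}. Here the two lemmas pin $h(\bq)=h(\bu_i)$ and $t(\bq)=t(\bu_j)$ with $m(h(\bq),\bq)=m(t(\bq),\bq)=1$, so that $\bq=h(\bq)\,\bw\,t(\bq)$ has prescribed endpoints and a flexible interior $\bw$ with $c(\bw)\subseteq c(\bu)$. The task is to assemble exactly this word from $\bu_i$, $\bu_j$, and ambient squared material, using the head/tail-transport identities (\ref{f24062926})--(\ref{f240702001}) to move the correct head and tail into place, and the doubling identities (\ref{f24070301}) and (\ref{f24070302}) to treat the subcase in which the designated head or tail would otherwise be forced to occur twice. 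One must also isolate the degenerate subcase $\ell(\bq)=1$, where $\bq$ is a single variable that is simultaneously a head and a tail and is extracted from a square by (\ref{f24062923}).

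The main obstacle I expect is precisely this combined case: simultaneously respecting the head constraint coming from $S_6$ and the tail constraint coming from $S_4$. Splicing is straightforward when $h(\bq)\neq t(\bq)$ and neither endpoint must reappear, but the delicate point is the bookkeeping of endpoint multiplicities, since {\rm (H)} and {\rm (T)} demand single occurrences in $\bq$ while the words of $\bu$ that supply the head and tail may force multiplicity; reconciling this so that the rewritten term lands on $\bq$ exactly---rather than on $\bq$ up to a spurious square---is exactly what (\ref{f24070301}) and (\ref{f24070302}) are engineered to accomplish, and verifying that they suffice in every interior configuration is the crux of the argument.
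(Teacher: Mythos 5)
Your overall strategy coincides with the paper's: both directions rest on the decomposition $\mathsf{V}(S_{(4,12)})=\mathsf{V}(S_4,S_6)$, on Lemmas \ref{lem24061501} and \ref{lem24063001}, and on a four-way case split according to whether $\bu$ satisfies {\rm (T)} and {\rm (H)}. Your descriptions of the both-hold and both-fail cases are broadly consistent with the paper's Cases 1 and 4, except that you misallocate the doubling identities (\ref{f24070301}) and (\ref{f24070302}): they are used in the case where \emph{both} properties fail, not in the case where both hold. When {\rm (H)} and {\rm (T)} both hold, the preservation clauses force $m(h(\bq),\bq)=m(t(\bq),\bq)=1$, so no endpoint is ever ``forced to occur twice''; that case is handled purely by an induction with the splicing identity (\ref{f24062924}).

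The genuine gap is in the two mixed cases. You propose that whenever {\rm (T)} fails one can imitate Propositions \ref{pro24071030} and \ref{pro24071020}: splice, square everything down to a term containing $\bq^2$ as a factor, and then extract $\bq$ via (\ref{f24070372}) and (\ref{f24062923}). That template is sound only when {\rm (H)} fails as well. If $\bu$ satisfies {\rm (H)} while {\rm (T)} fails, then by Lemma \ref{lem24063001} every identity $\bu\approx\bu+\bq'$ valid in $S_6$ must have $\bu+\bq'$ satisfying {\rm (H)}, in particular $m(h(\bq'),\bq')=1$; but a term of the form $\bq^2\bp^2$ has its head letter $h(\bq)$ occurring at least twice, so $\bu\approx\bu+\bq^2\bp^2$ is false in $S_6$, hence false in $S_{(4,12)}$, hence cannot appear as an intermediate identity in any derivation from the (sound) proposed basis. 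The dual obstruction blocks the squaring template when {\rm (H)} fails but {\rm (T)} holds. These two situations are precisely the paper's Cases 2 and 3, which occupy the bulk of its proof and require a different argument: a letter-by-letter induction that assembles $h(\bq)x_1\cdots x_m$ from the head end while keeping the multiplicity of $h(\bq)$ equal to one, using the transport identities (\ref{f24070201}), (\ref{f240702001}), (\ref{f24062926}), (\ref{f24062932}) and (\ref{f24070210}) to lengthen the words of $\bu$ before each splice. Your proposal contains no substitute for this, so the mixed cases remain unproved.
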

\begin{proof}
It is easy to verify that $S_{(4, 12)}$ satisfies the identities
(\ref{f24070371})--(\ref{f24070302}).
In the remainder it is enough to show that every ai-semiring identity of $S_{(4, 12)}$
is derivable from (\ref{f24070371})--(\ref{f24070302}) and the identities defining $\mathbf{AI}$.
Let $\bu \approx \bu+\bq$ be such an identity, where $\bu=\bu_1+\bu_2+\cdots+\bu_n$ and $\bu_i, \bq\in X^+$, $1 \leq i \leq n$.
Then both $S_4$ and $S_6$ satisfy $\bu \approx \bu+\bq$.
By Lemmas \ref{lem24061501} and \ref{lem24063001} we need to consider the following cases.

\textbf{Case 1.} $\bu$ satisfies the properties {\rm (H)} and {\rm (T)}.
Then $\bu+\bq$ satisfies the properties {\rm (H)} and {\rm (T)}.
Since $c(\bq) \subseteq c(\bu)$, it follows that
there exist $\bu_i,  \bu_j \in \bu$ such that $h(\bu_i)=h(\bq)$ and $t(\bu_j)=t(\bq)$.
This implies that $\bu_i=h(\bq)s(\bu_i)$ and $\bu_j=p(\bu_j)t(\bq)$.

\textbf{Subcase 1.1.} $\ell(\bq)=1$. Then $h(\bq)=t(\bq)=\bq$.
If $\ell(\bu_i)=1$ or $\ell(\bu_j)=1$,
then it is easy to see that $\bu\approx \bu+\bq$ is trivial.
Now assume that $\ell(\bu_i)\geq 2$ and $\ell(\bu_j)\geq 2$.
Then both $s(\bu_i)$ and $p(\bu_j)$ are nonempty.
Furthermore, we have
\begin{align*}
\bu
&\approx \bu+\bu_i+\bu_j&&\\
&\approx \bu+h(\bq)s(\bu_i)+p(\bu_j)t(\bq)&&       \\
&\approx \bu+\bq s(\bu_i)+p(\bu_j)\bq          \\
&\approx \bu+\bq s(\bu_i)+p(\bu_j)\bq+\bq^2 &&   (\text{by}~(\ref{f24062924}))\\
&\approx \bu+\bq s(\bu_i)+p(\bu_j)\bq+\bq^2+\bq.  &&  (\text{by}~(\ref{f24062923}))
\end{align*}
This derives the identity $\bu \approx \bu+\bq$.

\textbf{Subcase 1.2.} $\ell(\bq)\geq 2$.
Let $\bq=h(\bq) x_1 \cdots x_m t(\bq)$,
where $m\geq 0$, $h(\bq) \neq t(\bq)$, $x_i\neq h(\bq)$ and $x_i\neq t(\bq)$ for all $1 \leq i \leq n$.
Suppose by way of contradiction that $\ell(\bu_i)=1$. Then $h(\bu_i)=t(\bu_i)=\bu_i$.
Since $h(\bu_i)=h(\bq)$, it follows that $t(\bu_i)=h(\bq)$.
This implies that $t(\bu_i)\in c(\bq)$ and so $t(\bu_i)=t(\bq)$.
Thus $h(\bq)=t(\bq)$, a contradiction. So $\ell(\bu_i)\geq 2$.
Similarly, one can show that $\ell(\bu_j)\geq 2$.
Let $1\leq i \leq m$. Since $c(\bq) \subseteq c(\bu)$, there exists $\bu_{r_i} \in \bu$ such that
$x_i \in p(\bu_{r_i})\cap s(\bu_{r_i})$ and so $\ell(\bu_{r_i})\geq 3$.
This implies that $\bu_{r_i}=\bu_{r_i}'x_i\bu_{r_i}''$
for some nonempty words $\bu_{r_i}'$ and $\bu_{r_i}''$.

Next, we shall show by induction on $k$ that the identities
(\ref{f24070371})--(\ref{f24070302}) can derive
$\bu \approx \bu+h(\bq)x_1x_2\cdots x_k t(\bq)$ for all $0\leq k\leq m$.
Indeed, if $k=0$, then
\begin{align*}
\bu
&\approx \bu+\bu_i+\bu_j&&\\
&\approx \bu+h(\bq)s(\bu_i)+p(\bu_j)t(\bq)&&       \\
&\approx \bu+h(\bq)s(\bu_i)+p(\bu_j)t(\bq)+h(\bq)t(\bq). &&   (\text{by}~(\ref{f24062924}))
\end{align*}
This implies the identity $\bu\approx \bu+h(\bq)t(\bq)$.
If $k=1$, then

\begin{align*}
\bu
&\approx \bu+\bu_i+\bu_{r_1}   &&         \\
&\approx \bu+h(\bu_i)s(\bu_i)+\bu_{r_1}'x_1\bu_{r_1}''                    &&       \\
&\approx \bu+h(\bq)s(\bu_i)+\bu_{r_1}'x_1\bu_{r_1}''                    &&       \\
&\approx \bu+h(\bq)s(\bu_i)+\bu_{r_1}'x_1\bu_{r_1}''+h(\bq)x_1\bu_{r_1}''  && (\text{by}~(\ref{f24062924})) \\
&\approx \bu+h(\bq)s(\bu_i)+\bu_{r_1}'x_1\bu_{r_1}''+h(\bq)x_1\bu_{r_1}''+\bu_j\\
&\approx \bu+h(\bq)s(\bu_i)+\bu_{r_1}'x_1\bu_{r_1}''+h(\bq)x_1\bu_{r_1}''+p(\bu_j)t(\bu_j)\\
&\approx \bu+h(\bq)s(\bu_i)+\bu_{r_1}'x_1\bu_{r_1}''+h(\bq)x_1\bu_{r_1}''+p(\bu_j)t(\bq)\\
&\approx \bu+h(\bq)s(\bu_i)+\bu_{r_1}'x_1\bu_{r_1}''+h(\bq)x_1\bu_{r_1}''+p(\bu_j)t(\bq)+h(\bq)x_1t(\bq). && (\text{by}~(\ref{f24062924}))
\end{align*}
This implies the identity $\bu\approx \bu+h(\bq)x_1t(\bq)$.
Let $2\leq k\leq m$. Suppose that the identities
(\ref{f24070371})--(\ref{f24070302}) can derive
$\bu \approx \bu+h(\bq)x_1x_2\cdots x_{k-1} t(\bq)$.
Then
\begin{align*}
\bu
&\approx \bu+h(\bq)x_1x_2\cdots x_{k-1}t(\bq)+\bu_{r_k}   &&         \\
&\approx \bu+h(\bq)x_1x_2\cdots x_{k-1}t(\bq)+\bu_{r_k}'x_k\bu_{r_k}''   &&        \\
&\approx \bu+h(\bq)x_1x_2\cdots x_{k-1}t(\bq)+\bu_{r_k}'x_k\bu_{r_k}''+h(\bq)x_1x_2\cdots x_{k-1}x_k\bu_{r_k}''. &&(\text{by}~(\ref{f24062924}))
\end{align*}
This implies the identity
\begin{equation*}
\bu \approx \bu+h(\bq)x_1x_2\cdots x_{k-1}x_k\bu_{r_k}''.
\end{equation*}
Furthermore, we have
\begin{align*}
\bu
&\approx\bu+h(\bq)x_1x_2\cdots x_{k-1}x_k\bu_{r_k}''  \\
&\approx \bu+h(\bq)x_1x_2\cdots x_{k-1}x_k\bu_{r_k}''+\bu_j \\
&\approx \bu+h(\bq)x_1x_2\cdots x_{k-1}x_k\bu_{r_k}''+p(\bu_j)t(\bq)\\
&\approx \bu+h(\bq)x_1x_2\cdots x_{k-1}x_k\bu_{r_k}''+p(\bu_j)t(\bq)+h(\bq)x_1x_2\cdots x_{k-1}x_kt(\bq).&& (\text{by}~(\ref{f24062924}))
\end{align*}
This proves the identity
\[
\bu \approx \bu+h(\bq)x_1x_2\cdots x_{k-1}x_kt(\bq).
\]
Take $k=m$. We obtain the identity
\[
\bu\approx \bu+h(\bq)x_1x_2\cdots x_{k-1}x_mt(\bq)\approx \bu+\bq.
\]

\textbf{Case 2.} $\bu$ satisfies the property {\rm (H)}, but does not satisfy the property {\rm (T)}.
Then $\bu+\bq$ satisfies the property {\rm (H)}.
Let $\bq=h(\bq) x_1 \cdots x_m$, where $m \geq 0$.
Then $h(\bq) \neq x_i$ for all $1 \leq i \leq m$.
Since $c(\bq)\subseteq c(\bu)$,
it follows that there exists $\bu_i\in \bu$ such that
$h(\bu_i)=h(\bq)$ and so $\bu_i=h(\bq)s(\bu_i)$, where $s(\bu_i)$ may be empty.
For any $1 \leq i \leq m$,
there exists $\bu_{r_i} \in \bu$ such that
$x_i \in s(\bu_{r_i})$ and so $\ell(\bu_{r_i})\geq 2$.
This implies that $\bu_{r_i}=\bu_{r_i}'x_i\bu_{r_i}''$
for some nonempty word $\bu_{r_i}'$ and some word $\bu_{r_i}''$, which may be empty.
Since $\bu$ does not satisfy the property {\rm (T)},
we need to consider the following two subcases.

\textbf{Subcase 2.1.} $m(t(\bu_{i_1}), \bu_{i_2})\geq 2$ for some $\bu_{i_1}, \bu_{i_2} \in \bu$.
Suppose that $\ell(\bu_{i_1})=1$. Then $h(\bu_{i_1})=t(\bu_{i_1})$ and so $m(h(\bu_{i_1}), \bu_{i_2})\geq 2$.
This contradicts the fact that $\bu$ satisfies {\rm (H)}.
Thus $\ell(\bu_{i_1})\geq 2$ and so $p(\bu_{i_1})$ is nonempty.
Moreover, we can deduce that $h(\bu_{i_2})\neq t(\bu_{i_1})$ and so $\bu_{i_2}=\bu_{i_2}'t(\bu_{i_1})\bu_{i_2}''$
for some nonempty words $\bu_{i_2}'$ and $\bu_{i_2}''$.
Now we have

\begin{align*}
\bu
&\approx \bu+\bu_{i_1}+\bu_{i_2}+\bu_i  &&         \\
&\approx \bu+p(\bu_{i_1})t(\bu_{i_1})+\bu_{i_2}'t(\bu_{i_1})\bu_{i_2}''+h(\bq)s(\bu_i)&&         \\
&\approx \bu+p(\bu_{i_1})t(\bu_{i_1})+\bu_{i_2}'t(\bu_{i_1})\bu_{i_2}''+h(\bq)s(\bu_i)t(\bu_{i_1}). &&(\text{by}~(\ref{f24070201}))
\end{align*}
So we may assume that $s(\bu_i)$ is nonempty.
Similarly, for any $1\leq i \leq m$, we have
\begin{align*}
\bu
&\approx \bu+\bu_{i_1}+\bu_{i_2}+\bu_{r_i}  &&         \\
&\approx \bu+p(\bu_{i_1})t(\bu_{i_1})+\bu_{i_2}'t(\bu_{i_1})\bu_{i_2}''+\bu_{r_i}'x_i\bu_{r_i}''&&         \\
&\approx \bu+p(\bu_{i_1})t(\bu_{i_1})+\bu_{i_2}'t(\bu_{i_1})\bu_{i_2}''+\bu_{r_i}'x_i\bu_{r_i}''t(\bu_{i_1}). &&(\text{by}~(\ref{f24070201}))
\end{align*}
From this observation we may assume that $\bu_{r_i}''$ is nonempty for all $1\leq i \leq m$.
One can prove by induction on $k$ that
the identities (\ref{f24070371})--(\ref{f24070302}) can derive
$\bu \approx \bu+h(\bq)x_1x_2\cdots x_{k}\bu_{r_k}''$ for all $1\leq k\leq m$.
This process is similar to Subcase 1.2.
Put $k=m$. We obtain the identity
\begin{equation*}
\bu \approx \bu+h(\bq)x_1x_2\cdots x_{m}\bu_{r_m}''.
\end{equation*}
Now we have
\begin{align*}
\bu
&\approx \bu+h(\bq)x_1x_2\cdots x_{m}\bu_{r_m}''+\bu_{i_1}   &&         \\
&\approx \bu+h(\bq)x_1x_2\cdots x_{m}\bu_{r_m}''+p(\bu_{i_1})t(\bu_{i_1})                    &&       \\
&\approx \bu+h(\bq)x_1x_2\cdots x_{m}\bu_{r_m}''+p(\bu_{i_1})t(\bu_{i_1})+ h(\bq)x_1x_2\cdots x_{m}t(\bu_{i_1}).   &&  (\text{by}~(\ref{f24062924}))
\end{align*}
This implies the identity
\[
\bu \approx \bu+h(\bq)x_1x_2\cdots x_{m}t(\bu_{i_1}).
\]
Furthermore, we have
\begin{align*}
\bu
&\approx \bu+h(\bq)x_1x_2\cdots x_{m}t(\bu_{i_1})  &&  \\
&\approx \bu+h(\bq)x_1x_2\cdots x_{m}t(\bu_{i_1})+\bu_{i_2}\\
&\approx \bu+h(\bq)x_1x_2\cdots x_{m}t(\bu_{i_1})+\bu_{i_2}'t(\bu_{i_1})\bu_{i_2}''\\
&\approx \bu+h(\bq)x_1x_2\cdots x_{m}t(\bu_{i_1})+
\bu_{i_2}'t(\bu_{i_1})\bu_{i_2}''+h(\bq)x_1x_2\cdots x_{m}. &&(\text{by}~(\ref{f24062926}))
\end{align*}
This derives
\[
\bu\approx \bu+h(\bq)x_1x_2\cdots x_{m}\approx \bu+\bq.
\]

\textbf{Subcase 2.2.}
$m(t(\bu_{i_1}), \bu_{i_2}) = 1$ and $t(\bu_{i_1})\neq t(\bu_{i_2})$ for some $\bu_{i_1}, \bu_{i_2} \in \bu$.
Then $t(\bu_{i_1})\in c(p(\bu_{i_2}))$.
If $\ell(\bu_{i_1})\geq 2$, then $h(\bu_{i_1})\neq t(\bu_{i_1})$.
This implies that $h(\bu_{i_2})\neq t(\bu_{i_1})$
and so $\bu_{i_2}=\bu_{i_2}'t(\bu_{i_1})\bu_{i_2}''$
for some nonempty words $\bu_{i_2}'$ and $\bu_{i_2}''$.
The remaining process is similar to Subcase 2.1.
Now assume that $\ell(\bu_{i_1})=1$.
Then $t(\bu_{i_1})=h(\bu_{i_1})=\bu_{i_1}$ and so $h(\bu_{i_2})=\bu_{i_1}$.
This implies that $\bu_{i_2}=\bu_{i_1}s(\bu_{i_2})$, where $s(\bu_{i_2})$ is nonempty.
By the identity (\ref{f24062932}) we deduce
\begin{equation*}\label{f24070211}
\bu\approx\bu+\bu_{i_1}+\bu_{i_2}\approx \bu+\bu_{i_1}+\bu_{i_1}s(\bu_{i_2})\approx \bu+\bu_{i_1}s(\bu_{i_2})s(\bu_{i_2})
\end{equation*}
and so we obtain
\begin{equation}\label{f24071522}
\bu\approx\bu+\bu_{i_1}s(\bu_{i_2})s(\bu_{i_2}).
\end{equation}
Furthermore, we have

\begin{align*}
\bu
&\approx \bu+\bu_{i_1}s(\bu_{i_2})s(\bu_{i_2})&&(\text{by}~(\ref{f24071522}))\\
&\approx \bu+\bu_{i_1}s(\bu_{i_2})s(\bu_{i_2})+\bu_i &&         \\
&\approx \bu+\bu_{i_1}s(\bu_{i_2})s(\bu_{i_2})+h(\bq)s(\bu_i) &&         \\
&\approx \bu+\bu_{i_1}s(\bu_{i_2})s(\bu_{i_2})+h(\bq)s(\bu_i)s(\bu_{i_2}). &&(\text{by}~(\ref{f24070210}))
\end{align*}
So we may assume that $s(\bu_i)$ is nonempty.
Similarly, for any $1\leq i \leq m$, we have
\begin{align*}
\bu
&\approx \bu+\bu_{i_1}s(\bu_{i_2})s(\bu_{i_2})&&(\text{by}~(\ref{f24071522}))\\
&\approx \bu+\bu_{i_1}s(\bu_{i_2})s(\bu_{i_2})+\bu_{r_i} &&         \\
&\approx \bu+\bu_{i_1}s(\bu_{i_2})s(\bu_{i_2})+\bu_{r_i}'x_i\bu_{r_i}'' &&         \\
&\approx \bu+\bu_{i_1}s(\bu_{i_2})s(\bu_{i_2})+\bu_{r_i}'x_i\bu_{r_i}''s(\bu_{i_2}). &&(\text{by}~(\ref{f24070210}))
\end{align*}
From this result we may assume that $\bu_{r_i}''$ is nonempty for all $1\leq i \leq m$.
One can prove by induction on $k$ that
the identities (\ref{f24070371})--(\ref{f24070302}) can derive
$\bu \approx \bu+h(\bq)x_1x_2\cdots x_{k}\bu_{r_k}''$ for all $1\leq k\leq m$.
This process is similar to Subcase 1.2.
Put $k=m$. We obtain
\begin{equation*}
\bu \approx \bu+h(\bq)x_1x_2\cdots x_{m}\bu_{r_m}''.
\end{equation*}
Furthermore, we have
\begin{align*}
\bu
&\approx \bu+h(\bq)x_1x_2\cdots x_{m}\bu_{r_m}''+\bu_{i_2}   &&         \\
&\approx \bu+h(\bq)x_1x_2\cdots x_{m}\bu_{r_m}''+\bu_{i_1}s(\bu_{i_2})   &&         \\
&\approx \bu+h(\bq)x_1x_2\cdots x_{m}\bu_{r_m}''+\bu_{i_1}s(\bu_{i_2})+h(\bq)x_1x_2\cdots x_{m}s(\bu_{i_2})
&& (\text{by}~(\ref{f24062924}))
\end{align*}
This proves the identity
\begin{equation}\label{f24070350}
\bu\approx \bu+h(\bq)x_1x_2\cdots x_{m}s(\bu_{i_2}).
\end{equation}
On the other hand, we have
\[
\bu\approx \bu+\bu_{i_2}+\bu_{i_1}\approx\bu+\bu_{i_1}s(\bu_{i_2})+\bu_{i_1}\stackrel{(\ref{f24062932})}\approx\bu+\bu_{i_1}s(\bu_{i_2}))s(\bu_{i_2})
\]
and so the identity
\begin{equation}\label{f24070351}
\bu\approx \bu+\bu_{i_1}s(\bu_{i_2}))s(\bu_{i_2})
\end{equation}
is derived. Now we have
\begin{align*}
\bu
&\approx \bu+h(\bq)x_1x_2\cdots x_{m}s(\bu_{i_2})+\bu_{i_1}s(\bu_{i_2})s(\bu_{i_2})&&(\text{by}~(\ref{f24070350}), (\ref{f24070351}))\\
&\approx \bu+h(\bq)x_1x_2\cdots x_{m}s(\bu_{i_2})+\bu_{i_1}s(\bu_{i_2})s(\bu_{i_2})+h(\bq)x_1x_2\cdots x_{m}.&&(\text{by}~(\ref{f24062926}))
\end{align*}
This derives the identity
\[
\bu\approx \bu+h(\bq)x_1x_2\cdots x_{m}.
\]
So we obtain the identity
\[
\bu\approx \bu+\bq.
\]

\textbf{Case 3.} $\bu$ satisfies the property {\rm (T)}, but does not satisfy the property {\rm (H)}.
This is similar to Case 2.

\textbf{Case 4.} $\bu$ does not satisfy the property {\rm (H)} or the property {\rm (T)}.

\textbf{Subcase 4.1.} $m(h(\bu_{i_1}), \bu_{i_2})\geq 2$ and $m(t(\bu_{j_1}), \bu_{j_2})\geq 2$
for some $\bu_{i_1}, \bu_{i_2}, \bu_{j_1}$, $\bu_{j_2} \in \bu$.
Then $\bu_{i_1}=h(\bu_{i_1})s(\bu_{i_1})$, $\bu_{i_2}=\bp_1h(\bu_{i_1})\bp_2h(\bu_{i_1})\bp_3$,
$\bu_{j_1}=p(\bu_{j_1})t(\bu_{j_1})$ and $\bu_{j_2}=\bq_1t(\bu_{j_1})\bq_2t(\bu_{j_1})\bq_3$,
where $s(\bu_{i_1})$, $\bp_1$, $\bp_2$, $\bp_3$, $p(\bu_{j_1})$, $\bq_1$, $\bq_2$, $\bq_3$
may be empty.
Furthermore, we have
\begin{align*}
\bu
&\approx \bu+\bu_{i_1}+\bu_{i_2}                \\
&\approx \bu+h(\bu_{i_1})s(\bu_{i_1})+\bp_1h(\bu_{i_1})\bp_2h(\bu_{i_1})\bp_3 \\
&\approx \bu+h(\bu_{i_1})s(\bu_{i_1})+\bp_1h(\bu_{i_1})\bp_2h(\bu_{i_1})\bp_3+h(\bu_{i_1})h(\bu_{i_1})s(\bu_{i_1}). &&(\text{by}~(\ref{f24070301}))
\end{align*}
This derives the identity
\begin{equation}\label{f24070305}
\bu \approx \bu+h(\bu_{i_1})h(\bu_{i_1})s(\bu_{i_1}).
\end{equation}
Similarly, we have
\begin{align*}
\bu
&\approx \bu+\bu_{j_1}+\bu_{j_2}                \\
&\approx \bu+p(\bu_{j_1})t(\bu_{j_1})+\bq_1t(\bu_{j_1})\bq_2t(\bu_{j_1})\bq_3 \\
&\approx \bu+p(\bu_{j_1})t(\bu_{j_1})+\bq_1t(\bu_{j_1})\bq_2t(\bu_{j_1})\bq_3+p(\bu_{j_1})t(\bu_{j_1})t(\bu_{j_1}). &&(\text{by}~(\ref{f24070302}))
\end{align*}
So we obtain the identity
\begin{equation}\label{f24070306}
\bu \approx \bu+p(\bu_{j_1})t(\bu_{j_1})t(\bu_{j_1}).
\end{equation}
Now we have
\begin{align*}
\bu
&\approx \bu+h(\bu_{i_1})h(\bu_{i_1})s(\bu_{i_1})+p(\bu_{j_1})t(\bu_{j_1})t(\bu_{j_1})
&&(\text{by}~(\ref{f24070305}), (\ref{f24070306}))              \\
&\approx \bu+h(\bu_{i_1})h(\bu_{i_1})s(\bu_{i_1})s(\bu_{i_1})p(\bu_{j_1})p(\bu_{j_1})t(\bu_{j_1})t(\bu_{j_1}) &&(\text{by}~(\ref{f24062927}))\\
&\approx \bu+x_1^2x_2^2\cdots x_m^2 &&(\text{by}~(\ref{f24070370}),(\ref{f24062925}))\\
&\approx \bu+\bq^2\bp^2 &&(\text{by}~(\ref{f24070371}),(\ref{f24070370}), (\ref{f24071533}))\\
&\approx \bu+\bq^2\bp^2+\bq^2&&(\text{by}~(\ref{f24070372})) \\
&\approx \bu+\bq^2\bp^2+\bq^2+\bq, &&(\text{by}~(\ref{f24062923}))
\end{align*}
where $c(\bu)=\{x_1, x_2, \ldots, x_m\}$.
This derives the identity $\bu\approx \bu+\bq$.

\textbf{Subcase 4.2.}
$m(h(\bu_{i_1}), \bu_{i_2})\geq 2$, $m(t(\bu_{j_1}), \bu_{j_2}) = 1$ and $t(\bu_{j_1})\neq t(\bu_{j_2})$
for some $\bu_{i_1}, \bu_{i_2}, \bu_{j_1}, \bu_{j_2} \in \bu$.
Then
$\bu_{j_1}=p(\bu_{j_1})t(\bu_{j_1})$ and $\bu_{j_2}=\bq_1t(\bu_{j_1})\bq_2$
where $\bq_2$ is nonempty, $p(\bu_{j_1})$ and $\bq_1$ may be empty.
We have
\begin{align*}
\bu
&\approx \bu+\bu_{j_1}+\bu_{j_2}                \\
&\approx \bu+p(\bu_{j_1})t(\bu_{j_1})+\bq_1t(\bu_{j_1})\bq_2 \\
&\approx \bu+p(\bu_{j_1})t(\bu_{j_1})+\bq_1t(\bu_{j_1})\bq_2+p(\bu_{j_1})t(\bu_{j_1})\bq_2^2. &&(\text{by}~(\ref{f24070320}))
\end{align*}
This derives the identity
\[
\bu \approx \bu+p(\bu_{j_1})t(\bu_{j_1})\bq_2^2,
\]
where $m(t(\bu_{j_2}), p(\bu_{j_1})t(\bu_{j_1})\bq_2^2)\geq 2$.
The remaining is similar to Subcase 4.1.

\textbf{Subcase 4.3.}
$m(h(\bu_{i_1}), \bu_{i_2}) = 1$, $h(\bu_{i_1})\neq h(\bu_{i_2})$ and $m(t(\bu_{j_1}), \bu_{j_2})\geq 2$
for some $\bu_{i_1}, \bu_{i_2}, \bu_{j_1}, \bu_{j_2} \in \bu$.
The process is similar to Subcases 4.1 and 4.2.

\textbf{Subcase 4.4.}
$m(h(\bu_{i_1}), \bu_{i_2}) = 1$, $h(\bu_{i_1})\neq h(\bu_{i_2})$, $m(t(\bu_{j_1}), \bu_{j_2}) = 1$ and $t(\bu_{j_1})\neq t(\bu_{j_2})$
for some $\bu_{i_1}, \bu_{i_2}, \bu_{j_1}, \bu_{j_2} \in \bu$.
The process is similar to Subcases 4.1 and 4.2.
\end{proof}

\section{Conclusion}
We have answered the finite basis problem for $4$-element ai-semirings whose additive reducts are semilattices of height $1$.
We shall proceed to study the corresponding problem for remaining $4$-element ai-semirings.
From the whole paper one can further conclude that flat semirings are very important in the theory of ai-semiring varieties.
Moreover, it is of interest to study the relationship among the finite basis problem for $S_{ne}$, $S_{ie}$ and $S$.

\qquad

\noindent
\textbf{Acknowledgements}
The authors thank their team members Zidong Gao, Zexi Liu, Qizheng Sun, Mengyu Yuan and Mengya Yue for discussions contributed to this paper.
The authors are particularly grateful to Professor Mikhail Volkov for his helpful suggestions.
The authors also thank the anonymous referee for her/his valuable comments and suggestions
which lead to the final version of this paper.

\bibliographystyle{amsplain}


\end{document}